\newcommand\tenq[2][1]{%
\def\useanchorwidth{T}%
\ifnum#1>1%
\stackunder[0pt]{\tenq[\numexpr#1-1\relax]{#2}}{\!\scriptscriptstyle\thicksim}%
\else%
\stackunder[1pt]{#2}{\!\scriptstyle\thicksim}%
\fi%
}
\DeclareRobustCommand\widecheck[1]{{\mathpalette\@widecheck{#1}}}
\def\@widecheck#1#2{%
    \setbox\z@\hbox{\m@th$#1#2$}%
    \setbox\tw@\hbox{\m@th$#1%
       \widehat{%
          \vrule\@width\z@\@height\ht\z@
          \vrule\@height\z@\@width\wd\z@}$}%
    \dp\tw@-\ht\z@
    \@tempdima\ht\z@ \advance\@tempdima2\ht\tw@ \divide\@tempdima\thr@@
    \setbox\tw@\hbox{%
       \raise\@tempdima\hbox{\scalebox{1}[-1]{\lower\@tempdima\box
\tw@}}}%
    {\ooalign{\box\tw@ \cr \box\z@}}}
\DeclareMathOperator*{\argmax}{arg\,max}
\def\given{\,|\,}
\def\Biggiven{\,\Big{|}\,}
\def\tr{\mathop{\text{tr}}\kern.2ex}
\def\P{{\mathrm P}}
\def\E{{\mathrm E}}
\def\R{{\mathbbm R}}
\def\d{{\mathrm d}}
\newcommand{\sgn}{\mathrm{sgn}}
\renewcommand{\Pr}{\mathrm{P}}
\newcommand\yestag{\addtocounter{equation}{1}\tag{\theequation}}
\newcolumntype{L}[1]{>{\raggedright\let\newline\\\arraybackslash\hspace{0pt}}m{#1}}
\newcolumntype{C}[1]{>{  \centering\let\newline\\\arraybackslash\hspace{0pt}}m{#1}}
\newcolumntype{R}[1]{>{ \raggedleft\let\newline\\\arraybackslash\hspace{0pt}}m{#1}}
\newcolumntype{d}[1]{D{.}{.}{#1}}
\newcolumntype{H}{>{\setbox0=\hbox\bgroup}c<{\egroup}@{}}
\newcolumntype{Z}{>{\setbox0=\hbox\bgroup}c<{\egroup}@{\hspace*{-\tabcolsep}}}
\newcolumntype{b}{X}
\newcolumntype{s}{>{\hsize=.5\hsize}X}
\numberwithin{equation}{section}
\theoremstyle{plain}
\newtheorem{theorem}{Theorem}[section]
\newtheorem{lemma}[theorem]{Lemma}
\newtheorem{proposition}[theorem]{Proposition}
\theoremstyle{definition}
\newtheorem{remark}[theorem]{Remark}
\begin{document}

\setlength{\abovedisplayskip}{10pt}
\setlength{\belowdisplayskip}{10pt}
\setlength{\abovedisplayshortskip}{10pt}
\setlength{\belowdisplayshortskip}{10pt}
\hypersetup{colorlinks,breaklinks,urlcolor=blue,linkcolor=blue}

\title{\LARGE On universal inference in Gaussian mixture models}

\author{
Hongjian Shi\thanks{TUM School of Computation, Information and Technology, Technical University of Munich; e-mail: \url{hongjian.shi@tum.de}}~~~and~
Mathias Drton\thanks{TUM School of Computation, Information and Technology and Munich Data Science Institute, Technical University of Munich; Munich Center for Machine Learning; e-mail: \url{mathias.drton@tum.de}}.
}

\date{}

\maketitle

\begin{abstract} 
A recent line of work provides new statistical tools based on game-theory and achieves safe anytime-valid inference without assuming regularity conditions. In particular, the framework of universal inference proposed by \citet{MR4242731} offers new solutions to testing problems by modifying the likelihood ratio test in a data-splitting scheme. 
In this paper, we study the performance of the resulting split likelihood ratio test under Gaussian mixture models, which are canonical examples for models in which classical regularity conditions fail to hold. We establish that under the null hypothesis, the split likelihood ratio statistic is asymptotically normal with increasing mean and variance. Contradicting the usual belief that the flexibility of universal inference comes at the price of a significant loss of power, we prove that universal inference surprisingly achieves the same detection rate $(n^{-1}\log\log n)^{1/2}$ as the classical likelihood ratio test.
\end{abstract}

{\bf Keywords:} $E$-value, mixture model, likelihood ratio test, singularity, universal inference.

\section{Introduction}\label{sec:intro}

Mixture models have a long history and have found applications in many fields. Numerous monographs are dedicated to the topic; e.g., \citet{MR0624267}, \citet{MR0838090}, \citet{MR0926484}, \citet{zbMATH05569717}, \citet{MR1417721,MR2392878}, \citet{MR1789474}, \citet{MR1684363}, \citet{MR2265601}, \citet{Schlattmann2009}, and \citet{MR4704098}, among others.  
The problem of estimating parameters in mixture models was already prominently studied by \citet{doi:10.1098/rsta.1894.0003} who proposed the method of moments.  In contrast, the history of testing the homogeneity of mixture models is shorter and usually traced back to \citet{MR0216727} and \citet{doi:10.1207/s15327906mbr0503_6,wolfe1971monte}; other pioneering works include \citet{binder1977cluster} and \citet{MR0443145-hartigan}. 

\citet[Chap.~4]{zbMATH05569717} and also \citet[Sec.~1]{MR3815469} discuss the different approaches that have been developed to test homogeneity of mixture models. Our focus in this paper will be on likelihood ratio tests (LRTs).  We begin with a review of key developments on LRTs (Section~\ref{subsec:intro:lrt}) and then mention recent new advances that give rise to an alternative approach termed universal inference (Sections~\ref{subsec:intro:univ}--\ref{subsec:intro:univ:mix}).  The main contributions of this paper will provide the first results on the power of universal inference in comparison to the traditional LRT for homogeneity.

\subsection{Likelihood ratio tests}
\label{subsec:intro:lrt}

Ever since \citet{zbMATH03029575} derived the asymptotic distribution of the likelihood ratio (LR) statistic $\lambda_n$ when testing composite hypotheses for regular models, LRTs have been applied ubiquitously and, in particular, in mixture models.
Emerging from the problem of clustering, a frequently used Gaussian mixture model is given by densities
\begin{equation}\label{eq:hetero_model}
f_{p,t_1,t_2,\sigma_1^2,\sigma_2^2}(x) = (1-p) \phi(x;t_1,\sigma_1^2)+p \phi(x;t_2,\sigma_2^2),
\end{equation}
where $p\in[0,1]$ is the mixture weight and $\phi(x;t,\sigma^2):=(2\pi\sigma^2)^{-1/2}\exp\{-(x-t)^2/(2\sigma^2)\}$ is the normal density for mean $t$ and variance $\sigma^2$.  However, the likelihood function of the (heteroscedastic) model~\eqref{eq:hetero_model} is unbounded for any given sample of size $n$, as noted by \citet{MR0086464}.  A likelihood ratio based on a global maximum of the likelihood function, thus, does not exist.  A homoscedastic Gaussian model given by
\begin{equation}\label{eq:homo_model}
f_{p,t_1,t_2,\sigma^2}(x) = (1-p) \phi(x;t_1,\sigma^2)+p \phi(x;t_2,\sigma^2),
\end{equation}
which has the advantage of possessing a bounded likelihood function, is accordingly sometimes preferred.  Often with further specializations, this homoscedastic model has also played an important role for theoretical studies on the behavior of the LRT in mixture models.

Let $X_1,\dots,X_n$ be an i.i.d.~sample comprised of $n$ real-valued random variables.  For Gaussian mixture models, testing homogeneity means testing whether $X_1,\dots,X_n$ are drawn from a single normal population versus a mixture. Expressed in terms of the parameters of the homoscedastic model from \eqref{eq:homo_model}, the null hypothesis is obtained by taking $p=0$ or $p=1$ or $t_1=t_2$. 

\paragraph*{Finite sample simulations.}
Already \citet{scott1971clustering} and \citet{wolfe1971monte} noticed that the regularity conditions of \citet{zbMATH03029575} no longer hold for the mixture model, which prompted numerical explorations. 
Based on simulation results, \citet{doi:10.1207/s15327906mbr0503_6, wolfe1971monte} abandoned Wilks' approximation for the distribution of the LR statistic. 
\citet{wolfe1971monte} and \citet{MR0443145-hartigan} focused on the homoscedastic Gaussian model~\eqref{eq:homo_model}, with extension to $m$-dimensional observations and mean vectors. 
The former author conjectured an asymptotic chi-squared distribution but with the degrees of freedom doubled to $2m$ (the formula excludes the mixing proportion when determining the degrees of freedom).
The latter author guessed the asymptotic distribution is between $\chi_{m}^2$ and $\chi_{m+1}^2$.

Through additional simulations, \citet{doi:10.1207/s15327906mbr1602_3} rejected Hartigan's 1977 conjecture and claimed Wolfe's approximation is valid only when the sample size is of the order of ten times the difference in the number of parameters.  \citet{mclachlan1987bootstrapping} argued Wolfe's approximation is well-suited for the homoscedastic case \eqref{eq:homo_model} but not for the heteroscedastic case \eqref{eq:hetero_model} and suggested using~$\chi_6^2$ instead for the case $m=1$.  
\citet{MR0981003} revisited the univariate homoscedastic case and reported that for $n<1000$ the distribution of $\lambda_n$ is close to $\chi_2^2$ rather than $\chi_1^2$. 

To increase the chance of reliable convergence, \citet{MR0790575} suggested, instead of compact parameter space assumption introduced by \citet{MR0600553}, imposing the restriction that $\min_{i,j}({\sigma_i}/{\sigma_j}) \ge c$ for some constant $c > 0$ and proved that a strongly consistent global maximizer exists for any suitable choice of $c$.
\citet{zbMATH00788192} noted that, using the restriction $\min(\sigma_1^2,\sigma_2^2) \ge c' > 0$, the simulated distribution of the LR statistic is between the chi-squared distributions $\chi_4^2$ and $\chi_5^2$ for $c'= 10^{-6}$ (Figure~1 therein) and between~$\chi_5^2$ and~$\chi_6^2$ for $c'=10^{-10}$ (Figure~2 therein).  

\paragraph*{Asymptotic theory.}
In very special submodels, it is not hard to derive the asymptotic distribution of the LR statistic.  For example, \citet[Sec.~7]{MR0800514} and \citet[Chap.~4]{zbMATH05569717} treat models where the mixing proportion $p$ is the only unknown parameter and derive the asymptotic distribution of the LR statistic $\lambda_n$ for testing whether $p=0$; compare also \citet{MR0065087}.
On the other hand, \citet{MR1209483} provide the asymptotic distribution for models in which the mixing proportion $p$ is known.
In both cases, the asymptotic distributions are mixtures of chi-squared distributions of different degrees of freedom.  
However, the asymptotic behavior of $\lambda_n$ when neither parameter is known was not explored until the work of \citet{MR0822065}.

\citet{MR0822065} not only considered testing homogeneity under a more general model
\begin{equation}\label{eq:general_model}
f_{p,\theta_1,\theta_2}(x) = (1-p) g(x;\theta_1)+p g(x;\theta_2),
\end{equation}
with not necessarily Gaussian component densities $g$,
but also gave the first correct asymptotic expression of the corresponding LR statistics. However, their work assumes that~$\theta_1$ and~$\theta_2$ are bounded and separated ($|\theta_1-\theta_2|\ge\epsilon>0$). 
\citet{MR1468112}, \citet{MR1704563}, \citet{MR1994731}, and \citet{MR1842105} explored how to remove the separation condition in the general case \eqref{eq:general_model}.  \citet{MR1841402}, \citet{GoussanouYansounouLazareFranck2001Eadt} and \citet{MR2010439} offered solutions in the Gaussian case.

Relaxing the assumption of compactness of the parameter space is more challenging. 
\citet{MR0822066}, in a work that coincidentally appeared in the same proceedings as that of \citet{MR0822065}, considered the contaminated Gaussian mixture model
\begin{equation}
\label{eq:intro:contam}
f_{p,t}(x) = (1-p) \phi(x;0,1)+p \phi(x;t,1);
\end{equation}
see also \citet{berman1986some} and \citet{MR1093694}. \citet{MR0822066} intrinsically proved that the quadratic approximation of the LR statistic tends to infinity in probability.  The quadratic approximation also emerges in related work of \citet{MR0501523, MR0885917}; see also \citet[Chap.~5.4]{MR0838090}.
\citet{MR0822066} also conjectured that (1) the quadratic approximation has exactly the order of $O(\log\log n)$, and~(2) the quadratic approximation and the LR statistic are stochastically equivalent.
\citet{10015228522} proved the first part of Hartigan's 1985 conjecture.  
\citet{MR1468112}, \citet[Chap.~3]{MR2695869}, \citet{MR1841402}, \citet{MR1842105}, and \citet{MR1955343}, among others, partially proved the second part of Hartigan's 1985 conjecture by imposing restrictions on parameters, and \citet{MR2058122} completed the whole story by proving the original conjecture.  
In addition, \citet{MR2146091} and \citet{MR2265342} studied the power of the LRT based on the asymptotic distribution derived by \citet{MR2058122} under two different types of local alternative hypotheses.

\paragraph*{Constrained, restricted and modified likelihood ratio test.}

In terminology adapted from \citet{MR2146091}, an LRT formed under a compactness constraint on the location parameter(s) of a mixture model is often referred to as a {\em constrained likelihood ratio test}.  Such tests were explored, to name a few, by \citet{MR1842105}, \citet{MR1841402}, \citet{MR1955343}, and \citet{MR2265342}.
On the other hand, \citet{MR1387544, MR1812716} and \citet{MR1366707} considered the {\em restricted likelihood ratio test}, investigating the asymptotic distribution theory of the LRT under the restriction that $\epsilon\le p\le 1-\epsilon$ for a fixed~$\epsilon > 0$.

\citet{MR1671968} and \citet{MR1811988} introduced the {\em modified (penalized) likelihood ratio test} by adding a penalty term in $p$.  This term penalizes mixing proportions close to~$0$~or~$1$, and when the mixing proportion is unidentifiable under a null hypothesis, its estimate is moved towards a unique minimizer of the penalty.
The modified LRT is discussed extensively in \citet{MR2126840} and \citet{MR2382561}.
Based on the penalized likelihood ratio, \citet{MR2543701} and \citet{MR2752604} developed another variant of an LRT, namely, the EM test for testing  (Gaussian) mixture models.

\subsection{Universal inference}
\label{subsec:intro:univ}

Taking a perspective of game-theoretic statistical inference and safe anytime-valid inference (SAVI), a series of recent papers by 
\citet{MR4255905}, 
\citet{MR4298879, MR4186488, MR4596762},
\citet{MR4825000}, \citet{MR4590499}, \citet{MR4364896}, 
\citet{MR4242731}, \citet{ramdas2020admissible, MR4364897, MR4665027}, \citet{MR4660691}, 
and \citet{MR4460577}
proposed and examined the notion of an $e$-value.  An $e$-value is a nonnegative random variable with expectation no larger than $1$ under (any) null hypothesis.
By Markov's inequality, comparing an $e$-value with the threshold $1/\alpha$ yields a level $\alpha$ test of the considered null hypothesis.  

\citet{MR4242731} contributed to this field by proposing the construction of a split likelihood ratio that constitutes an $e$-value.  This leads to a split likelihood ratio test (SLRT) that is finite-sample-valid under virtually no regularity conditions.  Fittingly, the inferential methodology is termed universal inference.

A natural question is whether the universality of the SLRT comes at a price of a severe loss of power.
Indeed, \citet{MR4242731} already commented, ``our methods may not be optimal, though we do not yet fully understand how close to optimal they are beyond special cases (uniform, Gaussian).''
\citet{MR4527023} and \citet{MR4529724} provide empirical evidence to illustrate that the split likelihood ratio test (SLRT) may be highly conservative.
\citet{MR4588356}, on the other hand, studied the performance of universal inference under regular settings, where the classical likelihood ratio test itself applies, and concluded the power is reasonable by showing the ratio of squared radii of confidence sets of SLRT and LRT is bounded in mean.

\subsection{Universal inference in mixture models}
\label{subsec:intro:univ:mix}

\citet{MR4242731} described numerous settings, 
including testing the number of components in mixture models, 
in which the universal LRT is the first hypothesis test with finite sample validity.  In fact, for testing the number of mixture components even asymptotically valid competitors are difficult to construct.
 
\citet{MR4242731} and \citet{MR4529724} performed simulation studies and gave some theoretical justifications for Gaussian mixture models.  However, both assume that the mixing proportion $p$ is known to be $1/2$.  In that case, the mixture model is nearly regular.
\citet{MR4588356} claimed their study 
in the regular settings ``as a precursor to studying the power in these important, but currently intractable, settings''.  The objective of this paper is to investigate the performance of universal inference in non-regular Gaussian mixture models.

\subsection{Outline of the paper}

The rest of the paper is organized as follows. Section~\ref{sec:2} reviews universal inference and the split likelihood ratio test proposed by \citet{MR4242731}.  Section~\ref{sec:3} turns to the problem of testing homogeneity in Gaussian mixtures.  Specifically, we take up the classical model from~\eqref{eq:intro:contam} and test the null hypothesis of a single standard normal population. We rigorously develop the large-sample theory of the SLRT for this problem and obtain an asymptotic normality result under the null (with diverging mean and variance).  In Sections~\ref{sec:4} and~\ref{sec:5}, we conduct local power analyses for the SLRT for two cases of alternatives.  A key finding emerges from the non-contiguous alternatives treated in Section~\ref{sec:5}, which offer a surprising result on the power of the SLRT:  It is able to achieve the same detection boundary as the classical LRT. 
Our numerical results in Section~\ref{sec:6} illustrate this fine-grained analysis for large sample sizes and additionally consider variations of the testing problem are provided in Section~\ref{sec:6}.  
We conclude in Section~\ref{sec:discussion} with a discussion of our results, which we consider intriguing positive results on the potential of universal inference to rigorously solve challenging testing problems in mixture models.

\paragraph*{Notations.}
A list of all commonly used notations in the paper is given in Table~\ref{tab:1}.  Detailed descriptions can be found in the later sections. 

{
\renewcommand{\tabcolsep}{1pt}
\renewcommand{\arraystretch}{1.20}
\begin{table}[t] 
\centering
\caption{Commonly used notations}\label{tab:1}
\begin{tabular}{L{0.6875in}L{2.125in}L{0.6875in}L{2.25in}}
Notation                         & Definition                                                 & 
Notation                         & Definition \\
$Z_{i,k}(t)$                     & $\exp(tX_{i,k}-t^2/2)-1$                                   & 
$W_{i,k}^*(t)$                   & $(e^{t^2}-1)^{-1/2}Z_{i,k}(t)$~~~for~$t\ne0$ \\ 
$W_{i,k}(t)$                     & $Z_{i,k}(t)\exp(-t^2/2)$                                   & 
$W_{i,k}^*(0)$                   & $X_{i,k}$ \\
$L_{n,k}(\eta,t)$                & $\sum_{i=1}^{n_k}\log\big\{1+\eta W_{i,k}(t)\big\}$        & 
$L_{n,k}^*(\eta, t)$             & $\sum_{i=1}^{n_k}\log\big\{1+\eta W_{i,k}^*(t)\big\}$ \\
$\big(\widehat \eta_{n,k},\widehat t_{n,k}\big)$
                                 & $\argmax_{\eta\in[0,\exp(t^2/2)], t\in\R}L_{n,k}(\eta, t)$ & 
$\big(\widehat \eta^*_{n,k},\widehat t^*_{n,k}\big)$ 
                                 & $\argmax_{\eta\in[0,(e^{t^2}-1)^{1/2}], t\in\R}L_{n,k}^*(\eta, t)$ \\
$\widehat \eta_{n,k}(t)$         & $\argmax_{\eta\in[0,\exp(t^2/2)]}L_{n,k}(\eta, t)$         & 
$\widehat\eta^*_{n,1}(t)$        & $\argmax_{\eta\in[0,(e^{t^2}-1)^{1/2}]}L_{n,k}^*(\eta, t)$ \\
$S_{n,k}(t)$                     & $n_k^{-1/2}\sum_{i=1}^{n_k}W_{i,k}(t)$                     & 
$S_{n,k}^*(t)$                   & $n_k^{-1/2}\sum_{i=1}^{n_k}W_{i,k}^*(t)$ \\
$V_{n,k}(t)$                     & $n_k^{-1}  \sum_{i=1}^{n_k}W_{i,k}(t)^2$                   & 
$V_{n,k}^*(t)$                   & $n_k^{-1}  \sum_{i=1}^{n_k}W_{i,k}^*(t)^2$ \\[-1.75mm]
$M_{n,k}$                        & $\sup_{t\in\R} S_{n,k}(t)$ & \smash[b]{\raisebox{2mm}{\rule{2.75in}{0.5pt}}}\\[-1.75mm]
$\lambda_{n,k}$                  & 2\,$\sup_{\eta\in[0,\exp(t^2/2)], t\in\R} L_{n,k}(\eta,t)$ &
$\lambda_{n,k}(I)$               & 2\,$\sup_{\eta\in[0,\exp(t^2/2)],|t|\in I} L_{n,k}(\eta,t)$ \\
$x_{+}$                          & $\max\{x,0\}$                                              &
$t_0$                            & a constant greater than $4$ \\
$\log_{(2)} n$                   & $\log \log n$                                              &
$\log_{(3)} n$                   & $\log \log \log n$ \\
$\epsilon_{1,n}$                 & $(\log n)^{-1}$                                            &
$\epsilon_{2,n}$                 & $(\log \log n)^{-1}$ \\
$I_{1,n}$                        & $[0, t_0]$                                                 &
$I_{2,n}$                        & $[t_0, \sqrt{(\log n)/2}\,]$ \\
$I_{3,n}$                        & $[\sqrt{(\log n)/2}, \sqrt{2\log n}\,]$                    &
$I_{4,n}$                        & $[\sqrt{2\log n}, +\infty)$ \\
$c_{1,n}$                        & $2\sqrt{\smash[b]{\log_{(3)} n}}$                                     & 
$c_{2,n}$                        & $\sqrt{(\log n)/2}-2\sqrt{\smash[b]{\log_{(2)} n}}$ \\
$A_{1,n}$                        & $[t_0, c_{1,n}]$                                   & 
$A_{2,n}$                        & $[c_{1,n}, c_{2,n}]$ \\
$A_{2,n}^{\sqsupset}(\ell)$      & $[c_{1,n},\, (1-\ell)c_{1,n}+\ell c_{2,n}]$        &
$A_{2,n}^{\sqsubset}(\ell)$      & $[(1-\ell)c_{1,n}+\ell c_{2,n},\, c_{2,n}]$ \\
$A_{3,n}$                        & $[c_{2,n}, \sqrt{(\log n)/2}]$                     &
$D_n$                            & $[0,c_{1,n}]\cup [c_{2,n}, \infty)$ \\[-2mm]
\end{tabular}
\end{table}
}

\section{Background on the split likelihood ratio test}\label{sec:2}

Let $\{\P_{\theta}: \theta \in \Theta\}$ be a parametric statistical model, with parameter space $\Theta \subseteq \R^d$.
The distributions~$\P_{\theta}$ are assumed to have probability densities $f_{\theta}$ with respect to a common dominating measure $\nu$.  
Assume the observations $X_1, \dots, X_n$ are independent and identically distributed (i.i.d.) according to an unknown distribution $\P_{\theta}$ in the model, and suppose that, given a subset~$\Theta_0\subsetneq\Theta$, we are interested in the testing problem 
\begin{equation}\label{eq:testing}
H_0: \theta \in \Theta_0 \quad\text{versus}\quad H_1:\theta\in\Theta\setminus\Theta_0.
\end{equation}

Let $\ell(\theta) = \sum_{i=1}^{n} \log f_{\theta}(X_{i})$ be the log-likelihood function.  The (classical) LR statistic for~\eqref{eq:testing} is given by
\[
\lambda_n:=2\Big\{\sup_{\theta\in\Theta}\ell({\theta})-\sup_{\theta\in\Theta_0}\ell({\theta})\Big\}.
\]
For regular problems, asymptotically valid LRTs may be constructed via Wilks' theorem, i.e., the fact that the distribution of~$\lambda_n$  converges to the chi-squared distribution~$\chi_m^2$ under the null hypothesis.  However, when regularity conditions fail, it can be difficult to provide theoretical insights on the distribution of likelihood ratios, and standard bootstrapping is not necessarily valid; see, e.g., \citet{MR2860333}.
These issues are particularly pressing for mixture models.

Universal inference proposed by \citet{MR4242731} circumvents these inferential problems by modifying the likelihood ratio test in a data-splitting scheme. 
The data are divided into two parts, $D_0$ for inference and $D_1$ for estimation. 
For this split, choose a fraction $m_0 \in (0,1)$ and partition the data into two disjoint subsets 
$D_0 = \{ X_{1,0},\ldots,X_{\lfloor m_0n \rfloor,0} \}$ and 
$D_1 = \{ X_{1,1},\ldots,X_{\lceil m_1n \rceil,1} \}$, where $m_1:= 1-m_0$.  
We will write $n_0$ for $\lfloor m_0n \rfloor$ and $n_1$ for $\lceil m_1n \rceil$ to shorten notation.  
Let
\[
\ell_k(\theta) = \sum_{i =1}^{n_k} \log f_{\theta}(X_{i,k}), \quad k=0,1,
\]
be the likelihood functions based on $D_0$ and $D_1$, respectively.  Let $\widehat{\theta}_{n,1}:= \argmax_{\theta \in \Theta} \ell_{1}(\theta)$ be the maximum likelihood estimator (MLE) of $\theta$ under the full model and based on $D_1$, and let $\widehat{\theta}_{n,0} := \argmax_{\theta \in \Theta_0} \ell_{0}(\theta)$ be the MLE of $\theta$ under $H_0$ and based on $D_0$.  Now the {\em split likelihood ratio statistic} (SLR statistic) is defined as
\begin{equation}\label{eq:SLRT}
\lambda_n^{\rm split}:=2\Big\{\ell_{0}(\widehat{\theta}_{n,1})-\ell_{0}(\widehat{\theta}_{n,0})\Big\}.
\end{equation}

As shown in \citet{MR4242731} and \citet{MR4527023}, under the null hypothesis $H_0: \theta \in \Theta_0$, it holds for any positive integer $n$ that
\begin{equation}\label{eq:jiaevalue}
\E_{\theta}[\exp(\lambda_n^{\rm split}/2)]
   = \E_{\theta}\Bigg[\frac{\prod_{i=1}^{n_0} f_{\widehat{\theta}_{n,1}}(X_{i,0})}{ \prod_{i=1}^{n_0} f_{\widehat{\theta}_{n,0}}(X_{i,0})}\Bigg]
   \leq \E_{\theta}\Bigg[\E_{\theta}\Bigg[\frac{\prod_{i=1}^{n_0} f_{\widehat{\theta}_{n,1}}(X_{i,0})}{ \prod_{i=1}^{n_0} f_{\theta}(X_{i,0})}\Bigg|D_1\Bigg]\Bigg]
   \leq 1,
\end{equation}
where we use the fact that $\widehat{\theta}_{n,1}$ is fixed conditioning on $D_1$, and for any fixed $\theta^* \in \Theta$ it holds that 
\[
   \E_{\theta}\Bigg[\frac{\prod_{i=1}^{n_0} f_{\theta^*}(X_{i,0})}{\prod_{i=1}^{n_0} f_{\theta}(X_{i,0})}\Bigg] \leq \int \prod_{i=1}^{n_0} f_{\theta^*}(x_{i,0}) {\rm d}x_{1,0}\cdots {\rm d}x_{n_0,0}
   = 1.
\]
In other words, the likelihood ratio corresponding to $\lambda_n^{\rm split}$ is an $e$-value; recall Section~\ref{subsec:intro:univ}.
An application of Markov's inequality yields for any $\alpha \in (0, 1)$ and any positive integer $n$,
\begin{equation}\label{eq:univ_critc}
\P_{\theta}(\lambda_n^{\rm split}> -2\log\alpha)\le \alpha.
\end{equation}
Accordingly, the test given by $\mathds{1} (\lambda_n^{\rm split}> -2\log\alpha)$ is finite-sample--valid at significance level~$\alpha$.  The test is universal in the sense that the critical point $-2\log\alpha$ does not depend on the statistical model and the true parameter.  However, it is not obvious whether the test can achieve competitive power.  This issue motivates our subsequent study of the SLRT under Gaussian mixture models.

\section{Asymptotic null distribution}\label{sec:3}

In the sequel, we consider the contaminated Gaussian mixture model
\begin{equation}\label{eq:two_contamin}
f_{p,t}(x) = (1-p) \phi(x;0,1)+p \phi(x;t,1)
\end{equation}
where the mixture weight $p\in[0,1]$ and the mean $t\in\R$ are unknown parameters.  Given an i.i.d.~sample $X_1,\dots,X_n$, we consider the homogeneity testing problem 
\begin{equation}
\label{eq:homogeneity:testing}
 H_0: p=0~\text{or}~t=0~~~\text{against}~~~
 H_1: p\in(0,1), t\in\R\backslash\{0\}.
\end{equation}
We emphasize that $H_0$ specifies a standard normal distribution.
In the main result of this section, we derive the asymptotic null distribution of the split likelihood ratio statistic for the homogeneity problem in~\eqref{eq:homogeneity:testing}.  Whereas an extreme value distribution has been found for the ordinary LRT, our  Theorem~\ref{thm:null} gives a normal limit for the SLRT.

We begin by stating seminal results on the standard LRT.  Suppose that the null hypothesis $H_0$ is true; in other words, $X_1, \dots, X_n$ are i.i.d.~standard normal random variables. 
\citet{MR0822066} correctly conjectured that the LR statistic diverges to $+\infty$ in probability at the order of $O(\log\log n)$, instead of converging to some chi-squared distribution.
In addition,
the following important results hold for the {\em likelihood ratio statistic}~$\lambda_n$:

{{{
\begin{proposition}[Main theorem in \citet{10015228522}]\label{prop:theoremBC}
Denote 
\[
S_n(t):=n^{-1/2}\sum_{i=1}^{n}\big(e^{tX_i-t^2/2}-1\big)e^{-t^2/2}~~~\text{and}~~~
M_n:=\sup_{t\in\R} S_n(t).
\]
Then
\[\yestag\label{eq:theoremBC}
\lim_{n\to\infty}\P_{H_0}\Big\{\sqrt{\log_{(2)}n}\Big(M_n-\sqrt{\log_{(2)}n}\,\Big)+\log(\sqrt{2}\pi)\leq x\Big\}=\exp\{-\exp(-x)\}, \quad x\in\R.
\]
Moreover, 
\[\yestag\label{eq:theoremBCmore}
\sup_{|t|\in D_n}S_n(t)=o_\P\Big(\sqrt{\log_{(2)}n}\,\Big),
\]
where $D_n=\Big[0,2\sqrt{\log_{(3)}n}\,\Big]\cup \Big[\sqrt{(\log n)/2}-2\sqrt{\log_{(2)} n}, \infty\Big)$.
\end{proposition}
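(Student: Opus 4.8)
The plan is to reduce $M_n=\sup_{t\in\R}S_n(t)$, after localization and a Gaussian coupling, to the supremum of a smooth stationary-like Gaussian process over an interval of length of order $\sqrt{(\log n)/2}$, and then to read off the Gumbel limit from classical upcrossing (Rice-formula) asymptotics. Write $W_i(t):=(e^{tX_i-t^2/2}-1)e^{-t^2/2}$, so $S_n(t)=n^{-1/2}\sum_{i=1}^{n}W_i(t)$. Under $H_0$ one has $\E W_i(t)=0$, $\mathrm{Var}\,W_i(t)=1-e^{-t^2}$, and $\mathrm{Corr}(W_i(s),W_i(t))=(e^{st}-1)/\sqrt{(e^{s^2}-1)(e^{t^2}-1)}$, which tends to $e^{-(s-t)^2/2}$ as $s,t\to\infty$; thus for large $|t|$ the finite-dimensional limit of $S_n$ is a unit-variance Gaussian process with local covariance $e^{-h^2/2}=1-\tfrac12 h^2+o(h^2)$ (second spectral moment $\lambda_2=1$), and $S_n(-t)$ is asymptotically uncorrelated with $S_n(t)$, so the tails $t>0$ and $t<0$ are asymptotically independent. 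It is convenient to work with the unit-variance process $S_n^*(t):=(1-e^{-t^2})^{-1/2}S_n(t)$; on the range of $t$ that ultimately matters, $\sqrt{1-e^{-t^2}}=1-o((\log_{(2)}n)^{-1})$, so passing from $S_n$ to $S_n^*$ perturbs $M_n$ by $o_\P((\log_{(2)}n)^{-1/2})$, below the resolution we need to track.

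\emph{Step 1 (localization).} I would first prove \eqref{eq:theoremBCmore}, i.e.\ $\sup_{|t|\in D_n}S_n(t)=o_\P(\sqrt{\log_{(2)}n})$, which confines the maximizer to $\pm A_{2,n}=\pm[c_{1,n},c_{2,n}]$. For $|t|\in[0,c_{1,n}]$ the process is close to a smooth Gaussian process and $c_{1,n}=2\sqrt{\log_{(3)}n}$ is short enough that a first-moment (Rice-type) upcrossing bound gives $\sup S_n=o_\P(\sqrt{\log_{(2)}n})$ there (using $S_n(0)=0$). For $|t|\ge c_{2,n}=\sqrt{(\log n)/2}-2\sqrt{\log_{(2)}n}$ (covering $A_{3,n}$, $I_{3,n}$, $I_{4,n}$) the process is no longer approximately Gaussian: its variance is carried by the $O_\P(1)$ largest order statistics, and on the event $\{\max_i|X_i|\le(1+\delta)\sqrt{2\log n}\}$ (probability $1-o(1)$) one checks that $S_n(\cdot)$ is, up to a small Gaussian remainder, a sum of $O_\P(1)$ bumps $t\mapsto n^{-1/2}e^{tX_i-t^2}$, each of height $O_\P(1)$ (the bump from $X_i$ peaking at $t=X_i/2$), whence $\sup_{|t|\ge c_{2,n}}S_n(t)=O_\P(1)=o_\P(\sqrt{\log_{(2)}n})$; the complementary rare event is disposed of separately.

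\emph{Step 2 (Gaussian coupling on the effective interval).} On $\pm A_{2,n}$ I would replace $S_n^*$ by a Gaussian process $G_n$ (with independent positive- and negative-$t$ parts) having the same covariance. Truncating the $X_i$ at a level $\asymp\sqrt{2\log n}$, on a probability-$(1-o(1))$ event, makes all moments of the summands controllable, and a Hungarian-type strong approximation of the partial-sum process, applied on a fine $t$-grid and extended off-grid by the moduli of continuity of $S_n^*$ and $G_n$, yields $\sup_{|t|\in A_{2,n}}|S_n^*(t)-G_n(t)|=o_\P((\log_{(2)}n)^{-1/2})$. This is the main obstacle: the summands have only lognormal upper tails (no exponential moments), the coupling must be uniform over a $t$-interval whose length grows like $\sqrt{\log n}$, and the error budget is exactly the $(\log_{(2)}n)^{-1/2}$ size of the fluctuations of $M_n$ about its centering; the right endpoint $c_{2,n}$ is precisely the largest scale at which the expected number of ``disruptive'' large summands remains $o(1)$ uniformly while the coupling error stays within budget.

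\emph{Step 3 (extreme-value asymptotics and assembly).} For $G_n$, which on $A_{2,n}$ is asymptotically unit-variance stationary with $\lambda_2=1$, the expected number of upcrossings of a level $u$ in $A_{2,n}$ is $(c_{2,n}-c_{1,n})\tfrac{1}{2\pi}e^{-u^2/2}\sim\sqrt{(\log n)/2}\,\tfrac{1}{2\pi}e^{-u^2/2}$; a Poisson approximation for these upcrossings (equivalently, for exceedances on a fine grid), combined with the asymptotic independence of the two tails, gives $\P\{\sup_{|t|\in A_{2,n}}G_n(t)\le u\}\to\exp(-e^{-x})$ with $u$ chosen so that $\tfrac{1}{\pi}\sqrt{(\log n)/2}\,e^{-u^2/2}=e^{-x}$, equivalently $\sqrt{\log_{(2)}n}\big(u-\sqrt{\log_{(2)}n}\big)+\log(\sqrt2\pi)=x$. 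Transferring back through Steps 1--2 --- whose exceptional events have probability $o(1)$ and whose surplus contributions are $o_\P(\sqrt{\log_{(2)}n})$, hence negligible next to $M_n\asymp\sqrt{\log_{(2)}n}$ --- yields $\P_{H_0}\{\sqrt{\log_{(2)}n}(M_n-\sqrt{\log_{(2)}n})+\log(\sqrt2\pi)\le x\}\to\exp(-\exp(-x))$, which is \eqref{eq:theoremBC}.
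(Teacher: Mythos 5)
This proposition is quoted from Bickel and Chernoff (1993) --- the paper does not re-prove it, offering only the one-line remark that the idea is to couple $S_n$ with a Gaussian process $\widetilde S$ on $|t|\le\sqrt{(\log n)/2}$ and to show $S_n$ is small outside that range.  Your three-step sketch (localize the maximizer to $A_{2,n}$, couple with a Gaussian process there, read off the Gumbel limit from Rice/Poisson upcrossing asymptotics with level $u$ solving $\pi^{-1}\sqrt{(\log n)/2}\,e^{-u^2/2}=e^{-x}$) is faithful to that route and to the actual Bickel--Chernoff argument, and your algebra checks out: $\E W_i(t)=0$, $\mathrm{Var}\,W_i(t)=1-e^{-t^2}$, $\mathrm{Cov}(W_i(s),W_i(t))=e^{-(s^2+t^2)/2}(e^{st}-1)$, and solving the level equation indeed gives $\sqrt{\log_{(2)}n}\,(u-\sqrt{\log_{(2)}n})+\log(\sqrt2\pi)=x$.

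Two technical points deserve flagging, though neither is a fatal gap.  First, in Step~2 the plain KMT bound $\sup_u|B_n(u)-B(u)|=O_\P(n^{-1/2}\log n)$ is, by itself, too coarse: the integrand $x\mapsto e^{tx-t^2}$ has exponentially large total variation over the effective range of $x$ when $t$ is near $c_{2,n}$, and a naive integration-by-parts estimate overshoots the $(\log_{(2)}n)^{-1/2}$ error budget.  One must instead use a weighted strong-approximation bound for the empirical process (as the paper itself does in its Lemma~3.2(ii), invoking Cs\"org\H{o}--R\'ev\'esz-type results with the $[\Phi(1-\Phi)]^{1/2}$ weight), which tames the contribution from the tails of the integrand.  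Second, Bickel--Chernoff (and the present paper, in the proof of Lemma~3.1) render the limiting process \emph{exactly} stationary by adding the independent term $\widetilde X e^{-t^2/2}$, giving $\widetilde S$ with covariance $e^{-(s-t)^2/2}$; the perturbation is $O_\P(e^{-c_{1,n}^2/2})=O_\P((\log\log n)^{-2})$ on $A_{2,n}$ and hence negligible.  You instead argue that the process is \emph{asymptotically} stationary and apply the stationary upcrossing formula directly; that is defensible but requires an extra uniformity argument that the Rice intensity converges uniformly on $A_{2,n}$, whereas the explicit stationarization sidesteps it cleanly.
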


\begin{proposition}[Theorem~2 in \citet{MR2058122}]\label{prop:theoremLS}
The likelihood ratio statistic $\lambda_n$ for testing homogeneity in the contaminated
Gaussian mixture model \eqref{eq:two_contamin} is given by
\[
2\sup_{p\in[0,1],t\in\R} \sum_{i=1}^{n}\log\big\{1+p\big(e^{tX_i-t^2/2}-1\big)\big\}
\]
and satisfies
\[\yestag\label{eq:theoremLS}
\lim_{n\to\infty}\P_{H_0}\{\lambda_n - \log \log n + \log(2\pi^2) \leq x\} = \exp\{-\exp(-x/2)\}, \quad x\in\R.
\]
Consequently,
\[
\lim_{n\to\infty}\P_{H_0}\{\lambda_n > c_{n,\alpha}\} = \alpha,
\]
where the critical value is defined as 
\begin{equation}\label{eq:def_c}
c_{n,\alpha} =  \log \log n - \log(2\pi^2) - 2\log\log(1-\alpha)^{-1}.
\end{equation}
\end{proposition}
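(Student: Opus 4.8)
My plan is to derive the limit law from Proposition~\ref{prop:theoremBC} by establishing the stochastic equivalence
\[
\lambda_n \;=\; M_n^2 + o_\P(1)\qquad\text{under }H_0 ,
\]
which is in essence the second part of Hartigan's conjecture. Put $Z_i(t):=e^{tX_i-t^2/2}-1$, $W_i(t):=Z_i(t)e^{-t^2/2}$, $V_n(t):=n^{-1}\sum_{i=1}^n W_i(t)^2$ and $L_n(\eta,t):=\sum_{i=1}^n\log\{1+\eta W_i(t)\}$; substituting $\eta=p\,e^{t^2/2}$ turns the expression in the proposition into $\lambda_n=2\sup_{t\in\R}\sup_{0\le\eta\le e^{t^2/2}}L_n(\eta,t)$. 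The first ingredient is a \emph{parabolic approximation} of the inner maximization: from $\log(1+x)=x-\tfrac12x^2+\cdots$ one finds that the maximizer $\widehat\eta(t)$ is small, that the upper constraint $\eta\le e^{t^2/2}$ is inactive with probability tending to one, and that
\[
2\sup_{\eta\ge0}L_n(\eta,t)\;=\;\frac{\big(S_n(t)_+\big)^2}{V_n(t)}\;+\;R_n(t),
\]
where $S_n(t)$ is as in Proposition~\ref{prop:theoremBC} and $R_n(t)$ collects the cubic-and-higher Taylor terms. Since $V_n(t)\to1-e^{-t^2}$ in probability for fixed $t$, and $1-e^{-t^2}\to1$ as $|t|\to\infty$, the leading term behaves like $\big(S_n(t)_+\big)^2$, up to corrections that must be shown negligible.

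Next I would localize the maximum over $t$. Let $J_n:=\big[\,2\sqrt{\log_{(3)}n}\,,\ \sqrt{(\log n)/2}-2\sqrt{\log_{(2)}n}\,\big]$, the complement in $[0,\infty)$ of the set $D_n$ in Proposition~\ref{prop:theoremBC}. Combining the parabolic upper bound with \eqref{eq:theoremBCmore} — and with elementary local asymptotics on the bounded subregion $[-t_0,t_0]$ of $D_n$, where $t\mapsto S_n(t)$ converges to a tight Gaussian process and the profiled likelihood is $O_\P(1)$ there — one gets $2\sup_{|t|\in D_n}\sup_\eta L_n(\eta,t)=o_\P(\log_{(2)}n)$. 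Since \eqref{eq:theoremBC} forces $M_n\to\infty$ at rate $\sqrt{\log_{(2)}n}$, with probability tending to one both $\lambda_n$ and $M_n^2$ are realized by suprema over $|t|\in J_n$, so it suffices to compare them on this ``critical band''.

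The main obstacle is to upgrade the parabolic approximation to a \emph{uniform} statement on $J_n$ with total error $o_\P(1)$ — namely $\sup_{|t|\in J_n}|R_n(t)|=o_\P(1)$, and the replacement of $V_n(t)$ by $1$ incurring an error that survives the supremum as $o_\P(1)$ — whence $\lambda_n=\big(\sup_{|t|\in J_n}S_n(t)\big)^2+o_\P(1)=M_n^2+o_\P(1)$. This is genuinely delicate: on $J_n$ the argument $|t|$ diverges and the $W_i(t)$ have lognormal tails — $\E[W_i(t)^4]$ already grows like $e^{4t^2}$ — so that $V_n(t)$ fails to concentrate uniformly and a crude bound on $R_n(t)$ is far too lossy. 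The endpoints of $J_n$ are chosen precisely so that $e^{-t^2}$ is negligible there (at the left endpoint $e^{-t^2}=(\log_{(2)}n)^{-4}$, small against $M_n^{-2}$), after which one truncates the $W_i(t)$ and decomposes $J_n$ into subintervals on which the truncated score processes, their quadratic variations and their maxima are controlled by exponential inequalities and a chaining argument, with constants tracked so that nothing beyond $o_\P(1)$ accumulates in passing to $\sup_t$; the regime $|t|\to\infty$, where $\widehat\eta(t)$ nears its constraint, is absorbed into the same analysis. This step is exactly the portion of Hartigan's conjecture settled by \citet{MR2058122}, whose argument I would follow.

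Granting $\lambda_n=M_n^2+o_\P(1)$, the limit law is immediate from Proposition~\ref{prop:theoremBC} via the factorization $M_n^2-\log_{(2)}n=\big(M_n-\sqrt{\log_{(2)}n}\,\big)\big(M_n+\sqrt{\log_{(2)}n}\,\big)$: by \eqref{eq:theoremBC} the first factor is $O_\P\big(1/\sqrt{\log_{(2)}n}\,\big)$, so the second equals $2\sqrt{\log_{(2)}n}+O_\P\big(1/\sqrt{\log_{(2)}n}\,\big)$, and hence $M_n^2-\log_{(2)}n=2\sqrt{\log_{(2)}n}\big(M_n-\sqrt{\log_{(2)}n}\,\big)+o_\P(1)$. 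Substituting \eqref{eq:theoremBC} yields $\P_{H_0}\{M_n^2-\log_{(2)}n+\log(2\pi^2)\le x\}\to\exp\{-\exp(-x/2)\}$, which is \eqref{eq:theoremLS}; and solving $\exp\{-\exp(-(c-\log_{(2)}n+\log(2\pi^2))/2)\}=1-\alpha$ for $c$ gives exactly $c_{n,\alpha}$ as in \eqref{eq:def_c}.
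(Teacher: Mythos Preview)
Your proposal is correct and follows exactly the route the paper sketches: the paper itself does not prove this proposition in detail but states that ``the proof of Proposition~\ref{prop:theoremLS}, given Proposition~\ref{prop:theoremBC}, is concluded by justifying the asymptotic equivalence between $\lambda_n$ and $M_n^2$,'' which is precisely your plan, with the same reparameterization, the same localization to the band $A_{2,n}$ (your $J_n$), and the same appeal to \citet{MR2058122} for the delicate uniform quadratic approximation. Your derivation of \eqref{eq:theoremLS} from \eqref{eq:theoremBC} via the factorization of $M_n^2-\log_{(2)}n$ is also the standard continuous-mapping step.
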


The main idea of proving Proposition~\ref{prop:theoremBC} is to relate $S_n$ to a Gaussian process $\widetilde S$ and to show $S_n(t)$ behaves like $\widetilde S(t)$ for $|t|\le \sqrt{(\log n)/2}$ and is small otherwise.  
The proof of Proposition~\ref{prop:theoremLS}, given Proposition~\ref{prop:theoremBC}, is concluded by justifying the asymptotic equivalence between $\lambda_n$ and $M_n^2$; the latter is actually the quadratic approximation of $\lambda_n$.

\smallskip

We are now ready to explore the distribution of the SLR statistic~$\lambda_n^{\rm split}$, which is defined as in \eqref{eq:SLRT}.
For our problem, the SLR statistic $\lambda_n^{\rm split}$ can be rewritten as
\(
\lambda_n^{\rm split}=2L_{n,0}^{\rm orig}(\widehat p_{n,1}, \widehat t_{n,1}),
\)
where
\begin{align*}
L_{n,k}^{\rm orig}(p, t)&:=\sum_{i=1}^{n_k}\log\Big\{1+p Z_{i,k}(t)\Big\},~~~
\Big(\widehat p_{n,1},\widehat t_{n,1}\Big):=\argmax_{p\in[0,1],\,t\in\R}L_{n,1}^{\rm orig}(p, t),\\
\text{and}~~~Z_{i,k}(t)&:=\exp(tX_{i,k}-t^2/2)-1.
\end{align*}
Furthermore, we introduce a re-parameterization for the likelihood ratio $L_{n,k}^{\rm orig}(p,t)$: 
\begin{equation}\label{eq:repara1}
\eta = p \exp(t^2/2)
~~~\text{and}~~~
W_{i,k}(t)=Z_{i,k}(t)\exp(-t^2/2).
\end{equation}
Accordingly, the SLR statistic can be represented as
\(
\lambda_n^{\rm split}=2L_{n,0}(\widehat\eta_{n,1}, \widehat t_{n,1}),
\)
where
\begin{align*}
L_{n,k}(\eta,t)=\sum_{i=1}^{n_k}\log\Big\{1+\eta W_{i,k}(t)\Big\}
~~~\text{and}~~~
\Big(\widehat\eta_{n,1},\widehat t_{n,1}\Big):=\argmax_{\eta\in[0,\exp(t^2/2)],\,t\in\R}L_{n,1}(\eta, t).
\end{align*}
Throughout Sections~\ref{sec:3}--\ref{sec:4}, we will focus on the re-parameterization~\eqref{eq:repara1}.
In Section~\ref{sec:5}, we will use a slightly different re-parameterization, denoted by the superscript$\,^*$.
Let $\widehat\eta_{n,k}(t)$ be the maximum likelihood estimator of $\eta$ for a given $t$, i.e., 
\[
L_{n,k}(\widehat\eta_{n,k}(t), t)=\sup_{\eta\in [0,\exp(t^2/2)]} L_{n,k}(\eta,t),
\]
and define, for any subinterval $I\subseteq [0,\infty)$, 
\[
\lambda_{n,k}(I):= 2\sup_{\eta\in[0,\exp(t^2/2)],|t|\in I} L_{n,k}(\eta,t).
\]
}}}

The following result is the main result in this section.  It shows that the SLR statistic tends to $-\infty$ at the order of $O(\log\log n)$ and is asymptotically normal with increasing mean and variance.

\begin{theorem}\label{thm:null}
Suppose that $X_1, \dots, X_n$ are i.i.d.~standard normal random variables. 
The asymptotic null distribution of the SLR statistic is obtained as
\begin{equation}\label{eq:limitnull}
\frac{\lambda_n^{\rm split} + \frac{m_0}{m_1}\log \log n}{2\sqrt{\frac{m_0}{m_1}\log \log n}}
\stackrel{\sf d}{\longrightarrow}
N(0,1).
\end{equation}
\end{theorem}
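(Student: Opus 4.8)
The plan is to exploit the conditional independence $D_0\indep D_1$. Conditionally on $D_1$ the pair $(\widehat p_{n,1},\widehat t_{n,1})$ is fixed and $X_{1,0},\dots,X_{n_0,0}$ are i.i.d.\ standard normal, so
\[
\lambda_n^{\rm split}=2\sum_{i=1}^{n_0}\log\bigl(1+\widehat\eta_{n,1}W_{i,0}(\widehat t_{n,1})\bigr)
\]
is, given $D_1$, a sum of $n_0$ i.i.d.\ terms $T_i:=2\log(1+\widehat\eta_{n,1}W_{i,0}(\widehat t_{n,1}))$ with conditional mean $\E_{H_0}[T_i\mid D_1]=-2\,\KL(\P_0\,\|\,\P_{\widehat p_{n,1},\widehat t_{n,1}})$, where $\P_0=N(0,1)$. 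Three ingredients are needed: (i) a sharp description of the fit $(\widehat\eta_{n,1},\widehat t_{n,1})$ produced by $D_1$; (ii) sharp expansions of the conditional mean and variance of $T_i$ in terms of that fit; and (iii) a conditional central limit theorem for $\sum_{i\le n_0}T_i$. The last step is to integrate out $D_1$, which is routine since the limit $N(0,1)$ does not depend on $D_1$ (convergence of conditional characteristic functions in probability, followed by dominated convergence).

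For (i) I would run, on the half-sample $D_1$ of size $n_1$, the analysis behind Propositions~\ref{prop:theoremBC}--\ref{prop:theoremLS}. Proposition~\ref{prop:theoremLS} gives $\lambda_{n,1}=\log\log n_1+O_\P(1)=\log\log n+O_\P(1)$ (using $\log\log n_1=\log\log n+o(1)$), and the ``moreover'' part of Proposition~\ref{prop:theoremBC} places $|\widehat t_{n,1}|$ in $A_{2,n}=[c_{1,n},c_{2,n}]$ with probability tending to one. Following the quadratic-expansion argument in the proof of Proposition~\ref{prop:theoremLS}, the maximizer satisfies $\widehat\eta_{n,1}=S_{n,1}(\widehat t_{n,1})/(\sqrt{n_1}\,V_{n,1}(\widehat t_{n,1}))\,(1+o_\P(1))$ and hence
\[
n_1\widehat\eta_{n,1}^{\,2}=\frac{M_{n,1}^2}{V_{n,1}(\widehat t_{n,1})^2}\bigl(1+o_\P(1)\bigr)=\log\log n+o_\P\!\bigl(\sqrt{\log\log n}\bigr),
\]
once one knows $M_{n,1}^2=\log\log n+O_\P(1)$ and $V_{n,1}(\widehat t_{n,1})=1+o_\P\bigl((\log\log n)^{-1/2}\bigr)$; in words, the fitted mixture is a shrinking perturbation of $\P_0$ with weight $\widehat\eta_{n,1}=\bigl((\log\log n)/n_1\bigr)^{1/2}(1+o_\P(1))$.

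For (ii) I would start from the exact identities $\KL(\P_0\,\|\,\P_{\widehat p_{n,1},\widehat t_{n,1}})=-\E_{H_0}[\log(1+\widehat\eta_{n,1}W_{i,0}(\widehat t_{n,1}))\mid D_1]$ and $\mathrm{Var}_{H_0}(T_i\mid D_1)=4\,\mathrm{Var}_{H_0}(\log(1+\widehat\eta_{n,1}W_{i,0}(\widehat t_{n,1}))\mid D_1)$, and Taylor-expand $\log(1+u)$ about $u=0$. Since $\E_{H_0}[W_{i,0}(t)^2]=1-e^{-t^2}\to1$ for $|t|\ge c_{1,n}$, the leading terms are $\tfrac12\widehat\eta_{n,1}^{\,2}$ and $\widehat\eta_{n,1}^{\,2}$ respectively; multiplying by $2n_0$ and $4n_0$ and using (i) with $n_0/n_1\to m_0/m_1$ yields $\E[\lambda_n^{\rm split}\mid D_1]=-\tfrac{m_0}{m_1}\log\log n+o_\P(\sqrt{\log\log n})$ and $\mathrm{Var}(\lambda_n^{\rm split}\mid D_1)=4\tfrac{m_0}{m_1}\log\log n\,(1+o_\P(1))$, i.e.\ the conditional variance is asymptotically $-4$ times the conditional mean. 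For (iii), on a ``good'' event for $D_1$ I would verify the Lindeberg condition for the array $\{T_i\}_{i\le n_0}$: the Lindeberg threshold is of order $\sqrt{\log\log n}$, whereas $|T_i|$ can exceed that level only when $X_{i,0}$ is essentially as large as the maximum of $n$ standard normal variables, an event of probability $O(n^{-1})$ up to polylogarithmic factors, so the Lindeberg remainder vanishes. Slutsky's lemma together with the de-conditioning step then produces \eqref{eq:limitnull}.

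The main obstacle is the precision demanded in (i)--(ii): the error budget is $o_\P(\sqrt{\log\log n})$ against a main term of size $\log\log n$. The difficulty is that $\widehat t_{n,1}$ may be as large as $\asymp\sqrt{\log n}$, and at those scales the variables $W_{i,k}(\widehat t_{n,1})$ are extremely heavy-tailed---their second and higher moments are governed by values of $X$ exceeding $\sqrt{2\log n}$, which are never realized in a sample of size $n$. Consequently $V_{n,k}(\widehat t_{n,1})$, the cubic remainder of the Taylor expansion, and the upper tail of $T_i$ must all be controlled by truncating the observations at the level of the maximal order statistic, $\asymp\sqrt{2\log n}$, showing that the truncated parts obey the required law of large numbers and bounding the discarded parts by $o_\P(\sqrt{\log\log n})$ via Markov's inequality. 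In particular one needs $V_{n,1}(\widehat t_{n,1})=1+o_\P((\log\log n)^{-1/2})$ uniformly over $\widehat t_{n,1}\in A_{2,n}$, which requires sharpening the moment estimates of \citet{10015228522} and \citet{MR2058122}. Once these are available, the Kullback--Leibler expansion, the variance asymptotics, the conditional Lindeberg check, and the passage from conditional to unconditional convergence are all comparatively routine.
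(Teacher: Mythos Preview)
Your proposal is correct and follows essentially the same architecture as the paper's proof. Both arguments hinge on the same four ingredients: (a) localizing $|\widehat t_{n,1}|$ to the window $A_{2,n}$ via Proposition~\ref{prop:theoremBC}; (b) a second-order Taylor expansion of $2L_{n,0}(\widehat\eta_{n,1},\widehat t_{n,1})$, justified by the uniform bound $\sup_{|t|\in A_{2,n_1}}\max_i|\widehat\eta_{n,1}(t)W_{i,0}(t)|=O_\P((\log n)^{-1})$; (c) the identification $\sqrt{n_1}\,\widehat\eta_{n,1}=\{1+o_\P(\epsilon_{2,n})\}S_{n,1}(\widehat t_{n,1})_+$ together with $S_{n,1}(\widehat t_{n,1})_+^2=\log\log n+O_\P(1)$; and (d) a central limit theorem for the linear part, which in your language is the conditional CLT for $\sum_i T_i$ and in the paper's language is the statement $S_{n,0}(\widehat t_{n,1})\stackrel{\sf d}{\to}N(0,1)$. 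The paper packages (b)--(c) as the explicit decomposition
\[
\lambda_n^{\rm split}=2\sqrt{\text{\ss}}\,S_{n,1}(\widehat t_{n,1})_+\,S_{n,0}(\widehat t_{n,1})-\text{\ss}\,S_{n,1}(\widehat t_{n,1})_+^2+o_\P(1),
\]
which is exactly your conditional mean/variance computation rewritten.

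The only genuine methodological difference is in step (d). You verify Lindeberg's condition for the conditional i.i.d.\ array $\{T_i\}$ and then de-condition; the paper instead uses the Hungarian construction to couple $S_{n,0}(\cdot)$ to a stationary Gaussian process $\widetilde S_0(\cdot)$ uniformly over $A_{2,n}$, whence $\widetilde S_0(\widehat t_{n,1})\sim N(0,1)$ exactly by independence of $\widehat t_{n,1}$ and $D_0$ (Lemma~\ref{lem:LS1}). Your route is more elementary for Theorem~\ref{thm:null} in isolation; the paper's route pays off later, since the uniform Gaussian coupling is reused throughout Sections~\ref{sec:4}--\ref{sec:5} (e.g., in Lemma~\ref{lem:azaisplus} and in the power analysis under $H_{1,n}^\#$), where a pointwise Lindeberg argument would not suffice. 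Finally, the uniform estimate $V_{n,k}(t)=1+o_\P((\log\log n)^{-1})$ on $A_{2,n}$ that you identify as the main technical hurdle is not new: it is Lemma~3(2) of \citet{MR2058122}, reproved here as Lemma~\ref{lem:LS2}(\ref{lem:LS2b}), so no sharpening beyond the existing literature is required.
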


\begin{proof}[Proof of Theorem~\ref{thm:null}]
From the notations in Table~\ref{tab:1}, recall that 
\[
A_{2,n}:=\Big[2\sqrt{\smash[b]\log_{(3)} n}, \sqrt{(\log n)/2}-2\sqrt{\smash[b]\log_{(2)} n}\,\Big].
\]
Now, uniformly over $|t|\in A_{2,n_1}$, the split likelihood function can be quadratically approximated:
\begin{align*}\yestag\label{eq:haolei}
2L_{n,0}(\widehat\eta_{n,1}(t),t)
=\;&2\sum_{i=1}^{n_0}\log\Big\{1+\widehat\eta_{n,1}(t) W_{i,0}(t)\Big\} \\
(\text{by Lemma~\ref{lem:LS2}(\ref{lem:LS2a})})~~~
=\;&2\widehat \eta_{n,1}(t)\sum_{i=1}^{n_0}W_{i,0}(t)
 -\big\{1+O_{\P}(\epsilon_{1,n})\big\}\widehat\eta_{n,1}(t)^2\sum_{i=1}^{n_0}W_{i,0}(t)^2 \\
=\;&2\sqrt{n_0}\widehat\eta_{n,1}(t) S_{n,0}(t)
 -\big\{1+O_{\P}(\epsilon_{1,n})\big\}n_0\widehat\eta_{n,1}(t)^2 V_{n,0}(t) \\
(\text{by Lemma~\ref{lem:LS2}(\ref{lem:LS2b})})~~~
=\;&2\sqrt{n_0}\widehat\eta_{n,1}(t) S_{n,0}(t)
 -\big\{1+o_{\P}(\epsilon_{2,n})\big\}n_0\widehat\eta_{n,1}(t)^2 \\
=\;&\sqrt{n_0}\widehat\eta_{n,1}(t) \Big[2S_{n,0}(t) 
 -\big\{1+o_{\P}(\epsilon_{2,n})\big\}\sqrt{n_0}\widehat\eta_{n,1}(t)\Big]\\
(\text{by Lemma~\ref{lem:LS2}(\ref{lem:LS2c})})~~~
=\;&\big\{1+o_{\P}(\epsilon_{2,n})\big\}\sqrt{\text{\ss}}S_{n,1}(t)_{+} \Big[2S_{n,0}(t) 
 -\big\{1+o_{\P}(\epsilon_{2,n})\big\}\sqrt{\text{\ss}}S_{n,1}(t)_{+}\Big],
\end{align*}
where $\text{\ss}:={m_0}/{m_1}$.
As $|\widehat t_{n,1}|\in A_{2,n_1}$ with probability tending to 1 (Lemma~\ref{lem:LS3}(\ref{lem:LS3a})), plugging $\widehat t_{n,1}$ into \eqref{eq:haolei} yields
\begin{align*}\yestag\label{eq:split-asymp}
\lambda_n^{\rm split}
=\;&2L_{n,0}(\widehat\eta_{n,1}(\widehat t_{n,1}),\widehat t_{n,1})\\
(\text{by Lemma~\ref{lem:LS3}(\ref{lem:LS3a})})~~~
=\;&\big\{1+o_{\P}(\epsilon_{2,n})\big\}\sqrt{\text{\ss}}S_{n,1}(\widehat t_{n,1})_{+} \Big[2S_{n,0}(\widehat t_{n,1}) 
-\big\{1+o_{\P}(\epsilon_{2,n})\big\}\sqrt{\text{\ss}}S_{n,1}(\widehat t_{n,1})_{+}\Big].
\end{align*}
Lemma~\ref{lem:LS3}(\ref{lem:LS3b}) gives, with probability tending to 1,
\begin{align*}\yestag\label{eq:erci}
S_{n,1}(\widehat t_{n,1})_{+}=S_{n,1}(\widehat t_{n,1})
               &=M_{n,1}+o_{\P}(\epsilon_{2,n_1}^{1/2})\\
(\text{by Proposition~\ref{prop:theoremBC}})~~~
               &=\sqrt{\log_{(2)}n_1}+O_{\P}(\epsilon_{2,n_1}^{1/2}).
\end{align*}
Furthermore, combining Equations~\eqref{eq:split-asymp} and~\eqref{eq:erci} and the fact
$S_{n,0}(\widehat t_{n,1})\stackrel{\sf d}{\longrightarrow} N(0,1)$ (Lemma~\ref{lem:LS1}), we deduce
\begin{align*}\yestag\label{eq:fenjie1}
\lambda_n^{\rm split}
=\;&2\sqrt{\text{\ss}}S_{n,1}(\widehat t_{n,1})_{+} S_{n,0}(\widehat t_{n,1}) 
 -\Big\{\sqrt{\text{\ss}}S_{n,1}(\widehat t_{n,1})_{+}\Big\}^2+o_{\P}(1).
\end{align*}
Therefore, noticing the fact 
$
S_{n,1}(\widehat t_{n,1})_{+}\stackrel{\sf p}{\longrightarrow}\infty
$
(Equation~\eqref{eq:erci}), we have
\[
\frac{\lambda_n^{\rm split} + \Big\{\sqrt{\text{\ss}}S_{n,1}(\widehat t_{n,1})_{+}\Big\}^2}{2\sqrt{\text{\ss}}S_{n,1}(\widehat t_{n,1})_{+}}=S_{n,0}(\widehat t_{n,1})+o_{\P}(1)\stackrel{\sf d}{\longrightarrow}
N(0,1),
\]
where the last step is by Lemma~\ref{lem:LS1}. Using Equation~\eqref{eq:erci} once again, we deduce
\begin{align*}
\{S_{n,1}(\widehat t_{n,1})_{+}\}^2=\log_{(2)}n_1+O_{\P}(1),
\end{align*}
and thus
\[
\frac{\lambda_n^{\rm split} + \text{\ss}\log \log n_1}{2\sqrt{\text{\ss}\log \log n_1}}
\stackrel{\sf d}{\longrightarrow}
N(0,1).
\]
Simple calculation yields the desired result \eqref{eq:limitnull}.
\end{proof}

\begin{remark}
As a direct corollary of Theorem~\ref{thm:null}, if we adopt the asymptotic critical point from the asymptotic null distribution \eqref{eq:limitnull}, namely, 
\begin{equation}\label{eq:def_csplit}
c_{n,\alpha}^{\rm split}:= 2\sqrt{\frac{m_0}{m_1}\log \log n}\times
\Phi^{-1}({1-\alpha})- \frac{m_0}{m_1}\log \log n,
\end{equation}
where $\Phi^{-1}(\cdot)$ denotes the quantile function of the standard normal distribution,
then the SLRT will have the asymptotic size of $\alpha$:
\[
\lim_{n\to\infty}\P_{H_0}\{\lambda_n^{\rm split} > c_{n,\alpha}^{\rm split}\}=\alpha.
\]
\end{remark}

\begin{remark}
Roughly speaking, 
\[
\frac{1}{2}\lambda_n^{\rm split}\approx_{\sf d} N\Big(-\frac{m_0}{2m_1}\log \log n,\frac{m_0}{m_1}\log \log n\Big),
\]
where a random variable following the right-hand side distribution has an exponential moment of exactly~$1$. This partially explains our later observation that the Markov inequality used in universal inference of \citet{MR4242731} (``poor man's Chernoff bound'') is not extremely conservative for the homogeneity problem. 
\end{remark}

The main reason why $\lambda_n^{\rm split}$ tends to $-\infty$ with rate $\log\log n$ (compare that $\lambda_n$ tends to $+\infty$ with rate $\log\log n$) is the next lemma, which shows that $S_{n,0}(\widehat t_{n,1})$ is asymptotically standard normal rather than tends to infinity in probability as $S_{n,1}(\widehat t_{n,1})$.

\begin{lemma}\label{lem:LS1}
$S_{n,0}(\widehat t_{n,1})\stackrel{\sf d}{\longrightarrow} N(0,1)$.
\end{lemma}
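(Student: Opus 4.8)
The plan is to exploit the independence between $D_0$ and $D_1$, treating $\widehat t_{n,1}$ as a ``frozen'' parameter once we condition on $D_1$. Conditionally on $D_1$, the value $\widehat t_{n,1}=:t$ is a deterministic real number, and $S_{n,0}(t)=n_0^{-1/2}\sum_{i=1}^{n_0}W_{i,0}(t)$ is a normalized sum of i.i.d.\ random variables depending only on $D_0$. Since $X_{i,0}$ are i.i.d.\ standard normal under $H_0$, a direct moment computation shows $\E[W_{i,0}(t)]=\E[Z_{i,0}(t)]e^{-t^2/2}=0$ (because $\E[e^{tX-t^2/2}]=1$) and $\E[W_{i,0}(t)^2]=(e^{t^2}-1)e^{-t^2}=1-e^{-t^2}$, which lies in $[0,1)$ for every fixed $t$. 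So for each fixed $t$, the classical Lindeberg--L\'evy CLT would give $S_{n,0}(t)\stackrel{\sf d}{\to}N(0,1-e^{-t^2})$, which is not quite $N(0,1)$ — the resolution is that $|\widehat t_{n,1}|\to\infty$ in probability (it lies in $A_{2,n_1}$ with probability tending to one by Lemma~\ref{lem:LS3}(\ref{lem:LS3a}), hence is at least of order $\sqrt{\log_{(3)}n}\to\infty$), so the variance $1-e^{-\widehat t_{n,1}^2}\to 1$.

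The key steps, in order. First, I would argue that it suffices to prove the conditional statement: for (almost) every realization of the sequence $D_1$ along which $|\widehat t_{n,1}|\to\infty$, the conditional law of $S_{n,0}(\widehat t_{n,1})$ given $D_1$ converges to $N(0,1)$; then, since $|\widehat t_{n,1}|\to\infty$ in probability by Lemma~\ref{lem:LS3}(\ref{lem:LS3a}), a subsequence/dominated-convergence argument on characteristic functions upgrades this to the unconditional weak convergence. Second, for the conditional CLT with a drifting parameter $t=t_n\to\infty$, I would verify a Lindeberg (or Lyapunov) condition for the triangular array $W_{i,0}(t_n)/\sqrt{n_0\,(1-e^{-t_n^2})}$, $i=1,\dots,n_0$: since the summands are i.i.d.\ within each row, it reduces to checking $\E[W_{1,0}(t_n)^2\mathbf 1\{|W_{1,0}(t_n)|>\varepsilon\sqrt{n_0}\}]\to 0$, or more crudely a Lyapunov bound $n_0^{-\delta/2}\E|W_{1,0}(t_n)|^{2+\delta}\to 0$ after normalization. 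The moments $\E|W_{1,0}(t_n)|^{2+\delta}$ can be computed in closed form from the Gaussian MGF and grow only like a fixed power of $e^{t_n^2}$; since $t_n$ grows at most like $\sqrt{(\log n_1)/2}$ (so $e^{t_n^2}\lesssim n_1^{1/2}$), these moments are $n_1^{O(1)}$, and dividing by $n_0^{\delta/2}=n_1^{\Theta(1)}$ — wait, this needs care — one checks the exponents work out because the relevant Lyapunov ratio is $\E|W|^{2+\delta}\big/\big(n_0^{\delta/2}(1-e^{-t_n^2})^{1+\delta/2}\big)$ and the numerator's polynomial-in-$n_1$ growth is beaten by $n_0^{\delta/2}$ for $\delta$ small and $n_1$ large. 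Third, combine: the Lindeberg--Feller CLT then yields conditional asymptotic normality with variance $1$ (after normalizing by the true conditional variance $1-e^{-t_n^2}\to 1$), uniformly over the relevant range of $t_n$; de-condition as above.

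The main obstacle I anticipate is the interplay between the growth rate of $\widehat t_{n,1}$ and the moment bounds in the Lindeberg/Lyapunov check: since $\widehat t_{n,1}$ can be as large as $\sqrt{(\log n_1)/2}$, the per-summand moments $\E|W_{1,0}(t)|^{2+\delta}$ blow up polynomially in $n_1$, and one must confirm that the normalization by $n_0^{\delta/2}$ (together with the fact that the variance stays bounded away from $0$, indeed tends to $1$) still forces the Lyapunov ratio to vanish — this is really a bookkeeping exercise with the exponent $e^{t^2}\le n_1^{1/2}$ versus $n_0^{\delta/2}$, and choosing $\delta$ appropriately. A secondary technical point is making the de-conditioning rigorous: because $\widehat t_{n,1}$ is random, I would pass to characteristic functions, write $\E[e^{isS_{n,0}(\widehat t_{n,1})}]=\E\big[\E[e^{isS_{n,0}(t)}]\big|_{t=\widehat t_{n,1}}\big]$, show the inner conditional characteristic function converges to $e^{-s^2/2}$ uniformly over $|t|\in A_{2,n_1}$ (which the uniform Lindeberg bound provides), and invoke bounded convergence together with $\P(|\widehat t_{n,1}|\in A_{2,n_1})\to 1$ to conclude.
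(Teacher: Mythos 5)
Your conditioning strategy is a genuinely different route from the paper's. The paper writes $S_{n,0}(t)=\int e^{tx-t^2}\,\d B_{n,0}(\Phi(x))$, invokes the Hungarian (KMT) strong approximation to replace $B_{n,0}$ by a Brownian bridge $B_0$, defines an augmented process $\widetilde S_0(t)=S_0(t)+\widetilde X e^{-t^2/2}$ that is \emph{exactly} standard normal at each fixed $t$, and then cites the uniform approximation $S_{n,0}(t)-\widetilde S_0(t)=o_\P(\epsilon_{2,n_0}^{1/2})$ over $t\in A_{2,n_0}$ from \citet[Eq.~(39)]{10015228522}. Independence of $\widehat t_{n,1}$ from the process $S_0$ then gives exact normality of $\widetilde S_0(\widehat t_{n,1})$ by Fubini. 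Your alternative, conditioning on $D_1$ and running a triangular-array CLT on the row $W_{i,0}(\widehat t_{n,1})$ with a drifting parameter, is a more elementary framework and avoids KMT, but to succeed it must carry the entire burden of uniformity that the paper's approach outsources to the strong approximation.

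The concrete gap is in your Lyapunov check, which as stated fails. You have $\E|W_{1,0}(t)|^{2+\delta}\asymp e^{t^2\delta(2+\delta)/2}$; with $t^2$ as large as roughly $(\log n_1)/2$ this is of order $n_1^{\delta(2+\delta)/4}$, while the denominator contributes only $n_0^{\delta/2}\asymp n_1^{\delta/2}$. The ratio is of order $n_1^{\delta(2+\delta)/4-\delta/2}=n_1^{\delta^2/4}\to\infty$ for every fixed $\delta>0$; making $\delta$ small shrinks the exponent but never makes it negative, so "the exponents work out" is incorrect. What saves the day is (a) switching to the Lindeberg form, and (b) using the precise upper endpoint $\sqrt{(\log n)/2}-2\sqrt{\smash[b]{\log_{(2)}n}}$ of $A_{2,n_1}$ rather than $\sqrt{(\log n)/2}$: the truncation computation reduces to $1-\Phi(x_0-2t)$ with $x_0-2t\approx 4\sqrt{\smash[b]{\log_{(2)}n_1}}\to\infty$, which tends to $0$ only at rate $\exp\{-c\log_{(2)}n_1\}$ — at the boundary $t=\sqrt{(\log n)/2}$ it would be $\Theta(1)$ and Lindeberg would fail. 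So the argument works, but it hinges exactly on the $2\sqrt{\smash[b]{\log_{(2)}n}}$ safety margin built into $A_{2,n}$; you flag that "this needs care" but then conclude incorrectly that the crude Lyapunov bound closes the gap. You need to carry out the Lindeberg computation explicitly, uniformly over $A_{2,n_1}$, before the de-conditioning step (which is otherwise sound) can be applied.
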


\begin{proof}[Proof of Lemma~\ref{lem:LS1}]
Notice that
\begin{align*}
S_{n,0}(t) 
&= n_0^{-1/2}\sum_{i=1}^{n_0}(e^{tX_i-t^2} -e^{-t^2/2}) \\
&= n_0^{1/2}\Big[\int_{-\infty}^{\infty} e^{tx-t^2}\d F_{n,0}(x) - \int_{-\infty}^{\infty} e^{tx-t^2}\d \Phi(x)\Big] \\
&= \int_{-\infty}^{\infty} e^{tx-t^2} \d B_{n,0}(\Phi(x)),
\end{align*}
with $\Phi(\cdot)$ the cumulative distribution function of the standard normal distribution,
\[
F_{n,0}(x):=n_0^{-1}\sum_{i=1}^{n_0}\mathds{1}\{X_{i,0}\le x\}~~~\text{and}~~~
B_{n,0}(u):=n_0^{1/2}\Big[n_0^{-1}\sum_{i=1}^{n_0}\mathds{1}\{\Phi(X_{i,0})\le u\} - u\Big].
\]
By the Hungarian construction \citep{MR0375412, MR0402883}, there exists a Brownian Bridge $B_0$ such that
\begin{equation}\label{eq:Hung}
\sup_{0\le u\le 1}|B_{n,0}(u)-B_0(u)|=O_{\P}(n_1^{-1/2}\log n_1)
\end{equation}
on a suitable probability space.\footnote{With slight abuse of notation, we will still use $\P$ to denote the extension of $\P$ by the Hungarian construction.}
Define $S_0(t)$ to be the process
\begin{equation}\label{eq:defS_0}
S_0(t):=\int_{-\infty}^{\infty} e^{tx-t^2}\d B_0(\Phi(x)).
\end{equation}
Moreover, define $\widetilde S_0(t)$ as $S_0(t)+\widetilde X e^{-t^2/2}$ with $\widetilde X$ standard normal and independent of the system.
Then $S_0(t)$ and $\widetilde S_0(t)$ are Gaussian processes with covariance functions
\[
\rho(s,t)=\exp\Big\{-\frac{(s-t)^2}{2}\Big\}-\exp\Big\{-\frac{s^2}{2}-\frac{t^2}{2}\Big\}
\quad\text{and}\quad
\widetilde\rho(s,t)=\exp\Big\{-\frac{(s-t)^2}{2}\Big\},
\]
respectively. Intuitively, by Equation~\eqref{eq:Hung}, $S_{n,0}(t)$, $S_0(t)$, and $\widetilde S_0(t)$ differ small for all appropriate $t$.
Furthermore, following Equation~(39) of \citet{10015228522}, we have, uniformly for~$t\in A_{2,n_0}$,
\begin{equation}\label{eq:jianjie}
S_{n,0}(t) - \widetilde S_0(t) = o_{\P}(\epsilon_{2,n_0}^{1/2}).
\end{equation}
Since $\widehat t_{n,1}$ is a random variable independent of $S_0$, we have
\[
\P(\widetilde S_0(\widehat t_{n,1})\le y)
=\int \P(\widetilde S_0(t)\le y)\d\P_{\widehat t_{n,1}}(t)
=\int \Phi(y)\d\P_{\widehat t_{n,1}}(t)
=\Phi(y).
\]
Then combining Equation~\eqref{eq:jianjie} and the fact that $|\widehat t_{n,1}|\in A_{2,n_0}$ with probability tending to 1 (Lemma~\ref{lem:LS3}(\ref{lem:LS3e})) concludes the proof. 
\end{proof}

The following lemma was used to prove Theorem~\ref{thm:null}.  It forms the basis for why the split likelihood ratio statistic is stochastically equivalent to its quadratic approximation.

\begin{lemma}\label{lem:LS2}
The following statements hold:
\begin{enumerate}[(i)]
\item\label{lem:LS2a}
$\sup_{|t|\in A_{2,n_1}}\max_{1\le i\le n_0} |\widehat\eta_{n,1}(t)W_{i,0}(t)| = O_{\P}\{(\log n_0)^{-1}\}$;
\item\label{lem:LS2b}
$V_{n,0}(t):=n_0^{-1}\sum_{i=1}^{n_0}W_{i,0}(t)^2=1+o_{\P}\{(\log\log n_0)^{-1}\}$, when $|t|\in A_{2,n_1}$; 
\item \label{lem:LS2c} $\sqrt{n_1}\widehat \eta_{n,1}(t) = \big\{1+o_{\P}(\epsilon_{2,n_1})\big\} S_{n,1}(t)_{+}$ uniformly over $|t|\in A_{2,n_1}$.

\end{enumerate}
\end{lemma}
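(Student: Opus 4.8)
The plan is to prove the three parts of Lemma~\ref{lem:LS2} in sequence, each time exploiting the range $|t|\in A_{2,n_1}=[2\sqrt{\log_{(3)}n_1},\sqrt{(\log n_1)/2}-2\sqrt{\log_{(2)}n_1}]$, on which the $W_{i,k}(t)$ have enough moment control that $\widehat\eta_{n,1}(t)$ is both small and well-approximated by a score-over-information ratio. Throughout I would recall that $W_{i,k}(t)=(e^{tX_{i,k}-t^2/2}-1)e^{-t^2/2}$ has mean $0$, variance $\E W_{i,k}(t)^2 = 1-e^{-t^2}$, and more generally $\E e^{\lambda(tX-t^2/2)}$ is a Gaussian moment generating function, so all moments of $e^{tX-t^2/2}$ are explicit. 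The key quantitative fact is that for $|t|$ in this window, $t^2$ is at most $(\log n_1)/2 - o(\log n_1)$, so $e^{t^2}\le n_1^{1/2-o(1)}$, giving slack against union bounds over $n_0\asymp n_1$ indices.

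For part~(\ref{lem:LS2a}), I would first bound $\max_{1\le i\le n_0}|W_{i,0}(t)|$ uniformly in $|t|\in A_{2,n_1}$. Since $W_{i,0}(t)=e^{tX_{i,0}-t^2}-e^{-t^2/2}$ and the dominant term is $e^{tX_{i,0}-t^2}$, a union bound using the Gaussian tail of $X_{i,0}$ shows $\max_i |W_{i,0}(t)|=O_\P(e^{|t|\sqrt{2\log n_0}-t^2})$; optimizing the exponent $|t|\sqrt{2\log n_0}-t^2$ over the window — or just noting it is maximized near the right endpoint — gives $\max_i|W_{i,0}(t)| = O_\P(\sqrt{n_0}\,\epsilon_{1,n_0})$ up to logarithmic factors, with a supremum over $t$ handled by a chaining/discretization argument on the smooth process $t\mapsto W_{i,0}(t)$ (as in \citet{10015228522}). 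Then I would use part~(\ref{lem:LS2c}) — which I prove independently below — to write $\widehat\eta_{n,1}(t) = \{1+o_\P(\epsilon_{2,n_1})\}n_1^{-1/2}S_{n,1}(t)_+$, and since $S_{n,1}(t)$ is $O_\P(\sqrt{\log_{(2)}n_1})$ uniformly (Proposition~\ref{prop:theoremBC}), $\widehat\eta_{n,1}(t)=O_\P(n_1^{-1/2}\sqrt{\log_{(2)}n_1})$. Multiplying the two bounds yields the claimed $O_\P((\log n_0)^{-1})$, with the logarithmic factors from the max-bound absorbed — this is where I'd be careful that the window's right endpoint is cut back by $2\sqrt{\log_{(2)}n_1}$ precisely to kill those factors.

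For part~(\ref{lem:LS2b}), I would compute $\E V_{n,0}(t) = 1-e^{-t^2}$, which is $1-o((\log\log n_0)^{-1})$ on the window since $t^2\gtrsim \log_{(3)}n_1\to\infty$; so the bias is negligible. For the fluctuation $V_{n,0}(t)-\E V_{n,0}(t) = n_0^{-1}\sum_i\{W_{i,0}(t)^2 - \E W_{i,0}(t)^2\}$, I would bound the variance of $W_{i,0}(t)^2$ using the explicit fourth moment $\E W_{i,0}(t)^4 = e^{6t^2}-\ldots$ (a polynomial in $e^{t^2}$), which is $n_1^{O(1)}$ but with exponent controlled; combined with a truncation at level $\max_i|W_{i,0}(t)|^2$ and Bernstein's inequality, plus a discretization over a fine grid of $t$ (the process is Lipschitz in $t$ with a polynomial-in-$n$ constant, so $\mathrm{poly}(n)$ grid points suffice), I get the uniform $o_\P(\epsilon_{2,n_0})$ control. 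The main obstacle here, and the one I'd flag, is that the raw fourth moment $e^{6t^2}$ at the right endpoint $t^2\approx(\log n_1)/2$ is of order $n_1^{3}$, far larger than $n_0$; the fix is that one does not need a CLT-type bound but only $V_{n,0}(t)=1+o_\P(\epsilon_{2,n_0})$, which via truncation at $\max_i|W_{i,0}|^2=O_\P(n_0\epsilon_{1,n_0}^2)$ and the exact second moment reduces the effective variance of the sum to $n_0^{-1}\cdot O(\max_i|W_{i,0}|^2\cdot\E W_{i,0}^2) = O_\P(\epsilon_{1,n_0}^2)=o(\epsilon_{2,n_0}^2)$ — so the endpoint truncation is doing the real work, exactly as it does in \citet{MR2058122}.

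For part~(\ref{lem:LS2c}), the score equation for $\widehat\eta_{n,1}(t)$ is $\sum_{i=1}^{n_1} W_{i,1}(t)/\{1+\widehat\eta_{n,1}(t)W_{i,1}(t)\}=0$ whenever the maximizer is interior (i.e.\ $S_{n,1}(t)>0$ and $\widehat\eta_{n,1}(t)<e^{t^2/2}$); when $S_{n,1}(t)\le 0$ the concavity in $\eta$ forces $\widehat\eta_{n,1}(t)=0$, matching the $S_{n,1}(t)_+$ form. On the event $S_{n,1}(t)>0$, I would Taylor-expand $1/(1+\widehat\eta W_{i,1}) = 1 - \widehat\eta W_{i,1} + (\widehat\eta W_{i,1})^2/(1+\xi)$ inside the score equation, getting $0 = n_1^{1/2}S_{n,1}(t) - n_1\widehat\eta_{n,1}(t)V_{n,1}(t) + (\text{remainder})$; using $|\widehat\eta_{n,1}(t)W_{i,1}(t)|=O_\P((\log n_1)^{-1})$ from part~(\ref{lem:LS2a}) (applied with the roles of the two subsamples as needed, or its obvious in-sample analogue) to bound the remainder by $O_\P(\epsilon_{1,n_1})\cdot n_1\widehat\eta_{n,1}(t)V_{n,1}(t)$, and $V_{n,1}(t)=1+o_\P(\epsilon_{2,n_1})$ from part~(\ref{lem:LS2b}), solving gives $n_1^{1/2}\widehat\eta_{n,1}(t)=\{1+o_\P(\epsilon_{2,n_1})\}S_{n,1}(t)$. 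The uniformity in $|t|\in A_{2,n_1}$ comes for free because every bound invoked is already uniform on that set. The one subtlety to check is that the constraint $\widehat\eta_{n,1}(t)\le e^{t^2/2}$ is not active: since $\widehat\eta_{n,1}(t)\approx n_1^{-1/2}S_{n,1}(t)_+ = O_\P(n_1^{-1/2}\sqrt{\log_{(2)}n_1})$ while $e^{t^2/2}\ge \prod$ grows at least like a power of $\log_{(3)}n_1$, the interior solution is the true maximizer for all such $t$ with probability tending to one — so the boundary case genuinely only arises from $S_{n,1}(t)\le 0$, and is handled by the $(\cdot)_+$.
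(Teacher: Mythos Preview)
Your approach is sound and for parts~(\ref{lem:LS2a}) and~(\ref{lem:LS2c}) tracks the paper closely: the paper also bounds $\max_i|W_{i,0}(t)|$ via the extreme-value behavior of $X_{(n_0),0}$ (obtaining the sharp $n_0^{1/2}(\log n_1)^{-3}$, which is exactly where the cut-back by $2\sqrt{\log_{(2)}n_1}$ enters), and derives~(\ref{lem:LS2c}) from the quadratic expansion of $L_{n,1}(\widehat\eta_{n,1}(t),t)$ just as you do. One structural difference is that the paper avoids your mild circularity by citing Lemma~2 of \citet{MR2058122} directly for $\widehat\eta_{n,1}(t)=O_\P\{n_1^{-1/2}(\log n_1)^2\}$ --- a weaker bound than yours, but obtained without going through~(\ref{lem:LS2c}) --- whereas you invoke~(\ref{lem:LS2c}) (justified via the in-sample analogue) to get the tighter $O_\P(n_1^{-1/2}\sqrt{\log_{(2)}n_1})$. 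Both routes close, and your acknowledgment of the in-sample analogue is the right resolution.

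For part~(\ref{lem:LS2b}) you take a genuinely different route. The paper writes $V_{n,0}(t)$ as an integral against the empirical distribution, splits it into a deterministic $\Phi$-integral (giving $1-e^{-t^2}$ plus explicit tail corrections) and a stochastic $B_{n,0}$-integral, integrates the latter by parts, and controls the resulting boundary-weighted empirical process via the weighted approximations of \citet{MR815960}. Your Bernstein-plus-truncation-plus-discretization argument is more elementary and avoids that machinery; it does work, but the step you gloss over --- that truncating at the \emph{random} level $\max_i|W_{i,0}(t)|$ cannot be fed directly into a variance bound --- needs the standard fix of truncating at a deterministic $M_n$ (e.g.\ $Cn_0^{1/2}(\log n)^{-3}$), verifying the truncation bias $\E[W_{i,0}(t)^2\mathds{1}(|W_{i,0}(t)|>M_n)]=o(\epsilon_{2,n})$ via a Gaussian tail integral, and only then running Bernstein on the bounded summands. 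The paper's empirical-process route buys a cleaner one-shot decomposition; your route buys independence from the weighted-approximation literature at the cost of more explicit bookkeeping on the bias term and the grid.
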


\begin{proof}[Proof of Lemma~\ref{lem:LS2}]

Due to the symmetry, it suffices to prove all the results for $t\in A_{2,n_1}=
[2\sqrt{\smash[b]{\log_{(3)} n_1}}, \sqrt{(\log n_1)/2}-2\sqrt{\smash[b]{\log_{(2)} n_1}}]$.
\begin{enumerate}[(i)]
\item Lemma~2 in \citet{MR2058122} proves that
\begin{equation}\label{eq:renzhen}
\sup_{|t|\in A_{2,n_1}}\widehat\eta_{n,1}(t)=O_{\P}\{n_1^{-1/2}(\log n_1)^2\}.
\end{equation}
In addition, we have
\begin{align*}
\sup_{|t|\in A_{2,n_1}}\max_{1\le i\le n_0}|W_{i,0}(t)|
&\le \sup_{t\in A_{2,n_1}} \exp(t X_{(n_0),0}-t^2)+1\\
&=   \sup_{t\in A_{2,n_1}} \exp\{(X_{(n_0),0}/2)^2-(X_{(n_0),0}/2-t)^2\}+1,
\end{align*}
where $X_{(1),k},\dots,X_{(n_k),k}$ are the ascending order statistics of $X_{1,k},\dots,X_{n_k,k}$, $k=0,1$, and by Theorem~1.5.3 in \citet{MR691492},
\[
\lim_{C\to+\infty}\liminf_{n\to\infty}\P\Big(X_{(n_0),0}\in
\Big[\sqrt{2\log n_0}-\frac{\log\log n_0+C}{2\sqrt{2\log n_0}},
     \sqrt{2\log n_0}-\frac{\log\log n_0-C}{2\sqrt{2\log n_0}}\Big]\Big)=1.
\]
Accordingly, with probability tending to 1,
\begin{align*}
X_{(n_0),0}/2&\le \sqrt{(\log n_0)/2},\\
~~~\text{and}~~~
X_{(n_0),0}/2-t&\ge \sqrt{(\log n_0)/2}-\frac12-\Big(\sqrt{(\log n_1)/2}-2\sqrt{\log_{(2)} n_1}\,\Big)\\
               &\ge 2\sqrt{\log_{(2)} n_1} -1~~~\text{for}~t\in A_{2,n_1},
\end{align*}
and thus
\begin{align*}\yestag\label{eq:hhh}
\sup_{t\in A_{2,n_1}}\max_{1\le i\le n_0}|W_{i,0}(t)|
&\le \sup_{t\in A_{2,n_1}} \exp\{(X_{(n_0),0}/2)^2-(X_{(n_0),0}/2-t)^2\}+1\\
&\le \exp\{{(\log n_0)/2-3\log_{(2)}n_1}\}\\
&=   n_0^{1/2}(\log n_1)^{-3}.
\end{align*}
Combining \eqref{eq:renzhen} and \eqref{eq:hhh} concludes part (\ref{lem:LS2a}).

\item The proof is similar to the proof of Lemma~3(2) in \citet[pages 70--71]{MR2058122}.  The details are as follows: writing $w(x,t):=e^{tx-t^2}-e^{-t^2/2}$, 
$n_0^{-1}\sum_{i=1}^{n_0} W_{i,0}(t)^2$ can be expressed as
\[
\frac{1}{n_0} \sum_{i=1}^{n_0} W_{i,0}(t)^2 = \int_{x_{(1),0}}^{x_{(n_0),0}} w(x,t)^2\big[{\rm d}\Phi(x)+n_0^{-1/2}{\rm d}B_{n,0}\big(\Phi(x)\big)\big]:=\Delta_{\Phi}+\Delta_{\rm Csrg},
\]
where $x_{(1),0}$ and $x_{(n_0),0}$ is define such that $\Phi(x_{(1),0})=\Phi(X_{(1),0})/2$ and $\Phi(-x_{(n_0),0})=\Phi(-X_{(n_0),0})/2$, respectively. 
We first show that $\Delta_{\rm Csrg}=O_\P\{(\log n)^{-1}\}$ uniformly for $t\in A_{2,n_1}=[2\sqrt{\smash[b]{\log_{(3)} n_1}}, \linebreak \sqrt{(\log n_1)/2}-2\sqrt{\smash[b]{\log_{(2)} n_1}}]$. Using integration by parts, we obtain that
\begin{align*}\yestag
&|\Delta_{\rm Csrg}|
=\Bigg|n_0^{-1/2} w^2(x,t)B_{n,0}\big(\Phi(x)\big)\Big|_{x_{(1),0}}^{x_{(n_0),0}}
-n_0^{-1/2}\int_{x_{(1),0}}^{x_{(n_0),0}} B_{n,0}\big(\Phi(x)\big) {\rm d}w^2(x,t)\Bigg| \\
&\le n_0^{-1/2}\big(x_{(n_0),0}-x_{(1),0}\big)\sup_{x \in [x_{(1),0}, x_{(n_0),0}]}\Big[\big|B_{n,0}\big(\Phi(x)\big)\big| \times \Big\{\Big|\frac{\partial w(x,t)^2}{\partial x}\Big|+w(x,t)^2\Big\}\Big],
\end{align*}
Notice that $\P\{-n\Phi(x_{(1),0})\le x/2\}\to e^{x}$ and $\P\{n(\Phi(x_{(n_0),0})-1)\le x/2\}\to e^{x}$ for $x<0$ \citep[Example 1.7.9]{MR691492}, 
and then we can apply Theorem~2.1, Lemma~4.4.1 and Lemma~4.4.3 in \citet{MR815960} to obtain 
\[
\sup_{x \in [x_{(1),0}, x_{(n_0),0}]} \frac{|B_{n,0}(\Phi(x))|}{[\Phi(x)\{1-\Phi(x)\}]^{1/2}}=O_\P\{(\log_{(2)}n)^{1/2}\}
\]
and thus 
\(
\sup_{x \in [x_{(1),0}, x_{(n_0),0}]} e^{x^2/4}|B_{n,0}(\Phi(x))|=O_\P\{(\log_{(2)}n)^{1/2}\}.
\) 
Basic calculation yields $x_{(n_0),0}-x_{(1),0}=O_\P(\sqrt{\log n})$, $|\partial w(x,t)^2/\partial x|\le 2t(e^{2xt-2t^2}+1)$ and $w(x,t)^2\le 2(e^{2xt-2t^2}+1)$. 
Hence, the following upper bound for $\Delta_{\rm Csrg}$ holds:
\begin{align}
|\Delta_{\rm Csrg}|\le O_{\P}(n^{-1/2}(\log n)^{3/2})\sup_{x \in [x_{(1),0}, x_{(n_0),0}]}(e^{-x^2/4+2xt-2t^2}+1),
\end{align}
The supremum is reached at $x=x_{(n_0),0}=\sqrt{2\log n_0}+O_\P(\epsilon_{1,n}^{1/2})$ and $t=\sqrt{(\log n_1)/2}-2\sqrt{\smash[b]{\log_{(2)} n_1}}$. Therefore, with probability tending to 1, $e^{-x^2/4+2xt-2t^2}\le O_{\P}\{n^{1/2}(\log n)^{-4}\}$ and $|\Delta_{\rm Csrg}|=O_\P\{(\log n)^{-1}\}$.

Next, we consider $\Delta_{\Phi}$. Direct calculation shows that 
\begin{align*}
|1-\Delta_{\Phi}|
  &=|\Phi(2t-x_{(n_0),0})+\Phi(x_{(1),0}-2t)
          -e^{-t^2}[2\Phi(x_{(1),0}-t)-\Phi(x_{(1),0})] \\
  &\qquad +e^{-t^2}[2\Phi(x_{(n_0),0}-t)-\Phi(x_{(n_0),0})]| 
 \le \Phi(2t-x_{(n_0),0})+\Phi(x_{(1),0}-2t)+2e^{-t^2}.
\end{align*}
Here, $\Phi(2t-x_{(n_0),0})\le (\log n)^{-2}$ with probability going to 1 since $x_{(n_0),0}-2t\ge 2\sqrt{\smash[b]{\log_{(2)}n_0}}$ with probability going to 1,
$\Phi(x_{(1),0}-2t)\le n^{-1}$ with probability going to 1 since $x_{(1),0}-2t\le -\sqrt{\smash[b]{2\log n_0}}$ with probability going to 1,
and $e^{-t^2}\le (\log_{(2)} n)^{-2}$ since $t\ge \sqrt{\smash[b]{2\log_{(3)}n}}$.
Thus we have $n_0^{-1} \sum_{i=1}^{n_0} W_{i,0}(t)^2=1+o_\P\{(\log_{(2)} n)^{-1}\}$.

\item In view of the proof of Theorem 2 in \citet[Page~66]{MR2058122}, it holds that uniformly
for~$t\in A_{2,n_1}$,
\begin{align*}\yestag\label{eq:fenjie2}
    L_{n,1}(\widehat \eta_{n,1}(t),t)
=\;&2\widehat \eta_{n,1}(t)\sum_{i=1}^{n_1}W_{i,1}(t)
 -\big\{1+o_{\P}(\epsilon_{2,n_1})\big\}\widehat \eta_{n,1}(t)^2\sum_{i=1}^{n_1}W_{i,1}(t)^2 \\
=\;&2\sqrt{n_1}\widehat \eta_{n,1}(t) S_{n,1}(t)
 -\big\{1+o_{\P}(\epsilon_{2,n_1})\big\}n_1\widehat \eta_{n,1}(t)^2 V_{n,1}(t) \\
=\;&2\sqrt{n_1}\widehat \eta_{n,1}(t) S_{n,1}(t)
 -\big\{1+o_{\P}(\epsilon_{2,n_1})\big\}n_1\widehat \eta_{n,1}(t)^2.
\end{align*}
In view of Equation~\eqref{eq:fenjie2}, we obtain uniformly for $t\in A_{2,n_1}$,
\begin{align*}
\sqrt{n_1}\widehat \eta_{n,1}(t) = \big\{1+o_{\P}(\epsilon_{2,n_1})\big\} S_{n,1}(t)_{+}.
\yestag\label{eq:lateruse}
\end{align*}
We remark that $\widehat \eta_{n,1}(t)$ is indeed a feasible solution by noticing \eqref{eq:erci}. \qedhere
\end{enumerate}
\end{proof}

The next lemma proves that $\widehat t_{n,1}$, the MLE of the location parameter under the full model constructed from $D_1$, is asymptotically uniformly distributed over $A_{2,n_1}$ when the null hypothesis $H_0$ holds.

\begin{lemma}\label{lem:LS3}
The following statements hold:
\begin{enumerate}[(i)]
\item\label{lem:LS3a} $|\widehat t_{n,1}|\in A_{2,n_1}$ with probability tending to 1;
\item\label{lem:LS3b} $S_{n,1}(\widehat t_{n,1})=M_{n,1}+o_\P(\epsilon_{2,n_1}^{1/2})$;
\item\label{lem:LS3d} $|\widehat t_{n,1}|$ is asymptotically uniformly distributed over $A_{2,n_1}$;
\item\label{lem:LS3e} $|\widehat t_{n,1}|\in A_{2,n_0}$ with probability tending to 1. 
\end{enumerate}
\end{lemma}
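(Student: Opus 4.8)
The plan is to reduce all four parts to the extreme-value behavior of the process $S_{n,1}$ on the middle range $A_{2,n_1}=[c_{1,n_1},c_{2,n_1}]$, which is governed by Proposition~\ref{prop:theoremBC} together with the machinery of \citet{10015228522} and \citet{MR2058122}. For part~(\ref{lem:LS3a}), I would run the interval decomposition from the proof of \citet{MR2058122} on the subsample $D_1$ (of size $n_1$): splitting $[0,\infty)$ into $I_{1,n_1},A_{1,n_1},A_{2,n_1},A_{3,n_1},I_{3,n_1},I_{4,n_1}$, every contribution to $\sup_t 2L_{n,1}(\widehat\eta_{n,1}(t),t)$ other than the one from $A_{2,n_1}$ is $o_\P(\log\log n_1)$ — the bounded ranges $I_{1,n_1}$ and $I_{4,n_1}$ contribute $O_\P(1)$, while over $A_{1,n_1}\cup A_{3,n_1}\cup I_{3,n_1}\subseteq D_{n_1}$ one combines the quadratic approximation with $\sup_{|t|\in D_{n_1}}S_{n,1}(t)=o_\P(\sqrt{\log_{(2)}n_1})$ from~\eqref{eq:theoremBCmore} — whereas $\lambda_{n,1}(A_{2,n_1})=M_{n,1}^{2}+o_\P(\log\log n_1)$ with $M_{n,1}^{2}=\log\log n_1+O_\P(1)$ by Proposition~\ref{prop:theoremBC} and Lemma~\ref{lem:LS2}. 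Hence the $A_{2,n_1}$ piece strictly dominates, and $|\widehat t_{n,1}|\in A_{2,n_1}$ with probability tending to~$1$.

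For part~(\ref{lem:LS3b}), I would combine part~(\ref{lem:LS3a}) with the uniform quadratic expansion $2L_{n,1}(\widehat\eta_{n,1}(t),t)=\{1+o_\P(\epsilon_{2,n_1})\}S_{n,1}(t)_{+}^{2}$ over $A_{2,n_1}$, which follows from Lemma~\ref{lem:LS2}(\ref{lem:LS2c}) and the derivation of~\eqref{eq:haolei} (now with $D_1$ in both roles, so that $S_{n,0}$ is replaced by $S_{n,1}$). Since $\widehat t_{n,1}$ maximizes the left-hand side over $A_{2,n_1}$ with probability tending to~$1$, while $\sup_{|t|\in A_{2,n_1}}S_{n,1}(t)=M_{n,1}$ with probability tending to~$1$ by Proposition~\ref{prop:theoremBC} (the supremum over $D_{n_1}$ being of smaller order), a one-line sandwich yields $S_{n,1}(\widehat t_{n,1})_{+}=M_{n,1}\{1+o_\P(\epsilon_{2,n_1})\}$; as $M_{n,1}=O_\P(\sqrt{\log_{(2)}n_1})=O_\P(\epsilon_{2,n_1}^{-1/2})$ the error is $o_\P(\epsilon_{2,n_1}^{1/2})$, and $S_{n,1}(\widehat t_{n,1})$ is positive with probability tending to~$1$, which is the claim.

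The substance of the lemma is part~(\ref{lem:LS3d}). On the event of part~(\ref{lem:LS3a}), $|\widehat t_{n,1}|$ is the location of the maximum over $A_{2,n_1}$ of $\{1+r_{n_1}(t)\}S_{n,1}(t)_{+}^{2}$ with $\sup_{A_{2,n_1}}|r_{n_1}|=o_\P(\epsilon_{2,n_1})$; since $S_{n,1}(\cdot)_{+}^{2}$ is of size $O_\P(\log_{(2)}n_1)$ there, this multiplicative error contributes only $o_\P(1)$, hence is negligible next to the $O_\P(1)$ gaps between the maxima of $S_{n,1}$ over competing sub-intervals, so it suffices to locate the maximum of $S_{n,1}$ itself on $A_{2,n_1}$. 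For this I would invoke the Poisson-clumping analysis behind Proposition~\ref{prop:theoremBC}: after rescaling, the point process of high local maxima of $S_{n,1}$ on $A_{2,n_1}$ converges to a Poisson process whose intensity is the product of Lebesgue measure and a law for the heights. Concretely, fixing $\ell\in(0,1)$ and splitting $A_{2,n_1}=A_{2,n_1}^{\sqsupset}(\ell)\cup A_{2,n_1}^{\sqsubset}(\ell)$ at $(1-\ell)c_{1,n_1}+\ell c_{2,n_1}$, each piece is a fixed fraction of $A_{2,n_1}$, whose length $\sim\sqrt{(\log n_1)/2}$ diverges; hence $\sup_{|t|\in A_{2,n_1}^{\sqsupset}(\ell)}S_{n,1}(t)$ and $\sup_{|t|\in A_{2,n_1}^{\sqsubset}(\ell)}S_{n,1}(t)$, normalized by the $A_{2,n_1}$ constants, converge jointly to independent Gumbel variables shifted by $\log\ell$ and $\log(1-\ell)$ respectively (independence from the Poisson structure; the $\log\ell$ shift because the centering constant for an interval of length $\ell L$ exceeds that for length $L$ by $\log\ell$ on the normalized scale). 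Consequently $\P\{|\widehat t_{n,1}|\in A_{2,n_1}^{\sqsupset}(\ell)\}\to\P(G_1+\log\ell>G_2+\log(1-\ell))=\ell$ for $G_1,G_2$ i.i.d.\ standard Gumbel (using that $G_1-G_2$ is logistic); since $\ell\in(0,1)$ is arbitrary and part~(\ref{lem:LS3a}) accounts for the mass outside $A_{2,n_1}$, $|\widehat t_{n,1}|$ is asymptotically uniform on $A_{2,n_1}$.

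Finally, part~(\ref{lem:LS3e}) follows from part~(\ref{lem:LS3d}): as $n_0/n_1$ is bounded and bounded away from~$0$, a short computation gives $c_{1,n_0}=c_{1,n_1}+o(1/\log n)$ and $c_{2,n_0}=c_{2,n_1}+O(1/\sqrt{\log n})$, so $A_{2,n_1}\setminus A_{2,n_0}$ lies in two intervals abutting the endpoints of $A_{2,n_1}$ of total length $o(|A_{2,n_1}|)$; by part~(\ref{lem:LS3d}) the probability that $|\widehat t_{n,1}|$ falls there tends to~$0$, and together with part~(\ref{lem:LS3a}) this gives $|\widehat t_{n,1}|\in A_{2,n_0}$ with probability tending to~$1$ (alternatively, one just repeats part~(\ref{lem:LS3a}) with $n_0$ in place of $n_1$ in the interval endpoints). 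The main obstacle is part~(\ref{lem:LS3d}): upgrading Proposition~\ref{prop:theoremBC} to the joint extreme-value behavior of $S_{n,1}$ on sub-intervals of $A_{2,n_1}$ — the asymptotic independence of competing sub-maxima and the exact $\log\ell$ Gumbel shift — and checking that the quadratic-approximation error does not perturb the limit; this is precisely where the Bickel--Chernoff/Liu--Shao Poisson-clumping apparatus does the real work.
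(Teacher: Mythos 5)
Your proposal is correct and takes essentially the same route as the paper's proof: part~(\ref{lem:LS3a}) via the Liu--Shao interval decomposition and~\eqref{eq:theoremBCmore}, part~(\ref{lem:LS3b}) via the uniform quadratic approximation from Lemma~\ref{lem:LS2} together with a sandwich against $M_{n,1}$, part~(\ref{lem:LS3d}) via the joint Gumbel/extreme-value limit (Leadbetter et al.) for the sub-interval maxima and the logistic-difference calculation, and part~(\ref{lem:LS3e}) as a consequence of~(\ref{lem:LS3d}) since $A_{2,n_0}$ and $A_{2,n_1}$ differ by an asymptotically negligible fraction of $|A_{2,n_1}|$ (in fact you are more explicit here than the paper, which simply declares~(\ref{lem:LS3e}) immediate from~(\ref{lem:LS3d})). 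One small imprecision: $I_{4,n_1}$ is not a bounded range, though the conclusion $\lambda_{n,1}(I_{4,n_1})=o_\P(\log_{(2)}n_1)$ you need still holds by Lemma~1 of \citet{MR2058122}.
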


\begin{proof}[Proof of Lemma~\ref{lem:LS3}]
We will again invoke notations collected in Table~\ref{tab:1}. 

\begin{enumerate}[(i)]
\item
This is obvious by noticing that
the exact order of $\lambda_{n,1}(A_{2,n_1})=\sup_{|t|\in A_{2,n_1}}2L_{n,1}(\widehat\eta_{n,1}(t),t)$ is $\log_{(2)}n_1$, i.e.,
\[
\lim_{n\to\infty}\P\{\lambda_{n,1}(A_{2,n_1}) - \log \log n_1 + \log(2\pi^2) \leq x\} = \exp\{-\exp(-x/2)\},
\]
while $\lambda_{n,1}(I_{k,n_1})=o_\P(\log_{(2)}n_1)$ for $k=1,3,4$ 
and $\lambda_{n,1}(A_{k,n_1})=o_\P(\log_{(2)}n_1)$ for $k=1,3$ (Lemma~1 and Pages~64--65 in \citet{MR2058122}).

\item By \eqref{lem:LS3a}, plugging $t=\widehat t_{n,1}$ into \eqref{eq:fenjie2} yields
\[
\lambda_{n,1}
\le \big\{1+o_{\P}(\epsilon_{2,n_1})\big\} S_{n,1}(\widehat t_{n,1})_{+}^2,
\]
and thus by $\lambda_{n,1}=M_{n,1}^2+o_\P(1)$ \citep[Equation (3)]{MR2058122},
\begin{align*}
S_{n,1}(\widehat t_{n,1})_{+}&
\ge \big\{1+o_{\P}(\epsilon_{2,n_1})\big\}\sqrt{M_{n,1}^2+o_\P(1)}
  = M_{n,1}+o_\P(\epsilon_{2,n_1}^{1/2}).
\yestag\label{eq:dayu}
\end{align*}
Based on \eqref{eq:dayu} and the fact $M_1=\sqrt{\log_{(2)}n}+o_\P(1)$ (Proposition~\ref{prop:theoremBC}), we deduce $S_{n,1}(\widehat t_{n,1})>0$ with probability tending to $1$ and
\begin{align*}
S_{n,1}(\widehat t_{n,1})
\ge M_{n,1}+o_\P(\epsilon_{2,n_1}^{1/2}).
\yestag\label{eq:dayu'}
\end{align*}
In addition, we have 
\begin{align*}
S_{n,1}(\widehat t_{n,1})&
\le \sup_{|t|\in A_{2,n_1}} S_{n,1}(t) 
\le M_{n,1} . 
\yestag\label{eq:xiaoyu}
\end{align*}
Combining \eqref{eq:dayu'} and \eqref{eq:xiaoyu} completes the proof of (\ref{lem:LS3b}).

\item
In order to prove (\ref{lem:LS3d}), we revisit the distribution of $\widehat t_{n,1}$. 
Define $S_1(t)$ to be process
\begin{equation}\label{eq:defS_1}
S_1(t):=\int_0^1 e^{tx-t^2}\d B_1(\Phi(x))
\end{equation}
where $B_1$ is the Brownian Bridge such that
\[
\sup_{0\le u\le 1}|B_{n,1}(u)-B_1(u)|=O_{\P}(n_1^{-1/2}\log n_1)
\]
with
\[
B_{n,1}(u):=n_1^{1/2}\Big[n_1^{-1}\sum_{i=1}^{n_1}\mathds{1}\{\Phi(X_{i,1})\le u\} - u\Big],
\]
and is independent of $B_0$, on a suitable probability space by the Hungarian construction \citep{MR0375412, MR0402883},
and define $\widetilde S_1(t)$ as $S_1(t)+\widetilde X e^{-t^2/2}$.
Then $S_1(t)$ and $\widetilde S_1(t)$ are Gaussian processes with covariance functions
\[
\rho(s,t)=\exp\Big\{-\frac{(s-t)^2}{2}\Big\}-\exp\Big\{-\frac{s^2}{2}-\frac{t^2}{2}\Big\}
\quad\text{and}\quad
\widetilde\rho(s,t)=\exp\Big\{-\frac{(s-t)^2}{2}\Big\},
\]
respectively. 
Notice for any interval $B\subseteq A_{2,n_1}$, following \citet{10015228522} and \citet{MR2058122}, we are able to prove that
\begin{align*}
\lambda_{n,1}(B) =\sup_{|t|\in B} S_{n,1}(t)_{+}^2+o_{\P}(1)
                 =\sup_{|t|\in B} S_1(t)_{+}^2+o_{\P}(1)
                 =\sup_{|t|\in B} \widetilde S_1(t)_{+}^2+o_{\P}(1).
\end{align*}
Furthermore, if the length $|B|$ tending to infinity, then
$\sup_{|t|\in B} \widetilde S_1(t) >0$ with probability tending to 1.
  Then we are ready to prove that $|\widehat t_{n,1}|$ is asymptotically uniformly distributed over $A_{2,n_1}$, in other words, for $0\leq \ell\leq 1$,
\begin{equation}\label{eq:uniform_over_infinity}
\P\{|\widehat t_{n,1}|\in A_{2,n_1}^{\sqsupset}(\ell)\}\to\ell~~~\text{as}~n\to\infty,
\end{equation}
where $A_{2,n}^{\sqsupset}(\ell)$ denotes the leftmost $100\ell\,\%$ portion of the interval $A_{2,n}=[c_{1,n}, c_{2,n}]$, recalling that $c_{1,n}:=2\sqrt{\smash[b]{\log_{(3)} n}}$ and $c_{2,n}:=\sqrt{(\log n)/2}\!-\!2\sqrt{\smash[b]{\log_{(2)} n}}$, and also let $A_{2,n}^{\sqsubset}(\ell)$ denote the rest of the interval:
\[
A_{2,n}^{\sqsupset}(\ell):=
              \big[c_{1,n},\, 
           (1-\ell)c_{1,n}
             +\ell c_{2,n}\big]~~~\text{and}~~~
A_{2,n}^{\sqsubset}(\ell):=
      \big[(1-\ell)c_{1,n}
             +\ell c_{2,n},\, 
                   c_{2,n}\big].
\]
The proof intuition of \eqref{eq:uniform_over_infinity} follows the spirit of the proof of Theorem~9.4.3 in \citet{MR691492}.
The detailed proof is as below.
With the usual notation, if $0< \ell< 1$, $\ell^* = 1 - \ell$, and
\begin{align*}\yestag\label{eq:extra_notations}
&T_{2,n_1}        := c_{2,n_1}-c_{1,n_1},\mkern-150mu
&\widetilde M_1(B):= \sup_{|t|\in B}\widetilde S_1(t), 
& \\
&Y_{T_{2,n_1}}^{\sqsupset}  := a_{2\ell   T}\Big\{\widetilde M_1\big(A_{2,n_1}^{\sqsupset}(\ell)\big) - b_{2\ell   T}\Big\},\mkern-30mu
&Y_{T_{2,n_1}}^{\sqsubset}  := a_{2\ell^* T}\Big\{\widetilde M_1\big(A_{2,n_1}^{\sqsubset}(\ell)\big) - b_{2\ell^* T}\Big\},
& \\
&a_T  := (2\log T)^{1/2},
&b_T  := (2\log T)^{1/2}+\Big(\log\frac{1}{2\pi}\Big)\Big/(2\log T)^{1/2},
&
\end{align*}
then the rescaled maximum over $A_{2,n}^{\sqsupset}(\ell)$ and $A_{2,n}^{\sqsubset}(\ell)$ are asymptotically i.i.d. with the standard Gumbel distribution:\!\!
\[
\P\{Y_{T_{2,n_1}}^{\sqsupset} \le y_1, Y_{T_{2,n_1}}^{\sqsubset} \le y_2\} \to \exp\{-\exp(-y_1) - \exp(-y_2)\}
\]
as $n \to \infty$ (see Theorems~9.2.1 and 9.2.2 in \citet{MR691492}). 
Notice that $\widetilde M_1\big(A_{2,n_1}^{\sqsupset}(\ell)\big)$
and $\widetilde M_1\big(A_{2,n_1}^{\sqsubset}(\ell)\big)$ tend to infinity in probability,
and thus are larger than 0 with probabilities tending to 1.
Furthermore, we deduce
\begin{align*}
\P\{\widehat t_{n,1}\in A_{2,n_1}^{\sqsupset}(\ell)\} 
&= \P\{\lambda_{n,1}(A_{2,n_1}^{\sqsupset}(\ell)) \ge 
       \lambda_{n,1}(A_{2,n_1}^{\sqsubset}(\ell))\} \\
&= \P\big\{\widetilde M_1\big(A_{2,n_1}^{\sqsupset}(\ell)\big) \ge 
       \widetilde M_1\big(A_{2,n_1}^{\sqsubset}(\ell)\big)\big\} +o(1)\\
&= \P\Big\{Y_{T_{2,n_1}}^{\sqsupset}-\frac{a_{2\ell T_{2,n_1}}}{a_{2\ell^* T_{2,n_1}}}Y_{T_{2,n_1}}^{\sqsubset} \ge a_{2\ell T_{2,n_1}}(b_{2\ell^* T_{2,n_1}}-b_{2\ell T_{2,n_1}})\Big\} +o(1)
\end{align*}
by noticing\vspace{-.3mm}
\begin{align*}
&\P\big\{\widetilde M_1\big(A_{2,n_1}^{\sqsupset}(\ell)\big) - c\epsilon_{2,n_1}^{1/2} \ge 
       \widetilde M_1\big(A_{2,n_1}^{\sqsubset}(\ell)\big)\big\}+o(1)\\
\le\;&\P\big\{\lambda_{n,1}\big(A_{2,n_1}^{\sqsupset}(\ell)\big) \ge 
       \lambda_{n,1}\big(A_{2,n_1}^{\sqsubset}(\ell)\big)\big\} \\
\le\;&\P\big\{\widetilde M_1\big(A_{2,n_1}^{\sqsupset}(\ell)\big) + c\epsilon_{2,n_1}^{1/2} \ge 
       \widetilde M_1\big(A_{2,n_1}^{\sqsubset}(\ell)\big)\big\}+o(1)
\end{align*}
for any constant $c>0$.
As $n \to \infty$, noticing 
\[
a_{2\ell^* T_{2,n_1}}/a_{2\ell T_{2,n_1}} \to 1 \quad\text{and}\quad a_{2\ell T_{2,n_1}}(b_{\ell^* T_{2,n_1}} - b_{2\ell T_{2,n_1}}) \to \log(\ell^*/\ell),
\]
the above probability tends to
\[
\P\{Y^{\sqsupset}-Y^{\sqsubset}\ge\log(\ell^*/\ell)\},
\]
where $Y^{\sqsupset}$ and $Y^{\sqsubset}$ are two independent random variables with the common cumulative distribution function $\exp\{-\exp(-y)\}$.  Evaluating this probability 
\begin{align*}
\;&\P\Big\{Y^{\sqsupset}-Y^{\sqsubset} \ge \log\frac{\ell^*}{\ell}\Big\}\\
=\;&\int_{-\infty}^{+\infty} 
\exp\Big[-\exp\Big\{-\Big(y-\log\frac{\ell^*}{\ell}\Big)\Big\}\Big] 
\exp[-\{y+\exp(-y)\}]\d y 
\;=\;\ell
\end{align*}
yields the desired value $\ell$ in \eqref{eq:uniform_over_infinity}.

\item As an immediate corollary of (\ref{lem:LS3d}), we conclude (\ref{lem:LS3e}). \qedhere
\end{enumerate}
\end{proof}

\section{Power analysis I}\label{sec:4}

In this section and the next, we investigate the local power of the split likelihood ratio test under Gaussian mixture models. To this end, we will consider two types of local alternatives that have been investigated in prior literature.  In the present section, we take up the contiguous alternative used by \citet{MR2265342}.  The non-contiguous alternative from \citet{MR2146091} will be treated in the next section.

Recall the model $f_{p,t}(x) = (1-p) \phi(x;0,1)+p \phi(x;t,1)$ from \eqref{eq:two_contamin}.  The contiguous case considers the following sequence of local alternative hypotheses:
\begin{equation}\label{eq:def_alter1}
H_{1,n}:~
p = q_n,~~~
t=\mu_n,~~~
\text{with}~\lim_{n\to\infty} \sqrt{n}q_n\mu_n=\gamma\in\R~~~\text{and}~~~\lim_{n\to\infty}\mu_n=\mu\in\R.
\end{equation}
Note that $\mu$ can be equal to $0$.  We first state the result of \citet{MR2265342} about the power of the LRT under this contiguous alternative $H_{1,n}$.

\begin{proposition}[Theorem 5 in \citet{MR2265342}]\label{prop:local1}
Under the sequence of local alternative hypotheses $H_{1,n}$ given in \eqref{eq:def_alter1}, for any $\gamma$ and $\mu$, the asymptotic local power of the LRT is
\[
\lim_{n\to\infty}\P_{H_{1,n}}\{\lambda_n > c_{n,\alpha}\}=\alpha.
\]
\end{proposition}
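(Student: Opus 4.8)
The plan is to derive Proposition~\ref{prop:local1} from the extreme-value limit of the LR statistic under the null (Proposition~\ref{prop:theoremLS}) by showing that $\lambda_n$ has \emph{the same} limiting law under the contiguous alternative $H_{1,n}$ of \eqref{eq:def_alter1} as under $H_0$. Once that is in place the conclusion is immediate: the threshold $c_{n,\alpha}$ of \eqref{eq:def_c} satisfies $c_{n,\alpha}-\log\log n+\log(2\pi^2)\to-2\log\log(1-\alpha)^{-1}=:x_\alpha$, a continuity point of $x\mapsto\exp\{-\exp(-x/2)\}$, so that $\P_{H_{1,n}}\{\lambda_n>c_{n,\alpha}\}\to1-\exp\{-\exp(-x_\alpha/2)\}=1-(1-\alpha)=\alpha$.

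\emph{Step 1 (contiguity).} Write $R_n:=\log\frac{\d\P_{H_{1,n}}}{\d\P_{H_0}}=\sum_{i=1}^n\log\{1+q_n(e^{\mu_n X_i-\mu_n^2/2}-1)\}$. Since $\sqrt n q_n\mu_n\to\gamma$ and $\mu_n\to\mu$, the summands are uniformly negligible, and a second-order expansion of $\log(1+\cdot)$ gives $R_n=q_n\sum_i(e^{\mu_n X_i-\mu_n^2/2}-1)-\tfrac12 q_n^2\sum_i(e^{\mu_n X_i-\mu_n^2/2}-1)^2+o_{\P}(1)$. Under $H_0$ the linear part is asymptotically $N(0,\tau^2)$, with $\tau^2=\gamma^2$ when $\mu=0$ (use $e^{\mu_n x-\mu_n^2/2}-1\approx\mu_n x$) and $\tau^2=(\gamma/\mu)^2(e^{\mu^2}-1)$ when $\mu\neq0$, while the quadratic part converges in probability to $\tau^2/2$; hence $R_n\stackrel{\sf d}{\longrightarrow}N(-\tau^2/2,\tau^2)$ under $H_0$, whose exponential has mean $1$. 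By Le Cam's first lemma, $\{\P_{H_{1,n}}\}$ and $\{\P_{H_0}\}$ are mutually contiguous, so every statement of the form $o_{\P}(1)$, $O_{\P}(1)$ or $o_{\P}(\sqrt{\log_{(2)}n})$ that holds under $H_0$ also holds under $H_{1,n}$.

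\emph{Step 2 (the extreme-value limit is undisturbed).} Recall $\lambda_n=M_n^2+o_{\P}(1)$ with $M_n=\sup_t S_n(t)$, and that by Proposition~\ref{prop:theoremBC} the value of $M_n$ is governed by the diverging band $A_{2,n}=[2\sqrt{\log_{(3)}n},\sqrt{(\log n)/2}-2\sqrt{\log_{(2)}n}]$, the contribution of $|t|\in D_n$ being $o_{\P}(\sqrt{\log_{(2)}n})$; by Step 1 all of this transfers to $H_{1,n}$. By Le Cam's third lemma, under $H_{1,n}$ the process $S_n(\cdot)$ acquires only the deterministic drift $h_n(t):=\cov_{H_0}(S_n(t),R_n)=\sqrt n q_n(e^{t\mu_n}-1)e^{-t^2/2}$ while its limiting covariance is unchanged. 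On any bounded $t$-set this drift stays bounded (with limit $\gamma t e^{-t^2/2}$ if $\mu=0$, resp.\ $(\gamma/\mu)(e^{t\mu}-1)e^{-t^2/2}$ if $\mu\neq0$), so $\sup_{|t|\le t_0}S_n(t)^2=O_{\P}(1)=o_{\P}(\log_{(2)}n)$ under $H_{1,n}$ as well; but for $t\in A_{2,n}$, i.e.\ $t$ of order $\sqrt{\log n}$, the Gaussian factor $e^{-t^2/2}$ overwhelms $\sqrt n q_n(e^{t\mu_n}-1)$, and a short computation using $\sqrt n q_n\mu_n\to\gamma$ (the maximum of $(e^{t\mu_n}-1)e^{-t^2/2}$ over $A_{2,n}$ is eventually attained at the left endpoint $2\sqrt{\log_{(3)}n}$ and is $o((\log_{(2)}n)^{-2})$ after multiplication by $\sqrt nq_n$) gives $\sup_{|t|\in A_{2,n}}|h_n(t)|\to0$. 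Thus the drift is asymptotically negligible exactly where the maximum lives, so the rescaled $\sup_{|t|\in A_{2,n}}S_n(t)$ has the same Gumbel limit under $H_{1,n}$ as under $H_0$; hence $M_n$, and therefore $\lambda_n$, has the same limiting law under $H_{1,n}$ as under $H_0$, completing the argument together with the first paragraph.

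\emph{Main obstacle.} The delicate point is Step 2: one must certify that the quadratic approximation $\lambda_n=M_n^2+o_{\P}(1)$ and the localization of $M_n$ to the band $A_{2,n}$ — both originally proven only under the standard normal null in \citet{MR2058122} — survive the change of measure (contiguity makes this essentially automatic, but the function-space form of Le Cam's third lemma, hence joint tightness of $(R_n,S_n(\cdot))$, has to be arranged), and that the contiguous drift, which is genuinely non-negligible only in the bounded region near the fixed point $t=\mu$, really does not leak into $A_{2,n}$ — this is the quantitative estimate $\sup_{|t|\in A_{2,n}}|h_n(t)|\to0$, whose uniformity needs care, with the borderline case $\mu=0$ (where the mixture perturbation degenerates and the local model is a one-dimensional location family in the score direction $x\mapsto x$) handled separately. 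A more hands-on alternative avoids abstract contiguity transfer and instead re-runs the empirical-process/Hungarian-construction arguments behind Propositions~\ref{prop:theoremBC}--\ref{prop:theoremLS} directly under $H_{1,n}$, tracking the mean shift $\E_{H_{1,n}}[S_n(t)]=\sqrt n q_n(e^{t\mu_n}-1)e^{-t^2/2}$ explicitly; this is more laborious but sidesteps invoking the third lemma in process form.
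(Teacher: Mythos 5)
The paper does not prove Proposition~\ref{prop:local1}: it is stated as a citation of Theorem~5 in \citet{MR2265342} and used as a black box.  So there is no ``paper's own proof'' against which to compare your argument.  That said, the paper's analogous development for the SLRT (Theorem~\ref{thm:local1} together with Lemma~\ref{lem:azaisplus}) uses exactly the same machinery you invoke --- contiguity via local asymptotic normality, Le~Cam's third lemma, and asymptotic independence driven by the fact that the drift induced by $H_{1,n}$ is asymptotically irrelevant on the band $A_{2,n}$ where the supremum of $S_n$ lives --- and it explicitly points the reader back to the relevant estimate (Equation~(21) and Lemma~1, pages 795--797) in \citet{MR2265342}.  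Your reconstruction is thus faithful to the route taken by both the original source and the paper's SLRT version.

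On correctness: your Step~1 (LAN and contiguity) is standard and matches what the paper attributes to \citet{MR2265342}.  In Step~2, the covariance computation $h_n(t)=\cov_{H_0}(S_n(t),R_n)=\sqrt{n}\,q_n(e^{t\mu_n}-1)e^{-t^2/2}$ is correct (using $\E[(e^{tX-t^2/2}-1)(e^{sX-s^2/2}-1)]=e^{ts}-1$), and your claim that $\sup_{|t|\in A_{2,n}}|h_n(t)|\to0$ holds: at the left endpoint $t=2\sqrt{\log_{(3)}n}$ one gets, in the case $\mu=0$, $h_n\approx\gamma t e^{-t^2/2}=2\gamma\sqrt{\log_{(3)}n}\,(\log_{(2)}n)^{-2}$, which is $o\big((\log_{(2)}n)^{-1/2}\big)$, i.e.\ below the scale on which the Gumbel normalization is sensitive; for $\mu\neq0$ one gets an even smaller quantity since $\sqrt n q_n$ stays bounded.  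Two gaps remain, both of which you correctly flag as the ``main obstacle'': (a)~Le~Cam's third lemma must be applied in process form (or the empirical-process argument behind Propositions~\ref{prop:theoremBC}--\ref{prop:theoremLS} re-run under $H_{1,n}$ with the mean shift tracked explicitly) in order to conclude that the Gumbel limit of the supremum survives the change of measure --- pointwise limits alone do not suffice; and (b)~the localization of the supremum to $A_{2,n}$, and the negligibility of $\sup_{|t|\in[t_0,\,2\sqrt{\log_{(3)}n}]}S_n(t)^2$ (the band $A_{1,n}$ between the fixed constant $t_0$ and the diverging left endpoint, not actually covered by ``any bounded set''), must be re-established under $H_{1,n}$; contiguity transfers the $o_{\P}(\log_{(2)}n)$ bounds, and the drift on $A_{1,n}$ stays bounded because $te^{-t^2/2}$ is decreasing there, so the argument closes, but it needs to be said.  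With those two points tightened, your proof is a sound sketch of the cited result.
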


Proposition~\ref{prop:local1} says that the LRT cannot distinguish the null hypothesis from any contiguous local alternative.  Unsurprisingly, as shown in Theorem~\ref{thm:local1} below, the SLRT will not perform better than the LRT under this contiguous local alternative.

\begin{theorem}\label{thm:local1}
Under the sequence of local alternative hypotheses $H_{1,n}$ given in \eqref{eq:def_alter1}, for any $\gamma$ and $\mu$, the asymptotic local power of the SLR statistic is
\[
\lim_{n\to\infty}\P_{H_{1,n}}\{\lambda_n^{\rm split} > -2\log\alpha\}=0,
\]
and if the asymptotic critical point $c_{n,\alpha}^{\rm split}$ defined in \eqref{eq:def_csplit} is adapted, then
\[
\lim_{n\to\infty}\P_{H_{1,n}}\{\lambda_n^{\rm split} > c_{n,\alpha}^{\rm split}\}=\alpha.
\]
\end{theorem}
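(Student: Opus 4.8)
The plan is to exploit contiguity: I would show that the contiguous alternatives $H_{1,n}$ are mutually contiguous to the null $H_0$, that the joint limit of the split statistic and the log-likelihood ratio is a \emph{product} of Gaussians under $H_0$, and hence (Le Cam's third lemma) that the asymptotic null law of Theorem~\ref{thm:null} persists under $H_{1,n}$ with no shift. Concretely, writing
\[
T_n:=\frac{\lambda_n^{\rm split}+\frac{m_0}{m_1}\log\log n}{2\sqrt{\frac{m_0}{m_1}\log\log n}},
\]
I aim at $T_n\stackrel{\sf d}{\longrightarrow}N(0,1)$ under $H_{1,n}$. Both displayed conclusions then follow at once: $\{\lambda_n^{\rm split}>c_{n,\alpha}^{\rm split}\}=\{T_n>\Phi^{-1}(1-\alpha)\}$ for $c_{n,\alpha}^{\rm split}$ as in \eqref{eq:def_csplit}, so $\P_{H_{1,n}}\{\lambda_n^{\rm split}>c_{n,\alpha}^{\rm split}\}\to\alpha$; and $\lambda_n^{\rm split}=-\frac{m_0}{m_1}\log\log n+2\sqrt{\frac{m_0}{m_1}\log\log n}\,T_n\stackrel{\sf p}{\longrightarrow}-\infty$, so the fixed threshold $-2\log\alpha$ is exceeded with probability tending to $0$.

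For the contiguity, the log-likelihood ratio of $H_{1,n}$ against $H_0$ is $\Lambda_n=\sum_{i=1}^n\log\{1+q_n(e^{\mu_nX_i-\mu_n^2/2}-1)\}=\Lambda_{n,0}+\Lambda_{n,1}$, split along $D_0,D_1$. A standard triangular-array expansion of $\log(1+\cdot)$ with Lindeberg's CLT and the weak law, using $\sqrt n\,q_n\mu_n\to\gamma$ and $\mu_n\to\mu$, gives under $H_0$ that $\Lambda_{n,k}\stackrel{\sf d}{\longrightarrow}N(-\tfrac12 m_k\tau^2,m_k\tau^2)$ for $k=0,1$, hence $\Lambda_n\stackrel{\sf d}{\longrightarrow}N(-\tfrac12\tau^2,\tau^2)$, where $\tau^2:=\gamma^2(e^{\mu^2}-1)/\mu^2$ (read as $\gamma^2$ when $\mu=0$). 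Since $\E\,e^{N(-\tau^2/2,\tau^2)}=1$, Le Cam's first lemma gives mutual contiguity of $\P_{H_{1,n}}$ and $\P_{H_0}$.

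The heart of the argument is the joint convergence, under $H_0$,
\[
(T_n,\ \Lambda_n)\stackrel{\sf d}{\longrightarrow}(Z,\,G),\qquad Z\sim N(0,1),\ \ G\sim N(-\tfrac12\tau^2,\tau^2),\ \ Z\perp G.
\]
By \eqref{eq:fenjie1}, \eqref{eq:erci} and \eqref{eq:jianjie}, under $H_0$ we have $T_n=S_{n,0}(\widehat t_{n,1})+o_\P(1)=\widetilde S_0(\widehat t_{n,1})+o_\P(1)$, where the Gaussian process $\widetilde S_0$ of Lemma~\ref{lem:LS1} is a function of $D_0$ (and an auxiliary independent normal), hence independent of $\widehat t_{n,1}$, so $\widetilde S_0(\widehat t_{n,1})\sim N(0,1)$ exactly. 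Conditioning on $D_1$ freezes $\widehat t_{n,1}$, which lies in $A_{2,n_0}$ with probability $\to1$ (Lemma~\ref{lem:LS3}(\ref{lem:LS3e})), and $\Lambda_{n,1}$, and leaves $S_{n,0}(\widehat t_{n,1})$ and $\Lambda_{n,0}$ as functions of the independent half $D_0$, with conditional marginal limits $N(0,1)$ (exactly Lemma~\ref{lem:LS1}) and $N(-\tfrac12 m_0\tau^2,m_0\tau^2)$, and asymptotic covariance $\lim n_0^{1/2}q_n\,e^{-\widehat t_{n,1}^2/2}(e^{\widehat t_{n,1}\mu_n}-1)$ --- the per-observation computation gives $\cov_{H_0}\{W_{i,0}(t),\,e^{\mu_nX_{i,0}-\mu_n^2/2}-1\}=e^{-t^2/2}(e^{t\mu_n}-1)$, and routine moment bounds show the quadratic-in-$q_n$ remainder of $\Lambda_{n,0}$ contributes only $o(1)$. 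This covariance vanishes because
\[
\sup_{t\in A_{2,n_0}}\Big|n_0^{1/2}q_n\,e^{-t^2/2}\big(e^{t\mu_n}-1\big)\Big|\longrightarrow0,
\]
which one sees from $|e^{t\mu_n}-1|\le t|\mu_n|e^{t|\mu_n|}$, $e^{-t^2/2+t|\mu_n|}\le e^{\mu_n^2/2-t^2/8}$ for $t\ge2|\mu_n|$, the boundedness of $n_0^{1/2}q_n|\mu_n|=\sqrt{m_0}\,|\sqrt n\,q_n\mu_n|(1+o(1))$, and $\sup_{t\ge2\sqrt{\log_{(3)} n_0}}te^{-t^2/8}\to0$. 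Hence, conditionally on $D_1$ and uniformly in $\widehat t_{n,1}\in A_{2,n_0}$, $(S_{n,0}(\widehat t_{n,1}),\Lambda_{n,0})$ converges to $N(0,1)\otimes N(-\tfrac12 m_0\tau^2,m_0\tau^2)$; multiplying through by $e^{iu\Lambda_{n,1}}$, integrating out $D_1$ (dominated convergence, the event $\{\widehat t_{n,1}\notin A_{2,n_0}\}$ being negligible) and using $\Lambda_{n,1}\stackrel{\sf d}{\longrightarrow}N(-\tfrac12 m_1\tau^2,m_1\tau^2)$ yields the product limit, and Slutsky then replaces $S_{n,0}(\widehat t_{n,1})$ by $T_n$.

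Finally, since $G\sim N(-\tau^2/2,\tau^2)$ is independent of $Z$ and $\E\,e^G=1$, Le Cam's third lemma applied to the jointly Gaussian limit $(Z,G)$ gives, under $H_{1,n}$, $T_n\stackrel{\sf d}{\longrightarrow}N(\cov(Z,G),1)=N(0,1)$ --- the location shift being exactly $\cov(Z,G)=0$. This is the reduction promised in the first paragraph, and the theorem follows. The single real obstacle is the uniform covariance bound above: it is the analytic expression of why the SLRT has no power here --- the estimator $\widehat t_{n,1}$ built from $D_1$ is (asymptotically uniformly) confined to the bulk window $A_{2,n_0}$, which under a contiguous alternative sits too far from the true location $\mu_n$, on the exponential scale governing $S_{n,0}$, for the inference half $D_0$ to register the perturbation; everything else is contiguity bookkeeping layered on the Section~\ref{sec:3} results.
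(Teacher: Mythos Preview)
Your proof is correct and follows the same overall strategy as the paper---Le Cam's third lemma plus asymptotic independence of the test statistic and the log-likelihood ratio---but your execution of the independence step differs and is in some respects more economical.

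The paper passes to the Gaussian process approximations $Y_0,Y_1$ (via Lemma~\ref{lem:SmallDiff}) and then establishes the stronger statement that the \emph{pair} $\big(\overline{S_{n,1}(\widehat t_{n,1})},\,S_{n,0}(\widehat t_{n,1})\big)$ is asymptotically independent of $\Lambda_n$ (Lemma~\ref{lem:azaisplus}). This in turn is reduced, through a chain of sublemmas, to the smallness of the covariance $r_{\rm NSE}(\widehat t_{n,1},\mu)$ and to the auxiliary fact (Lemma~\ref{lem:within_indep}) that $\overline{Y_1(\widehat t_{n,1})}$ and $\widehat t_{n,1}$ are themselves asymptotically independent. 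You instead observe that for the theorem only the joint law of $(T_n,\Lambda_n)$ is needed, condition on $D_1$ to freeze $\widehat t_{n,1}$ and $\Lambda_{n,1}$, and compute the per-observation covariance $\cov_{H_0}\{W_{i,0}(t),\,e^{\mu_n X_{i,0}-\mu_n^2/2}-1\}=e^{-t^2/2}(e^{t\mu_n}-1)$ directly, bounding it to zero uniformly on $A_{2,n_0}$ by an elementary inequality. This bypasses the $S_{n,1}$ component and the extreme-value independence lemma entirely. What your route buys is brevity and a cleaner separation of the two halves of the sample; what the paper's route buys is a slightly more detailed picture (the joint behavior including $S_{n,1}(\widehat t_{n,1})$) and reuse of the Gaussian-process infrastructure from Section~\ref{sec:3}. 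Both arguments ultimately rest on the same mechanism: $\widehat t_{n,1}$ is confined to $A_{2,n_0}$, where the covariance kernel between the score at $\widehat t_{n,1}$ and the score at the fixed $\mu$ is exponentially small.

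One point worth making explicit in your write-up: the passage from ``marginals converge and covariance vanishes'' to ``joint converges to a product'' requires a genuine joint limit theorem, not just the two ingredients separately. In your setting this is supplied by a bivariate triangular-array CLT for $(S_{n,0}(t_n),\,q_n\sum Z_{i,0}(\mu_n))$ with $t_n\in A_{2,n_0}$; the Lindeberg condition for the $W_{i,0}(t_n)$ component does hold uniformly over $A_{2,n_0}$ (the upper endpoint $\sqrt{(\log n_0)/2}-2\sqrt{\log_{(2)}n_0}$ is chosen precisely so that the truncated second moment decays), but this is not entirely ``routine'' and deserves a line of justification. Alternatively, you can route the joint convergence through the Gaussian approximation $S_{n,0}(t)=\widetilde S_0(t)+o_\P(1)$ from~\eqref{eq:jianjie}, as the paper does, which sidesteps the Lindeberg verification.
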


\begin{proof}[Proof of Theorem~\ref{thm:local1}]

Using Le~Cam's third lemma (Theorem~6.6 in \citet{MR1652247}), for the contiguous local alternative $H_{1,n}$, the local alternative distribution of the SLR statistic is determined by the joint distribution of
\begin{equation}\label{eq:two-terms-LeCam}
\lambda_n^{\rm split}
~~~\text{and}~~~
\log\frac{\d\P_{H_{1,n}}}{\d\P_{H_0}}(X_1,\dots,X_{n})
\end{equation}
under the null hypothesis. 
Specifically, if the two terms in \eqref{eq:two-terms-LeCam} are asymptotically independent under the null hypothesis, then the SLR statistic has the same local alternative distribution as the null distribution.

Recalling Equation~\eqref{eq:fenjie1}, it holds that, under the null hypothesis, the SLR statistic is asymptotically equal to 
\[
\lambda_n^{\rm split}
  =2L_{n,0}(\widehat\eta_{n,1}(\widehat t_{n,1}),\widehat t_{n,1})
  =2\sqrt{\text{\ss}}S_{n,1}(\widehat t_{n,1}) S_{n,0}(\widehat t_{n,1}) 
 -\Big\{\sqrt{\text{\ss}}S_{n,1}(\widehat t_{n,1})\Big\}^2+o_{\P}(1).
\]
The local asymptotic normality of 
\[
\log\frac{\d\P_{H_{1,n}}}{\d\P_{H_0}}(X_1,\dots,X_{n})
\]
and accordingly the contiguity of the alternatives have been proved by \citet{MR2265342}.  Following their arguments, we can establish
\begin{align*}
\log\frac{\d\P_{H_{1,n}}}{\d\P_{H_0}}(X_1,\dots,X_n)
&=C(\gamma, \mu)\Big\{m_1^{1/2}Y_1(\mu_n)+m_0^{1/2}Y_0(\mu_n)\Big\}-\frac{C(\gamma, \mu)^2}{2}+o_{\P_{H_0}}(1)\\
&=C(\gamma, \mu)\Big\{m_1^{1/2}Y_1(\mu)+m_0^{1/2}Y_0(\mu)\Big\}-\frac{C(\gamma, \mu)^2}{2}+o_{\P_{H_0}}(1),
\end{align*}
where
\[
C(\gamma, \mu)=\begin{cases}\gamma & \text{if}~\mu=0,\\
                            \gamma\mu^{-1}\sqrt{\exp(\mu^2)-1} & \text{if}~\mu>0,
               \end{cases}
\]
the process $Y_0$ is the standardized version of $S_0$ defined in \eqref{eq:defS_0},
\begin{equation}\label{eq:defY_0}
Y_0(t) = \frac{S_0(t)}{\sqrt{\rho(t,t)}}
       = \frac{S_0(t)}{\sqrt{1-\exp(-t^2)}},
\end{equation}
and $Y_1$ is the standardized version of $S_1$ defined in \eqref{eq:defS_1},
\begin{equation}\label{eq:defY_1}
Y_1(t) = \frac{S_1(t)}{\sqrt{\rho(t,t)}}
       = \frac{S_1(t)}{\sqrt{1-\exp(-t^2)}}.
\end{equation}
Notice that $Y_0$ and $Y_1$ are two independent zero-mean non-stationary Gaussian processes with unit variance and the covariance function
\[\yestag\label{eq:NSE}
r_{\rm NSE}(s,t) = \frac{\exp(st)-1}{\sqrt{\exp(s^2)-1}\sqrt{\exp(t^2)-1}}.
\]
Here, ${\rm NSE}$ stands for nearly squared exponential, as the right-hand side closely resembles the squared exponential (SE) covariance function.

Using Le~Cam's third lemma (Theorem~6.6 in \citet{MR1652247}), Theorem~\ref{thm:local1} is a direct corollary of Lemma~\ref{lem:azaisplus} below.
\end{proof}

\begin{lemma}\label{lem:azaisplus}
For all $\mu$, 
\[
\Big(\,\overline{S_{n,1}(\widehat t_{n,1})}, \,
S_{n,0}(\widehat t_{n,1})\Big)
~~~\text{and}~~~
\log\frac{\d\P_{H_{1,n}}}{\d\P_{H_0}}(X_1,\dots,X_{n})
\]
are asymptotically independent under $\P_{H_0}$, where 
\[
\overline{S_{n,1}(\widehat t_{n,1})}:=\sqrt{\log_{(2)}n_1}\times S_{n,1}(\widehat t_{n,1})-\log_{(2)}n_1 + \log(\sqrt{2}\:\!\pi).
\]
\end{lemma}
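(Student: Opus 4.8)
The plan is to establish asymptotic independence by showing that the pair $\big(\overline{S_{n,1}(\widehat t_{n,1})},\,S_{n,0}(\widehat t_{n,1})\big)$ depends (asymptotically) only on the processes $S_{n,1}$ and $S_{n,0}$ restricted to $|t|\in A_{2,n_1}$, while the log-likelihood ratio $\log(\d\P_{H_{1,n}}/\d\P_{H_0})$ depends (asymptotically) only on $Y_1(\mu)$ and $Y_0(\mu)$, which are evaluations of the same processes at the \emph{fixed} point $t=\mu$. Since $\mu$ is a fixed constant and $A_{2,n_1}=[2\sqrt{\log_{(3)}n_1},\,\sqrt{(\log n_1)/2}-2\sqrt{\log_{(2)}n_1}]$ drifts off to $+\infty$, the point $\mu$ eventually lies outside $A_{2,n_1}$; the heart of the argument is then that the Gaussian processes $\widetilde S_0,\widetilde S_1$ (the couplings from the Hungarian construction used in Lemma~\ref{lem:LS1} and Lemma~\ref{lem:LS3}) become asymptotically independent of their own restrictions to a region receding to infinity, because the NSE covariance $r_{\rm NSE}(s,t)$ from \eqref{eq:NSE} decays to $0$ as $|s-t|\to\infty$ with $s$ fixed.

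First I would reduce everything to the Gaussian coupling. Using the Hungarian bounds \eqref{eq:Hung} and the estimate \eqref{eq:jianjie} from the proof of Lemma~\ref{lem:LS1}, together with the analogous bound for $S_{n,1}$ on $A_{2,n_1}$, I would replace $S_{n,0}(t)$ by $\widetilde S_0(t)$ and $S_{n,1}(t)$ by $\widetilde S_1(t)$ uniformly over $|t|\in A_{2,n_1}$, up to $o_{\P}(\epsilon_{2,n}^{1/2})$ errors; by Lemma~\ref{lem:LS3}(\ref{lem:LS3b}) this in particular gives $\overline{S_{n,1}(\widehat t_{n,1})} = \sqrt{\log_{(2)}n_1}\,\widetilde M_1(A_{2,n_1}) - \log_{(2)}n_1 + \log(\sqrt2\,\pi) + o_{\P}(1)$, and $S_{n,0}(\widehat t_{n,1})=\widetilde S_0(\widehat t_{n,1})+o_{\P}(1)$. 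Since $\widehat t_{n,1}$ is built from $D_1$ it is independent of $\widetilde S_0$, so $\widetilde S_0(\widehat t_{n,1})$ is a standard normal evaluated at an independent random time. On the $D_1$ side, by the distributional identity for $\lambda_{n,1}(B)$ established in the proof of Lemma~\ref{lem:LS3}(\ref{lem:LS3d}), the vector $\big(\widehat t_{n,1},\widetilde M_1(A_{2,n_1})\big)$ is a measurable functional (up to $o_\P$) of the restriction $\widetilde S_1|_{A_{2,n_1}}$. Likewise, from the proof of Theorem~\ref{thm:local1}, $\log(\d\P_{H_{1,n}}/\d\P_{H_0})=C(\gamma,\mu)\{m_1^{1/2}Y_1(\mu)+m_0^{1/2}Y_0(\mu)\}-C(\gamma,\mu)^2/2+o_{\P_{H_0}}(1)$, so it is a functional of $(Y_1(\mu),Y_0(\mu))$, equivalently of $(\widetilde S_1(\mu),\widetilde S_0(\mu))$.

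The key step is then a Gaussian decoupling lemma: for the NSE-covariance process $\widetilde S_1$ (and separately $\widetilde S_0$), the random variable $\widetilde S_1(\mu)$ becomes asymptotically independent of the whole restriction $\{\widetilde S_1(t):|t|\in A_{2,n_1}\}$ as $n\to\infty$. I would prove this by conditioning: for a Gaussian process, the conditional law of $\widetilde S_1|_{A_{2,n_1}}$ given $\widetilde S_1(\mu)$ differs from its unconditional law only through a mean shift proportional to $\sup_{|t|\in A_{2,n_1}} r_{\rm NSE}(\mu,t)$, which by \eqref{eq:NSE} is $O\big(e^{\mu c_{1,n_1}}/\sqrt{e^{c_{1,n_1}^2}-1}\big)\to 0$ since $c_{1,n_1}=2\sqrt{\log_{(3)}n_1}\to\infty$. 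Hence the conditional and unconditional distributions of any fixed functional of $\widetilde S_1|_{A_{2,n_1}}$ — in particular of $\big(\widehat t_{n,1},\widetilde M_1(A_{2,n_1})\big)$ — converge to the same limit regardless of the value of $\widetilde S_1(\mu)$, and analogously $\widetilde S_0(\widehat t_{n,1})$ (conditionally on $D_1$, hence on $\widehat t_{n,1}$) has a law asymptotically unaffected by $\widetilde S_0(\mu)$. Combining these two decouplings and the independence of the $D_0$ and $D_1$ systems yields asymptotic independence of the full pair in the statement from $(\widetilde S_1(\mu),\widetilde S_0(\mu))$, hence from the log-likelihood ratio, which is what Le~Cam's third lemma needs.

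I expect the main obstacle to be making the ``functional of the restriction'' claims genuinely rigorous rather than heuristic — specifically, controlling the $o_\P$ slack uniformly so that the mean-shift argument actually transfers to the \emph{suprema} $\widehat t_{n,1}$ and $\widetilde M_1(A_{2,n_1})$, whose dependence on the path is non-Lipschitz in the sup norm only in a mild way but is delicate near the Gumbel fluctuation scale $\epsilon_{2,n_1}^{1/2}$. The cleanest route is probably to work with the conditional distribution directly: condition on $(\widetilde S_1(\mu),\widetilde S_0(\mu))=(a,b)$, write $\widetilde S_1(t)=r_{\rm NSE}(\mu,t)\,a+\widetilde S_1^{(a)}(t)$ where $\widetilde S_1^{(a)}$ has the residual Gaussian law, bound $\sup_{|t|\in A_{2,n_1}}|r_{\rm NSE}(\mu,t)\,a|=o_\P(\epsilon_{2,n_1}^{1/2})$ uniformly for $a$ in compact sets, and absorb this into the already-present $o_\P$ terms in \eqref{eq:erci} and Lemma~\ref{lem:LS3}(\ref{lem:LS3b}); then the conditional limit law of $\big(\overline{S_{n,1}(\widehat t_{n,1})},S_{n,0}(\widehat t_{n,1})\big)$ is the Gumbel$\,\times\,N(0,1)$ law of Theorem~\ref{thm:null}, free of $(a,b)$, and asymptotic independence follows by a standard conditioning/characteristic-function argument.
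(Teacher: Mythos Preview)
Your outline is correct, and you have identified the right mechanism: $\mu$ is fixed while $A_{2,n_1}$ drifts to infinity, so the covariance between evaluations at $\mu$ and evaluations on $A_{2,n_1}$ vanishes. The paper's proof, however, executes this differently. Rather than conditioning on $(Y_1(\mu),Y_0(\mu))=(a,b)$ and pushing a mean shift of order $\sup_{|t|\in A_{2,n_1}}r_{\rm NSE}(\mu,t)$ through the extreme-value machinery, it works directly with CDFs: it invokes a bivariate normal comparison inequality (the arcsin bound of Li and Shao) to get
\[
\big|\P\{Y_0(t)\le y_3,\,Y_0(\mu)\le y_4\}-\Phi(y_3)\Phi(y_4)\big|\le \tfrac{1}{2\pi}\arcsin r_{\rm NSE}(t,\mu),
\]
integrates this against the conditional law of $\widehat t_{n,1}$, and for the $D_1$ side simply quotes the asymptotic independence of $\overline{Y_1(\widehat t_{n,1})}$ and $Y_1(\mu)$ from \citet{MR2265342}; the four pieces are then assembled into the full 4-way independence by a triangle-inequality argument on joint CDFs. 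Your mean-shift route is more self-contained (it would reprove the Aza\"is et al.\ result along the way) and more conceptual, but it carries the extra burden you already flagged: you must verify that the Gumbel limit for $\sup_{|t|\in A_{2,n_1}}\widetilde S_1(t)$ and the location of its argmax are stable under an $o(\epsilon_{2,n_1}^{1/2})$ mean perturbation \emph{and} under the covariance perturbation of the residual process, which needs a normal-comparison lemma on top of the deterministic shift bound. The paper's modular bookkeeping avoids touching the extreme-value theory again. One small slip in your write-up: you mix the stationary process $\widetilde S_k$ (SE covariance $\widetilde\rho$) with the NSE covariance $r_{\rm NSE}$ of the standardized process $Y_k$; either works, but keep the pair consistent when you write the conditional decomposition.
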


\begin{proof}[Proof of Lemma~\ref{lem:azaisplus}]

Taking into account Lemma~\ref{lem:SmallDiff} below, to prove the lemma, it suffices to show
\begin{equation}\label{eq:AsymInd1}
\Big(\,\overline{Y_1(\widehat t_{n,1})}, Y_0(\widehat t_{n,1})\Big)~~~\text{and}~~~m_1^{1/2}Y_1(\mu)+m_0^{1/2}Y_0(\mu)
\end{equation}
are asymptotically independent, where
\begin{align*}
\overline{Y_1(\widehat t_{n,1})}
&:=\sqrt{\log_{(2)}n_1}\times Y_1(\widehat t_{n,1})-\log_{(2)}n_1 + \log(\sqrt{2}\:\!\pi).
\end{align*}
To this end, we aim to prove a stronger claim that
\begin{equation}\label{eq:AsymInd2}
\Big(\,\overline{Y_1(\widehat t_{n,1})},~Y_0(\widehat t_{n,1})\Big)~\text{and}~\Big(Y_1(\mu),~Y_0(\mu)\Big)
~\text{are asymptotically independent.}
\end{equation}
First, we show that
\begin{equation}\label{eq:AsymInd3}
\Big(\,\overline{Y_1(\widehat t_{n,1})},~Y_1(\mu)\Big)~\text{and}~\Big(Y_0(\widehat t_{n,1}),~Y_0(\mu)\Big)
~\text{are asymptotically independent}
\end{equation}
by the following arguments. 
Intuitively, as the latter random vector depends on the former only through $\widehat t_{n,1}$, Claim~\eqref{eq:AsymInd3} follows from Lemma~\ref{lem:within_indep} below.
However, a more strict and basic proof can be given by recalling the fact that, with probability tending to 1, $|\widehat t_{n,1}|\in A_{2,n_1}$, where 
$A_{2,n}=\big[2\sqrt{\smash[b]\log_{(3)} n}, \sqrt{(\log n)/2}-2\sqrt{\smash[b]\log_{(2)} n}\,\big]$.
Then it suffices to prove the claim assuming the event $|\widehat t_{n,1}|\in A_{2,n_1}$ happens. 
For $y_1,y_2,y_3,y_4\in\R$, we can write
\begin{align*}\yestag\label{eq:CP1}
   &\P\Big\{Y_0(\widehat t_{n,1})\le y_3,~Y_0(\mu)\le y_4 \given \overline{Y_1(\widehat t_{n,1})}\le y_1,~Y_1(\mu)\le y_2\Big\}\\
=\;&\int \P\Big\{Y_0(t)\le y_3,~Y_0(\mu)\le y_4\Big\}\d\P_{\widehat t_{n,1}\given \overline{Y_1(\widehat t_{n,1})}\le y_1,~Y_1(\mu)\le y_2}(t)\\
=\;&\int \P\Big\{Y_0(t)\le y_3\Big\}\P\Big\{Y_0(\mu)\le y_4\Big\}\d\P_{\widehat t_{n,1}\given \overline{Y_1(\widehat t_{n,1})}\le y_1,~Y_1(\mu)\le y_2}(t)+R_{n,{\rm cond}}\\
=\;&\Phi(y_3)\Phi(y_4)+R_{n,{\rm cond}},
\end{align*}
where by Theorem 2.1 in \citet{MR1902188},
\begin{align*}\yestag\label{eq:CP1-1}
0\le\;& R_{n,{\rm cond}}\\
 :=\;&\int \Big[\P\Big\{Y_0(t)\le y_3,~Y_0(\mu)\le y_4\Big\}-\P\Big\{Y_0(t)\le y_3\Big\}\P\Big\{Y_0(\mu)\le y_4\Big\}\Big] \\
&\hspace{17.2em}\d\P_{\widehat t_{n,1}\given \overline{Y_1(\widehat t_{n,1})}\le y_1,~Y_1(\mu)\le y_2}(t)\\
\le\;&\int \frac{1}{2\pi}\arcsin r_{\rm NSE}(t, \mu)\exp\Big(-\frac{y_3^2+y_4^2}{2}\Big)\d\P_{\widehat t_{n,1}\given \overline{Y_1(\widehat t_{n,1})}\le y_1,~Y_1(\mu)\le y_2}(t)\\
\le\;&\frac{1}{2\pi}\arcsin r_{\rm NSE}\Big(2\sqrt{\smash[b]\log_{(3)} n_1}, \mu\Big)\exp\Big(-\frac{y_3^2+y_4^2}{2}\Big).
\end{align*}
Similarly, we can prove that
\begin{align*}\yestag\label{eq:CP2}
0
&\le \P\Big\{Y_0(\widehat t_{n,1})\le y_3,~Y_0(\mu)\le y_4\Big\}-\Phi(y_3)\Phi(y_4)\\
&= \P\Big\{Y_0(\widehat t_{n,1})\le y_3,~Y_0(\mu)\le y_4\Big\}-\P\Big\{Y_0(\widehat t_{n,1})\le y_3\Big\}\P\Big\{Y_0(\mu)\le y_4\Big\}\\
&\le \frac{1}{2\pi}\arcsin r_{\rm NSE}\Big(2\sqrt{\smash[b]\log_{(3)} n_1}, \mu\Big)\exp\Big(-\frac{y_3^2+y_4^2}{2}\Big).
\end{align*}
Combining \eqref{eq:CP1}--\eqref{eq:CP2} yields
\begin{align*}\yestag\label{eq:CP3}
     &\Big\lvert\P\Big\{Y_0(\widehat t_{n,1})\!\le\! y_3,Y_0(\mu)\!\le\! y_4 \Biggiven \overline{Y_1(\widehat t_{n,1})}\!\le\! y_1,Y_1(\mu)\!\le\! y_2\Big\}-\P\Big\{Y_0(\widehat t_{n,1})\!\le\! y_3,Y_0(\mu)\!\le\! y_4\Big\}\Big\rvert\\
\le\;&\frac{1}{2\pi}\arcsin r_{\rm NSE}\Big(2\sqrt{\smash[b]\log_{(3)} n_1}, \mu\Big)\exp\Big(-\frac{y_3^2+y_4^2}{2}\Big),
\end{align*}
which concludes the proof of Claim~\eqref{eq:AsymInd3}.
In view of \eqref{eq:CP2}, we also have 
\begin{equation}\label{eq:AsymInd4}
Y_0(\widehat t_{n,1})~\text{and}~Y_0(\mu)
~\text{are asymptotically independent.}
\end{equation}
The proof of Lemma~1 in \citet[pages~795--797]{MR2265342} showed that
\begin{equation}\label{eq:AsymInd5}
\overline{Y_1(\widehat t_{n,1})}~\text{and}~Y_1(\mu)
~\text{are asymptotically independent.}
\end{equation}
It follows from Claims~\eqref{eq:AsymInd3}, \eqref{eq:AsymInd4}, and \eqref{eq:AsymInd5} that Claim~\eqref{eq:AsymInd2} holds. Formally, for $y_1,y_2,y_3,y_4\in\R$,
\begin{align*}\yestag\label{eq:CP4}
&\Big\lvert\P\Big\{\overline{Y_1(\widehat t_{n,1})}\le y_1,
                                Y_0(\widehat t_{n,1})\le y_3,
                                Y_1(\mu)\le y_2,
                                Y_0(\mu)\le y_4\Big\}\\
&\qquad-\P\Big\{\overline{Y_1(\widehat t_{n,1})}\le y_1\Big\}
        \P\Big\{          Y_0(\widehat t_{n,1})\le y_3\Big\}
        \P\Big\{Y_1(\mu)\le y_2\Big\}
        \P\Big\{Y_0(\mu)\le y_4\Big\}\Big\rvert\\
\le\;&\Big\lvert\P\Big\{\overline{Y_1(\widehat t_{n,1})}\le y_1,
                                  Y_0(\widehat t_{n,1})\le y_3,
                                  Y_1(\mu)\le y_2,
                                  Y_0(\mu)\le y_4\Big\}\\
&\qquad-\P\Big\{\overline{Y_1(\widehat t_{n,1})}\le y_1,
                          Y_1(\mu)\le y_2\Big\}
        \P\Big\{          Y_0(\widehat t_{n,1})\le y_3,
                          Y_0(\mu)\le y_4\Big\}\Big\rvert\\
&\quad+\Big\lvert \P\Big\{\overline{Y_1(\widehat t_{n,1})}\le y_1,
                                    Y_1(\mu)\le y_2\Big\}
             \!-\!\P\Big\{\overline{Y_1(\widehat t_{n,1})}\le y_1\Big\}
                  \P\Big\{          Y_1(\mu)\le y_2\Big\}\Big\rvert\\
&\qquad\times\Big\lvert\P\Big\{          Y_0(\widehat t_{n,1})\le y_3,
                                   Y_0(\mu)\le y_4\Big\}\Big\rvert\\
&\quad+\Big\lvert\P\Big\{\overline{Y_1(\widehat t_{n,1})}\!\le\! y_1\Big\}
                 \P\Big\{          Y_1(\mu)\!\le\! y_2\Big\}\Big\rvert\\
&\qquad\times\Big\lvert  \P\Big\{Y_0(\widehat t_{n,1})\!\le\! y_3,
                         Y_0(\mu)\!\le\! y_4\Big\}
                -\P\Big\{Y_0(\widehat t_{n,1})\!\le\! y_3\Big\}
                 \P\Big\{Y_0(\mu)\!\le\! y_4\Big\}\Big\rvert\\
\le\;&\Big\lvert\P\Big\{Y_0(\widehat t_{n,1})\le y_3,
                        Y_0(\mu)\le y_4 \Biggiven 
              \overline{Y_1(\widehat t_{n,1})}\le y_1,
                        Y_1(\mu)\le y_2\Big\}
               -\P\Big\{Y_0(\widehat t_{n,1})\le y_3,
                        Y_0(\mu)\le y_4\Big\}\Big\rvert\\
&\quad+\Big\lvert \P\Big\{\overline{Y_1(\widehat t_{n,1})}\le y_1,
                                    Y_1(\mu)\le y_2\Big\}
                 -\P\Big\{\overline{Y_1(\widehat t_{n,1})}\le y_1\Big\}
                            \P\Big\{Y_1(\mu)\le y_2\Big\}\Big\rvert\\
&\quad+\Big\lvert \P\Big\{          Y_0(\widehat t_{n,1})\le y_3,Y_0(\mu)\le y_4\Big\}
                 -\P\Big\{          Y_0(\widehat t_{n,1})\le y_3\Big\}
                  \P\Big\{          Y_0(\mu)\le y_4\Big\}\Big\rvert\\
\to\;& 0,
\end{align*}
where the last step is by putting Equation~\eqref{eq:CP3}, Equation~(21) in \citet{MR2265342},
and Equation~\eqref{eq:CP2} together. The independence between $Y_1(\mu)$ and $Y_0(\mu)$ yields
\begin{equation*}\yestag\label{eq:CP5}
          \P\Big\{              Y_1(\mu)\le y_2,
                                Y_0(\mu)\le y_4\Big\}
=       \P\Big\{Y_1(\mu)\le y_2\Big\}
        \P\Big\{Y_0(\mu)\le y_4\Big\}.
\end{equation*}
Lastly, using the triangle inequality yields, for $y_1,y_3\in\R$, $M>0$,
\begin{align*}\yestag\label{eq:CP6}
&\Big\lvert\P\Big\{\overline{Y_1(\widehat t_{n,1})}\le y_1,
                                Y_0(\widehat t_{n,1})\le y_3\Big\}
-\P\Big\{\overline{Y_1(\widehat t_{n,1})}\le y_1\Big\}
        \P\Big\{          Y_0(\widehat t_{n,1})\le y_3\Big\}\Big\rvert\\
\le\;&\Big\lvert\P\Big\{\overline{Y_1(\widehat t_{n,1})}\le y_1,
                                Y_0(\widehat t_{n,1})\le y_3,
                                Y_1(\mu)\le M,
                                Y_0(\mu)\le M\Big\}\\
&\qquad-\P\Big\{\overline{Y_1(\widehat t_{n,1})}\le y_1\Big\}
        \P\Big\{          Y_0(\widehat t_{n,1})\le y_3\Big\}
        \P\Big\{Y_1(\mu)\le M\Big\}
        \P\Big\{Y_0(\mu)\le M\Big\}\Big\rvert\\
&+2\P\Big\{Y_1(\mu)> M\Big\}+2\P\Big\{Y_0(\mu)> M\Big\},
\end{align*}
where $\P\{Y_1(\mu)> M\}=\P\{Y_0(\mu)> M\}=\Phi(1-M)$.
Putting \eqref{eq:CP4}--\eqref{eq:CP6} together concludes
\begin{align*}
&\Big\lvert\P\Big\{\overline{Y_1(\widehat t_{n,1})}\le y_1,
                             Y_0(\widehat t_{n,1})\le y_3,
                             Y_1(\mu)\le y_2,
                             Y_0(\mu)\le y_4\Big\}\\
&\qquad-\P\Big\{\overline{Y_1(\widehat t_{n,1})}\le y_1,
                          Y_0(\widehat t_{n,1})\le y_3\Big\}
        \P\Big\{          Y_1(\mu)\le y_2,
                          Y_0(\mu)\le y_4\Big\}\Big\rvert\to0.
\end{align*}
This completes the proof.
\end{proof}

\begin{lemma}\label{lem:SmallDiff}
Under the null hypothesis, it holds that $|Y_k(\widehat t_{n,1})-S_{n,k}(\widehat t_{n,1})|= o_{\P}(\epsilon_{2,n}^{1/2}),~k=0,1,$
where $Y_0$ and $Y_1$ defined respectively in \eqref{eq:defY_0} and \eqref{eq:defY_1} are two independent zero-mean non-stationary Gaussian processes with unit variance and covariance function $r_{\rm NSE}(s,t)$ defined in \eqref{eq:NSE}. 
\end{lemma}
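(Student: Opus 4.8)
The plan is to route the comparison through the auxiliary Gaussian processes $\widetilde S_k(t)=S_k(t)+\widetilde X e^{-t^2/2}$ introduced above, exploiting that at a \emph{large} argument both the extra coupling term $\widetilde X e^{-t^2/2}$ and the standardizing factor $(1-e^{-t^2})^{-1/2}$ are essentially trivial. By Lemma~\ref{lem:LS3}(\ref{lem:LS3a}) and~(\ref{lem:LS3e}) the event $\mathcal{E}_n:=\{|\widehat t_{n,1}|\in A_{2,n_0}\cap A_{2,n_1}\}$ has probability tending to $1$, so it suffices to argue on $\mathcal{E}_n$, on which $|\widehat t_{n,1}|\ge 2\sqrt{\log_{(3)}n_k}$ and hence $e^{-\widehat t_{n,1}^2}\le(\log_{(2)}n_k)^{-4}$ and $e^{-\widehat t_{n,1}^2/2}\le(\log_{(2)}n_k)^{-2}$; since $n_0,n_1=\Theta(n)$ we may freely replace $\log_{(2)}n_k$ and $\epsilon_{2,n_k}$ by $\log_{(2)}n$ and $\epsilon_{2,n}$ inside $o_\P$. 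I would then split
\[
Y_k(\widehat t_{n,1})-S_{n,k}(\widehat t_{n,1})=\bigl\{Y_k(\widehat t_{n,1})-\widetilde S_k(\widehat t_{n,1})\bigr\}+\bigl\{\widetilde S_k(\widehat t_{n,1})-S_{n,k}(\widehat t_{n,1})\bigr\}
\]
and bound the two bracketed terms separately.

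For the second bracket I would invoke the uniform Hungarian-type coupling: for $k=0$ this is precisely Equation~\eqref{eq:jianjie}, $\sup_{|t|\in A_{2,n_0}}|S_{n,0}(t)-\widetilde S_0(t)|=o_\P(\epsilon_{2,n_0}^{1/2})$, and for $k=1$ the same bound over $A_{2,n_1}$ follows from the identical argument, cf.\ Equation~(39) of \citet{10015228522} and \citet{MR2058122}, applied to the subsample $D_1$. Since on $\mathcal{E}_n$ the point $\widehat t_{n,1}$ lies in the interval indexed by the appropriate subsample, evaluating the uniform bound there yields $\widetilde S_k(\widehat t_{n,1})-S_{n,k}(\widehat t_{n,1})=o_\P(\epsilon_{2,n}^{1/2})$.

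For the first bracket I would expand $\widetilde S_k(t)-Y_k(t)=S_k(t)\bigl\{1-(1-e^{-t^2})^{-1/2}\bigr\}+\widetilde X e^{-t^2/2}$. On $\mathcal{E}_n$ we have $|1-(1-e^{-\widehat t_{n,1}^2})^{-1/2}|=O((\log_{(2)}n)^{-4})$ and $|\widetilde X|\,e^{-\widehat t_{n,1}^2/2}=O_\P((\log_{(2)}n)^{-2})$, so it only remains to control $S_k(\widehat t_{n,1})$, and here the two indices behave differently. For $k=1$, the point $\widehat t_{n,1}$ is constructed from $D_1$ and hence is not independent of $S_1$; I would use the uniform supremum bound $\sup_{|t|\in A_{2,n_1}}|\widetilde S_1(t)|=O_\P(\sqrt{\log_{(2)}n_1})$ (a consequence of the extreme-value behavior in Proposition~\ref{prop:theoremBC}, or, more economically, of Lemma~\ref{lem:LS3}(\ref{lem:LS3b}) together with the coupling of the previous paragraph), which gives $|S_1(\widehat t_{n,1})|=O_\P(\sqrt{\log_{(2)}n_1})$. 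For $k=0$, the point $\widehat t_{n,1}$ is independent of $S_0$ and $\mathrm{Var}\,S_0(t)=\rho(t,t)\le 1$, so conditioning on $\widehat t_{n,1}$ gives simply $S_0(\widehat t_{n,1})=O_\P(1)$. In either case $|S_k(\widehat t_{n,1})|=o_\P((\log_{(2)}n)^{7/2})$ (much cruder bounds would do), so the first bracket is $o_\P((\log_{(2)}n)^{-7/2})+O_\P((\log_{(2)}n)^{-2})=o_\P(\epsilon_{2,n}^{1/2})$. Adding the two estimates proves the asserted $o_\P(\epsilon_{2,n}^{1/2})$ bound; the remaining assertions about $Y_0,Y_1$ — that they are independent zero-mean unit-variance Gaussian processes with covariance $r_{\rm NSE}$ — are immediate from the definitions \eqref{eq:defY_0}--\eqref{eq:defY_1}, the covariance $\rho$ of $S_0$ and $S_1$, and the independence $B_0\indep B_1$ built into the Hungarian construction.

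The step I expect to be the main obstacle is the passage from the \emph{uniform} couplings to their evaluation at the \emph{data-dependent} argument $\widehat t_{n,1}$: one must make sure that the $o_\P$ bounds in Equation~\eqref{eq:jianjie} and its $k=1$ analog hold uniformly over the entire interval $A_{2,n_k}$ and that $\widehat t_{n,1}$ falls, with probability tending to $1$, inside the interval matching the subsample index — $A_{2,n_0}$ for the $k=0$ coupling and $A_{2,n_1}$ for the $k=1$ coupling — which is exactly the content of Lemma~\ref{lem:LS3}(\ref{lem:LS3a}) and~(\ref{lem:LS3e}). Once this is in place, all the residual estimates are elementary.
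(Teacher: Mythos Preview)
Your argument is correct and follows the same overall strategy as the paper: establish a uniform-in-$t$ bound $\sup_{|t|\in A_{2,n_k}}|Y_k(t)-S_{n,k}(t)|=o_\P(\epsilon_{2,n}^{1/2})$ and then evaluate at $\widehat t_{n,1}$, which lies in the relevant interval with probability tending to $1$. The difference is only in how the uniform bound is obtained. The paper simply cites it from the proof of Theorem~5 in \citet{MR2265342} and is done in two lines; you instead derive it internally by routing through $\widetilde S_k$, using the Hungarian coupling \eqref{eq:jianjie} (and its $k=1$ analog) for $\widetilde S_k-S_{n,k}$ and an elementary Taylor-type estimate of $(1-e^{-t^2})^{-1/2}-1$ together with $|\widetilde X|e^{-t^2/2}$ for $Y_k-\widetilde S_k$. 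This buys self-containedness at the cost of a longer write-up; the paper's version buys brevity at the cost of a black-box reference. Your careful distinction between $A_{2,n_0}$ and $A_{2,n_1}$, invoking both Lemma~\ref{lem:LS3}(\ref{lem:LS3a}) and~(\ref{lem:LS3e}), is in fact slightly more precise than the paper's proof, which only cites~(\ref{lem:LS3a}).

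One cosmetic point: in the first bracket you assert $|S_k(\widehat t_{n,1})|=o_\P((\log_{(2)}n)^{7/2})$ and then write the resulting product as $o_\P((\log_{(2)}n)^{-7/2})$; the latter actually follows from the sharper bounds $O_\P(\sqrt{\log_{(2)}n})$ (for $k=1$) and $O_\P(1)$ (for $k=0$) that you already established, not from the crude $o_\P((\log_{(2)}n)^{7/2})$. This is harmless for the conclusion since any bound of the form $o_\P((\log_{(2)}n)^{-1/2})$ suffices, but you may want to tighten the bookkeeping.
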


\begin{proof}[Proof of Lemma~\ref{lem:SmallDiff}]
Under the null hypothesis, in view of the Proof of Theorem~5 in \citet{MR2265342}, 
\begin{equation}\label{eq:SmallDiff1}
\sup_{|t|\in A_{2,n_k}}|Y_k(t)-S_{n,k}(t)|= o_{\P}(\epsilon_{2,n}^{1/2}),~~~k=0,1.
\end{equation} 
Since $\widehat t_{n,1}\in A_{2,n_1}$ with probability tending to 1 (Lemma~\ref{lem:LS3}(\ref{lem:LS3a})), the desired result follows. 
\end{proof}

\begin{lemma}\label{lem:within_indep}
It holds that
\[
\overline{Y_1(\widehat t_{n,1})}
~~~\text{and}~~~
\widehat t_{n,1}
\]
are asymptotically independent under $\P_{H_0}$.
\end{lemma}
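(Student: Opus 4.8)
The plan is to reduce the assertion to a statement purely about the limiting Gaussian field $\widetilde S_1$, and then to bisect the parameter window $A_{2,n_1}=[c_{1,n_1},c_{2,n_1}]$ at the fraction $\ell$, reading off everything from the joint Gumbel limit of the two rescaled sub-maxima already obtained inside the proof of Lemma~\ref{lem:LS3}(\ref{lem:LS3d}).

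\emph{Step 1: reduction to $\widetilde S_1$.} First I would combine Lemma~\ref{lem:SmallDiff}, Lemma~\ref{lem:LS3}(\ref{lem:LS3b}), and the strong-approximation estimates used in the proof of Lemma~\ref{lem:LS3}(\ref{lem:LS3d}) (namely $S_{n,1}=\widetilde S_1+o_{\P}(\epsilon_{2,n_1}^{1/2})$ uniformly on $A_{2,n_1}$, together with the fact that $S_{n,1}$ attains its supremum over $A_{2,n_1}$) to get $M_{n,1}=\widetilde M_1(A_{2,n_1})+o_{\P}(\epsilon_{2,n_1}^{1/2})$, whence
\[
\overline{Y_1(\widehat t_{n,1})}
=\sqrt{\log_{(2)}n_1}\,\widetilde M_1(A_{2,n_1})-\log_{(2)}n_1+\log(\sqrt2\,\pi)+o_{\P}(1)
=a_{2T_{2,n_1}}\!\big(\widetilde M_1(A_{2,n_1})-b_{2T_{2,n_1}}\big)+o_{\P}(1),
\]
with $a_{\cdot},b_{\cdot},T_{2,n_1}$ as defined in the proof of Lemma~\ref{lem:LS3}(\ref{lem:LS3d}). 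The second equality is an elementary asymptotic computation from $T_{2,n_1}\sim\sqrt{(\log n_1)/2}$: the lower-order correction of $a_{2T_{2,n_1}}$ (of order $(\log_{(2)}n_1)^{-1/2}$), multiplied by $\widetilde M_1(A_{2,n_1})\sim\sqrt{\log_{(2)}n_1}$, precisely cancels the $\log\sqrt2$ coming from the squared-exponential normalization.

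\emph{Step 2: bisection.} Fix $\ell\in(0,1)$, put $\ell^*=1-\ell$, and let $N^{\sqsupset},N^{\sqsubset}$ denote the variables $Y^{\sqsupset}_{T_{2,n_1}},Y^{\sqsubset}_{T_{2,n_1}}$ from the proof of Lemma~\ref{lem:LS3}(\ref{lem:LS3d}), which converge jointly to two independent standard Gumbel variables $Y^{\sqsupset},Y^{\sqsubset}$. Exactly as in that proof, with probability tending to $1$ the event $\{|\widehat t_{n,1}|\in A_{2,n_1}^{\sqsupset}(\ell)\}$ coincides, up to a set of probability $o(1)$, with $\{\widetilde M_1(A_{2,n_1}^{\sqsupset}(\ell))\ge\widetilde M_1(A_{2,n_1}^{\sqsubset}(\ell))\}$ (via the sandwich of $\lambda_{n,1}(A_{2,n_1}^{\sqsupset}(\ell))\ge\lambda_{n,1}(A_{2,n_1}^{\sqsubset}(\ell))$ between $\widetilde M_1(A_{2,n_1}^{\sqsupset}(\ell))\pm c\,\epsilon_{2,n_1}^{1/2}\ge\widetilde M_1(A_{2,n_1}^{\sqsubset}(\ell))$ followed by $c\downarrow0$); on that event $\widetilde M_1(A_{2,n_1})=\widetilde M_1(A_{2,n_1}^{\sqsupset}(\ell))$, so Step~1 together with $a_{2T_{2,n_1}}/a_{2\ell T_{2,n_1}}\to1$ and $a_{2T_{2,n_1}}(b_{2\ell T_{2,n_1}}-b_{2T_{2,n_1}})\to\log\ell$ yields $\overline{Y_1(\widehat t_{n,1})}=N^{\sqsupset}+\log\ell+o_{\P}(1)$ there. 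Using the joint Gumbel limit of $(N^{\sqsupset},N^{\sqsubset})$ and the continuity of the Gumbel law (to drop the boundary event $\widetilde M_1(A_{2,n_1}^{\sqsupset}(\ell))=\widetilde M_1(A_{2,n_1}^{\sqsubset}(\ell))$), I would then obtain, for every $y\in\R$,
\[
\P\big\{\overline{Y_1(\widehat t_{n,1})}\le y,\ |\widehat t_{n,1}|\in A_{2,n_1}^{\sqsupset}(\ell)\big\}
\longrightarrow
\P\big\{Y^{\sqsupset}+\log\ell\le y,\ Y^{\sqsupset}-Y^{\sqsubset}\ge\log(\ell^*/\ell)\big\}.
\]

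\emph{Step 3: evaluation and conclusion.} Conditioning on $Y^{\sqsupset}$, using $1+\ell^*/\ell=1/\ell$ and substituting $v=e^{-u}/\ell$ evaluates the right-hand side as $\ell\exp\{-\exp(-y)\}$. Since $\overline{Y_1(\widehat t_{n,1})}\stackrel{\sf d}{\longrightarrow}$ standard Gumbel (Proposition~\ref{prop:theoremBC} and Step~1) and $\P\{|\widehat t_{n,1}|\in A_{2,n_1}^{\sqsupset}(\ell)\}\to\ell$ (Lemma~\ref{lem:LS3}(\ref{lem:LS3d})), this limit factors as the product of the two marginal limits for every $\ell\in(0,1)$ and $y\in\R$; as $\P\{|\widehat t_{n,1}|\notin A_{2,n_1}\}\to0$ and the intervals $A_{2,n_1}^{\sqsupset}(\ell)$ pin down the (asymptotically uniform) law of the relative position of $\widehat t_{n,1}$ in $A_{2,n_1}$, this is exactly the claimed asymptotic independence. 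The main obstacle is the order bookkeeping: each $o_{\P}(\epsilon_{2,n_1}^{1/2})$ error gets amplified by the diverging factor $a_{\cdot}\asymp\sqrt{\log_{(2)}n_1}$, so one must keep all such errors genuinely $o_{\P}$ and run the ``$\pm c\,\epsilon_{2,n_1}^{1/2}$, then $c\downarrow0$'' device exactly as in Lemma~\ref{lem:LS3}(\ref{lem:LS3d}); a secondary point is the careful matching of the two normalizations in Step~1, where the $\log\sqrt2$ shift must be tracked through the lower-order terms of $a_{2T_{2,n_1}}$ and $b_{2T_{2,n_1}}$.
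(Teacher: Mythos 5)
Your proposal is correct and follows essentially the same route as the paper's proof: both reduce to the Gaussian field $\widetilde S_1$, bisect $A_{2,n_1}$ at fraction $\ell$, invoke the joint Gumbel limit of $(Y^{\sqsupset}_{T_{2,n_1}},Y^{\sqsubset}_{T_{2,n_1}})$, and evaluate the resulting integral to get $\ell\exp\{-\exp(-x)\}$. Your version is, if anything, slightly more careful with the factor-of-$2$ bookkeeping between $a_{T_{2,n_1}},b_{T_{2,n_1}}$ and $a_{2T_{2,n_1}},b_{2T_{2,n_1}}$ (which the paper's display glosses over, and which is needed to match the $\log\sqrt2\,\pi$ in $\overline{Y_1}$ exactly rather than up to a constant shift).
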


\begin{proof}[Proof of Lemma~\ref{lem:within_indep}]
Recall the notations defined in \eqref{eq:extra_notations}.
We have for $x\in\R$ and $0<\ell<1$,
\begin{align*}
&\P\{a_{T_{2,n_1}}(Y_1(\widehat t_{n,1})-b_{T_{2,n_1}})\le x,\ 
    \widehat t_{n,1}\in A_{2,n_1}^{\sqsupset}(\ell)\} \\
=\;&\P\Big\{Y_1(\widehat t_{n,1})\le \frac{x}{a_{T_{2,n_1}}}+b_{T_{2,n_1}},\ 
       \lambda_{n,1}\big(A_{2,n_1}^{\sqsupset}(\ell)\big) \ge 
       \lambda_{n,1}\big(A_{2,n_1}^{\sqsubset}(\ell)\big)\Big\} \\
=\;&\P\Big\{\widetilde M_1\big(A_{2,n_1}\big)\le \frac{x}{a_{T_{2,n_1}}}+b_{T_{2,n_1}},\ 
       \widetilde M_1\big(A_{2,n_1}^{\sqsupset}(\ell)\big) \ge 
       \widetilde M_1\big(A_{2,n_1}^{\sqsubset}(\ell)\big)\Big\} +o(1)\\
=\;&\P\Big\{\frac{Y_{T_{2,n_1}}^{\sqsupset}}{a_{\ell T_{2,n_1}}}+b_{\ell T_{2,n_1}}\!\!\le\frac{x}{a_{T_{2,n_1}}}+b_{T_{2,n_1}},
Y_{T_{2,n_1}}^{\sqsupset}\!\!-\frac{a_{\ell T_{2,n_1}}}{a_{\ell^* T_{2,n_1}}}Y_{T_{2,n_1}}^{\sqsubset} \!\!\ge a_{\ell T_{2,n_1}}(b_{\ell^* T_{2,n_1}}\!\!-b_{\ell T_{2,n_1}})\Big\}+o(1)\\
=\;&\P\Big\{Y^{\sqsupset}\le x+\log\frac{1}{\ell},\ 
Y^{\sqsupset}-Y^{\sqsubset} \ge \log\frac{\ell^*}{\ell}\Big\}+o(1)\\
=\;&\int_{-\infty}^{x+\log\frac{1}{\ell}} 
\exp\Big[-\exp\Big\{-\Big(y-\log\frac{\ell^*}{\ell}\Big)\Big\}\Big] 
\exp[-\{y+\exp(-y)\}]\d y +o(1)\\
=\;&\ell \exp\{-\exp(-x)\} +o(1).
\end{align*}
Here $Y^{\sqsupset}$ and $Y^{\sqsubset}$ are two independent standard Gumbel random variables.                    
\end{proof}

\section{Power analysis II}\label{sec:5}

In this section, we consider the following sequence of local alternative hypotheses:
\begin{equation}\label{eq:def_alter2}
H_{1,n}^{\#}:~
p = q_n,~
t=\mu_n,~
\text{with}~q_n \mu_n = \gamma (n^{-1}\log\log n)^{1/2}~\text{and}~\mu_n = O\{(\log n)^{-1/2}\}
\end{equation}
of the model $f_{p,t}(x) = (1-p) \phi(x;0,1)+p \phi(x;t,1)$. 
The following result, due to \citet{MR2146091}, shows that the LRT can distinguish the null hypothesis from the local alternative at the rate $(n^{-1}\log\log n)^{1/2}$.  
In addition, the rate $(n^{-1}\log\log n)^{1/2}$ is optimal in the sense that there is a dramatic change in the power of the LRT at $|\gamma|=1$.

\begin{proposition}[Theorem~2.1 in \citet{MR2146091}]\label{prop:local2}
Under the sequence of local alternative hypotheses $H_{1,n}^{\#}$ given in \eqref{eq:def_alter2}, the asymptotic local power of the LRT is given by
\[
\lim_{n\to\infty}\P_{H_{1,n}^{\#}}\{\lambda_n > c_{n,\alpha}\}=
\begin{cases}
\alpha,         & \text{if}~|\gamma|<1,\\
(1+\alpha)/2,   & \text{if}~|\gamma|=1,\\
1,              & \text{if}~|\gamma|>1.
\end{cases}
\]
\end{proposition}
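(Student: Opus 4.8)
The plan is to reduce the likelihood ratio statistic to the supremum of a standardized empirical process and then to track the deterministic drift that the local alternative adds to that process. I would work with the $\ast$-reparameterization of Section~\ref{sec:5}, $\eta^\ast = p(e^{t^2}-1)^{1/2}$ and $W_i^\ast(t) = (e^{t^2}-1)^{-1/2}Z_i(t)$, so that $W_i^\ast(t)$ has unit variance under $H_0$ and $W_i^\ast(0)=X_i$; put $S_n^\ast(t) := n^{-1/2}\sum_{i=1}^n W_i^\ast(t)$, so that $S_n^\ast(0)=\sqrt n\,\bar X$. The first step is to show, by repeating the Liu--Shao-type quadratic-approximation argument behind Proposition~\ref{prop:theoremLS} but now under $H_{1,n}^{\#}$, that $\lambda_n = \sup_{t\in\R}\{S_n^\ast(t)\}_+^2 + o_{\P}(1)$. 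This is legitimate under the alternative because the data form a finite Gaussian mixture with a vanishing per-point shift $\mu_n=O((\log n)^{-1/2})$: after centering $S_n^\ast$, the extreme-order-statistic bounds, the variance estimates, and the Hungarian strong approximation used in the null case all carry over essentially verbatim.

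The second step is to compute the mean $\delta_n(t):=\E_{H_{1,n}^{\#}}[S_n^\ast(t)]$. For $t\ne0$ this equals $\sqrt n\,q_n(e^{t\mu_n}-1)e^{-t^2/2}(1-e^{-t^2})^{-1/2}$; since $\mu_n\to0$ while $\sqrt n\,q_n\mu_n=\gamma(\log\log n)^{1/2}$ by construction, $\delta_n(t)=\gamma(\log\log n)^{1/2}g(t)(1+o(1))$ with $g(t):=t\,e^{-t^2/2}(1-e^{-t^2})^{-1/2}$, whereas $\delta_n(0)=\sqrt n\,\E_{H_{1,n}^{\#}}[\bar X]=\gamma(\log\log n)^{1/2}$. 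Substituting $v=t^2$ one sees $g(t)^2=v/(e^v-1)$ is strictly decreasing with $\sup g=1$ attained only in the limit $t\to0$, and, thanks to the factor $e^{-t^2/2}$, $\sup_{|t|\ge c_{1,n}}\delta_n(t)=\delta_n(c_{1,n})=o(\epsilon_{2,n}^{1/2})$. Hence $\sup_t\{S_n^\ast(t)\}_+^2$ splits into three regimes. (i) On $|t|\ge c_{1,n}$ the drift is negligible, so the centered process has the null extreme-value behavior of Propositions~\ref{prop:theoremBC}--\ref{prop:theoremLS} and this part, call it $\Lambda_n^{\mathrm{tail}}$, satisfies $\P\{\Lambda_n^{\mathrm{tail}}-\log\log n+\log(2\pi^2)\le x\}\to\exp\{-\exp(-x/2)\}$. (ii) On a small neighborhood $|t|\le\delta$ of the boundary point $t=0$, the profile log-likelihood is to leading order the quadratic in the single parameter $\beta=pt$ with maximizer $\bar X$, so this part is $(\sqrt n\,\bar X)_+^2+O_{\P}(1)$, where $\sqrt n\,\bar X=\gamma(\log\log n)^{1/2}+Z_n$ and $Z_n:=n^{-1/2}\sum_i(X_i-\E X_i)\stackrel{\sf d}{\longrightarrow}N(0,1)$. (iii) On $\delta<|t|<c_{1,n}$ the drift is at most $\gamma g(\delta)(\log\log n)^{1/2}$ with $g(\delta)<1$, so this part is $\le\gamma^2 g(\delta)^2\log\log n\,(1+o_{\P}(1))$, strictly dominated by (i) whenever $|\gamma|\le1$. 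Finally, since $\mathrm{Cov}(\sqrt n\,\bar X, S_n^\ast(t))=g(t)\to0$ on regime (i), $Z_n$ and $\Lambda_n^{\mathrm{tail}}$ are asymptotically independent, by the argument behind Lemma~\ref{lem:LS1}.

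Putting the pieces together, for $|\gamma|\le1$ one has $\lambda_n=\max\{(\sqrt n\,\bar X)_+^2,\Lambda_n^{\mathrm{tail}}\}+O_{\P}(1)$, whereas $\lambda_n\ge(\sqrt n\,\bar X)_+^2$ always; recall $c_{n,\alpha}=\log\log n+O(1)$. If $|\gamma|<1$, then $(\sqrt n\,\bar X)_+^2-\log\log n=(\gamma^2-1)\log\log n+O_{\P}(\sqrt{\log\log n})\to-\infty$, so $\lambda_n-\log\log n$ has the same limit as under $H_0$ and the power tends to $\alpha$. If $|\gamma|>1$, then $(\sqrt n\,\bar X)_+^2=\gamma^2\log\log n\,(1+o_{\P}(1))>c_{n,\alpha}$ with probability tending to $1$. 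If $|\gamma|=1$, then $(\sqrt n\,\bar X)_+^2-\log\log n=2Z_n(\log\log n)^{1/2}+O_{\P}(1)$, which diverges to $+\infty$ on $\{Z_n>0\}$ (probability $\to\tfrac12$, forcing rejection) and to $-\infty$ on $\{Z_n<0\}$, where $\lambda_n-\log\log n=\Lambda_n^{\mathrm{tail}}-\log\log n+o_{\P}(1)$ and, by the Gumbel limit together with the asymptotic independence from $Z_n$, on that event rejection occurs with conditional probability tending to $\alpha$ (the algebra is precisely that entering the definition of $c_{n,\alpha}$); a routine $\eta\downarrow0$ argument around $\{|Z_n|<\eta\}$ then gives total power $\tfrac12+\tfrac12\alpha=(1+\alpha)/2$. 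Since $\lambda_n$ is invariant under $X_i\mapsto-X_i$ (the supremum runs over all $t\in\R$), treating $\mu_n\ge0$, i.e.\ $\gamma\ge0$, costs no generality.

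The step I expect to be the main obstacle is the first one: justifying the quadratic approximation and, on regime (i), the full null-type extreme-value behavior of the \emph{centered} process under the alternative. The per-point shift $\mu_n$ is tiny, but the contaminated fraction $q_n n$ can be as large as order $\sqrt{n\log n\,\log\log n}$, so the order-statistic estimates and the Hungarian coupling must be re-examined for data drawn from the mixture rather than from $N(0,1)$; this is conceptually routine but technically the heaviest part. A secondary delicate point is regime (ii): because $g$ attains its maximum only as $t\to0$, one needs the precise behavior of the covariance of $S_n^\ast$ near the diagonal at $t=0$ to show that moving $t$ slightly off $0$ changes $\sup_{|t|\le\delta}S_n^\ast(t)$ by only $O_{\P}((\log\log n)^{-1/2})$, hence the squared statistic by only $O_{\P}(1)$.
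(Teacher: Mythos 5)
This proposition is not proved in the paper: it is stated as a cited result (Theorem~2.1 of Liu and Shao, 2004). So there is no in-paper proof to compare against directly. What the paper \emph{does} contain is the analogous analysis for the split likelihood ratio statistic in Section~\ref{sec:5} (Theorem~\ref{thm:local2}, Lemma~\ref{lem:HS1}, Lemma~\ref{lem:HS3}), and your proposal runs along the same lines as those arguments and as the original Liu--Shao proof: pass to the $\ast$-reparameterization, quadratically approximate $\lambda_n$ by $\sup_t\{S_n^\ast(t)\}_+^2$, write $S_n^\ast(t)=S_n^{\ast 0}(t)+\delta_n(t)+\text{(small)}$ where $S_n^{\ast 0}$ is built from the latent standard normals and $\delta_n(t)\approx\gamma(\log\log n)^{1/2}\,t(e^{t^2}-1)^{-1/2}$, and then split the supremum into a region near $t=0$ where the drift is live and a region $|t|\ge c_{1,n}$ where it is negligible and the null Gumbel fluctuation theory applies. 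Your computation of $\delta_n(t)$ and the identification of $g(t)^2=t^2/(e^{t^2}-1)$ with maximum $1$ attained only in the limit $t\to 0$ are correct, as is the reduction to the sign of $Z_n=S_n^{\ast 0}(0+)$ at the boundary $|\gamma|=1$ (the paper's version of this is the sign of $S_1^{\ast 0}(0+)$, via the Arzel\`a--Ascoli limit in~\eqref{eq:Kolm}).

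One presentational difference worth noting: you shrink a window $|t|\le\delta$ with $\delta\downarrow 0$, whereas the paper works with the fixed interval $I_{1,n}=[0,t_0]$ and then lets $t_0\uparrow\infty$ at the very end (see \eqref{eq:HS4-20}--\eqref{eq:HS4-21}). Both decompositions work; the paper's has the mild advantage that on a fixed bounded interval one can invoke tightness of $S_{n,1}^{\ast 0}$ and pass to a limit process once and for all, rather than having to argue locally. The two technical obstacles you flag are exactly the ones the paper's Section~5 handles for the SLRT, and the route is the right one: (a) the strong approximation and extreme-value estimates transfer to $H_{1,n}^{\#}$ because one can reduce to $S_n^{\ast 0}$ (the statistic on the latent $N(0,1)$ data) plus a deterministic drift plus an $O_{\P}(n^{-1/4+\delta})$ remainder (the paper's \eqref{eq:HS4-12}--\eqref{eq:HS4-13}); (b) the claim that $\sup_{0<t\le\delta}S_n^\ast(t)-S_n^\ast(0+)=O_{\P}\big((\log\log n)^{-1/2}\big)$ is correct and relies on two facts together: the near-diagonal expansion $1-r_{\mathrm{NSE}}(s,t)\sim(s-t)^2/4$ makes the centered process differentiable at $0+$ (so the increment behaves like $t\cdot\xi$ for a Gaussian $\xi$), and the drift penalty $\gamma(\log\log n)^{1/2}\{g(0+)-g(t)\}\sim\tfrac{\gamma}{4}(\log\log n)^{1/2}t^2$ confines the argmax to $t=O\big((\log\log n)^{-1/2}\big)$. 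Balancing $t\xi$ against the $t^2$ penalty gives the $O_{\P}\big((\log\log n)^{-1/2}\big)$ claim, hence $O_{\P}(1)$ for the squared statistic, which is what the boundary case $|\gamma|=1$ needs. So the proposal is sound; make sure, when you carry it out, to use the differentiability at $0+$ rather than only the order of the increments, since a rougher (Brownian-type) process at the origin would give a larger error $O_{\P}\big((\log\log n)^{-1/6}\big)$ and the boundary case would break.
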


We remark that, due to the existence of the inflation factor $(\log\log n)^{1/2}$ in the local alternative $H_{1,n}^{\#}$ (compared to $H_{1,n}$), when $|\gamma|>1$, $\widehat t_{n,1}$, the MLE of the location parameter under the full model, is distributed around $0$, leading to a change in $\lambda_n$ that swamps the corrected critical value $c_{n,\alpha}$.  
A similar phenomenon can also be found in the split likelihood ratio test, which is summarized in Theorem~\ref{thm:local2} below.

\begin{theorem}\label{thm:local2}
Under the sequence of local alternative hypotheses $H_{1,n}^{\#}$ given in \eqref{eq:def_alter2}, the asymptotic local power of the SLRT is given by
\[
\lim_{n\to\infty}\P_{H_{1,n}^{\#}}\{\lambda_n^{\rm split} > -2\log\alpha\}=
\begin{cases}
0,   & \text{if}~|\gamma|<m_1^{-1/2},\\
1/2, & \text{if}~|\gamma|=m_1^{-1/2},\\
1,   & \text{if}~|\gamma|>m_1^{-1/2},
\end{cases}
\]
and if the asymptotic critical point $c_{n,\alpha}^{\rm split}$ defined in \eqref{eq:def_csplit} is adapted, then the SLRT will have very similar asymptotic local power as the LRT with a shifted threshold:
\[
\lim_{n\to\infty}\P_{H_{1,n}^{\#}}\{\lambda_n^{\rm split} > c_{n,\alpha}^{\rm split}\}=
\begin{cases}
\alpha,          & \text{if}~|\gamma|<m_1^{-1/2},\\
(1+\alpha)/2,    & \text{if}~|\gamma|=m_1^{-1/2},\\
1,               & \text{if}~|\gamma|>m_1^{-1/2}.
\end{cases}
\]
\end{theorem}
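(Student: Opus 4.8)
The plan is to locate, under $H_{1,n}^{\#}$, the estimation-sample maximizer $\widehat t_{n,1}$ and then evaluate the quadratically approximated split log-likelihood there, following the skeleton of the proof of Theorem~\ref{thm:null}. The starting point is a reduction: from the viewpoint of the estimation half $D_1$, $H_{1,n}^{\#}$ is, up to an $o(1)$ relabelling, the alternative \eqref{eq:def_alter2} with $\gamma$ replaced by $\gamma_1:=\gamma m_1^{1/2}$, since $q_n\mu_n=\gamma(n^{-1}\log\log n)^{1/2}=\{\gamma m_1^{1/2}+o(1)\}(n_1^{-1}\log\log n_1)^{1/2}$; likewise the inference half $D_0$ sees $\gamma_0:=\gamma m_0^{1/2}$, the two halves are independent, and $\log_{(2)}n_k=\log_{(2)}n+o(1)$. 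Under $H_{1,n}^{\#}$ the process $S_{n,k}$ acquires mean $\E_{H_{1,n}^{\#}}S_{n,k}(t)=n_k^{1/2}q_ne^{-t^2/2}(e^{t\mu_n}-1)$, so the standardized data process $Y_{n,k}(t):=S_{n,k}(t)/\sqrt{\rho(t,t)}$ (the data analogue of $Y_k$ in \eqref{eq:defY_1}) has drift $\gamma_kg(t)\sqrt{\log_{(2)}n_k}\{1+o(1)\}$, where $g(t):=te^{-t^2/2}(1-e^{-t^2})^{-1/2}$ is strictly decreasing on $(0,\infty)$ with $g(0+)=1$. Since $\gamma_1g(t)\sqrt{\log_{(2)}n_1}=o(\epsilon_{2,n_1}^{1/2})$ uniformly over $|t|\in A_{2,n_1}$, this drift is negligible there, whereas over $[0,c_{1,n_1}]$ it peaks, as $t\to0$, at $|\gamma_1|\sqrt{\log_{(2)}n_1}$. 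Decomposing $\lambda_{n,1}$ into its parts over $[0,c_{1,n_1}]$, over $A_{2,n_1}$, and over $[c_{2,n_1},\infty)$, the last being $o_\P(\log_{(2)}n_1)$ just as under the null, one obtains $\lambda_{n,1}([0,c_{1,n_1}])=\{Y_{n,1}(0+)+o_\P(1)\}_{+}^2$ with $Y_{n,1}(0+):=n_1^{-1/2}\sum_{i=1}^{n_1}X_{i,1}$ equal under $H_{1,n}^{\#}$ to $\gamma_1\sqrt{\log_{(2)}n_1}\{1+o(1)\}+Z_1$ for $Z_1\stackrel{\sf d}{\longrightarrow}N(0,1)$, while $\lambda_{n,1}(A_{2,n_1})=M_{n,1}^2+o_\P(1)=\log_{(2)}n_1+O_\P(1)$ by Propositions~\ref{prop:theoremBC}--\ref{prop:theoremLS}. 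Comparing the leading coefficients $\gamma_1^2$ and $1$: $|\widehat t_{n,1}|\in A_{2,n_1}$ with probability tending to $1$ when $\gamma_1^2<1$, i.e.\ $|\gamma|<m_1^{-1/2}$; $\widehat t_{n,1}\stackrel{\sf p}{\longrightarrow}0$ when $\gamma_1^2>1$, i.e.\ $|\gamma|>m_1^{-1/2}$ (for any fixed $t_\ast>0$ the drift deficit $|\gamma_1|\{1-g(t_\ast)\}\sqrt{\log_{(2)}n_1}$ diverges, pushing $\widehat t_{n,1}$ below every fixed level, in fact to $O_\P((\log_{(2)}n_1)^{-1/2})$); and when $\gamma_1^2=1$ the winner is decided at the $\sqrt{\log_{(2)}n_1}$-scale by the sign of $Z_1$, so each of the two events has probability tending to $1/2$.

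\emph{Case $|\gamma|<m_1^{-1/2}$.} Here $|\widehat t_{n,1}|\in A_{2,n_1}\subseteq A_{2,n_0}$ with probability tending to $1$, and I would simply re-run the proof of Theorem~\ref{thm:null}: because the $H_{1,n}^{\#}$-drift of both $S_{n,1}$ and $S_{n,0}$ is $o_\P(\epsilon_{2,n_k}^{1/2})$ throughout $A_{2,n_k}$ and the $O(q_n)$ perturbation of the second moments is negligible there, Proposition~\ref{prop:theoremBC}, Lemma~\ref{lem:LS1}, Lemma~\ref{lem:LS2} and Lemma~\ref{lem:LS3}(\ref{lem:LS3b}) survive verbatim, giving $\{\lambda_n^{\rm split}+\text{\ss}\log\log n\}/\{2\sqrt{\text{\ss}\log\log n}\}\stackrel{\sf d}{\longrightarrow}N(0,1)$ under $H_{1,n}^{\#}$. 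Hence $\lambda_n^{\rm split}\stackrel{\sf p}{\longrightarrow}-\infty$, so $\P_{H_{1,n}^{\#}}\{\lambda_n^{\rm split}>-2\log\alpha\}\to0$, while the definition \eqref{eq:def_csplit} of $c_{n,\alpha}^{\rm split}$ yields $\P_{H_{1,n}^{\#}}\{\lambda_n^{\rm split}>c_{n,\alpha}^{\rm split}\}\to\alpha$.

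\emph{Case $|\gamma|>m_1^{-1/2}$.} Now $\widehat t_{n,1}\stackrel{\sf p}{\longrightarrow}0$, and I would extend the expansion of Lemma~\ref{lem:LS2} to bounded $|t|$, where $V_{n,k}(t)=\rho(t,t)+o_\P(1)$ and $\sqrt{n_1}\widehat\eta_{n,1}(t)=S_{n,1}(t)_{+}/V_{n,1}(t)+o_\P(\cdot)$; plugging $t=\widehat t_{n,1}$ into $2L_{n,0}(\widehat\eta_{n,1}(t),t)=2\sqrt{n_0}\widehat\eta_{n,1}(t)S_{n,0}(t)-n_0\widehat\eta_{n,1}(t)^2V_{n,0}(t)+o_\P(1)$ and using $V_{n,0}(\widehat t_{n,1})=V_{n,1}(\widehat t_{n,1})+o_\P(1)$ turns this into
\[
\lambda_n^{\rm split}=2\sqrt{\text{\ss}}\,Y_{n,1}(\widehat t_{n,1})\,Y_{n,0}(\widehat t_{n,1})-\text{\ss}\,Y_{n,1}(\widehat t_{n,1})^2+o_\P(1).
\]
Since $\widehat t_{n,1}\stackrel{\sf p}{\longrightarrow}0$, the drift computation gives $Y_{n,1}(\widehat t_{n,1})=|\gamma_1|\sqrt{\log_{(2)}n}\{1+o_\P(1)\}$; and, $\widehat t_{n,1}$ being $D_1$-measurable, $Y_{n,0}(\widehat t_{n,1})$ is conditionally on $D_1$ asymptotically $N(\gamma_0g(\widehat t_{n,1})\sqrt{\log_{(2)}n_0},1)$, hence equals $|\gamma_0|\sqrt{\log_{(2)}n}\{1+o_\P(1)\}$. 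Therefore $\lambda_n^{\rm split}=\big(2\sqrt{\text{\ss}}\,|\gamma_1\gamma_0|-\text{\ss}\,\gamma_1^2\big)\log_{(2)}n\{1+o_\P(1)\}=\gamma^2m_0\log_{(2)}n\{1+o_\P(1)\}\stackrel{\sf p}{\longrightarrow}+\infty$, whereas $c_{n,\alpha}^{\rm split}=-\text{\ss}\log_{(2)}n\{1+o(1)\}\to-\infty$; so both $\P_{H_{1,n}^{\#}}\{\lambda_n^{\rm split}>-2\log\alpha\}$ and $\P_{H_{1,n}^{\#}}\{\lambda_n^{\rm split}>c_{n,\alpha}^{\rm split}\}$ tend to $1$.

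\emph{Case $|\gamma|=m_1^{-1/2}$, and the main obstacle.} On the event that $\widehat t_{n,1}$ is near $0$ (asymptotically $\{Z_1>0\}$), of probability tending to $1/2$, the computation of the previous case with $\gamma^2=m_1^{-1}$ gives $\lambda_n^{\rm split}=\text{\ss}\log_{(2)}n\{1+o_\P(1)\}\to+\infty$, so conditionally on this event both rejection probabilities tend to $1$; on the complementary event $\{|\widehat t_{n,1}|\in A_{2,n_1}\}$, of probability tending to $1/2$, the subcritical argument applies conditionally — one uses that this event is $D_1$-measurable and (since $r_{\rm NSE}(0+,t)\to0$ for $|t|\in A_{2,n_1}$) asymptotically independent of the $A_{2,n_1}$-bulk of $S_{n,1}$, and that $\lambda_n^{\rm split}$ is then a function of $S_{n,0}(\widehat t_{n,1})$, which given $D_1$ (with $\widehat t_{n,1}\in A_{2,n_0}$) is asymptotically $N(0,1)$ regardless — so conditionally the rejection probabilities tend to $\alpha$ (for $c_{n,\alpha}^{\rm split}$) and to $0$ (for $-2\log\alpha$). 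Combining the two events gives $\P_{H_{1,n}^{\#}}\{\lambda_n^{\rm split}>-2\log\alpha\}\to\tfrac12$ and $\P_{H_{1,n}^{\#}}\{\lambda_n^{\rm split}>c_{n,\alpha}^{\rm split}\}\to\tfrac12+\tfrac12\alpha=\tfrac{1+\alpha}{2}$. The hardest points are precisely (i) upgrading the quadratic approximation of Lemma~\ref{lem:LS2} to a neighbourhood of $t=0$, where the covariance $r_{\rm NSE}$ degenerates ($r_{\rm NSE}(s,t)\to1$ and $\rho(t,t)\to0$) so that the $\eta$-scaling changes and the $H_{1,n}^{\#}$-drift must be tracked carefully; and (ii) pinning down, in the boundary regime, that the location of $\widehat t_{n,1}$ is governed at the $\sqrt{\log_{(2)}n_1}$-scale by the single standard normal $Z_1$ against an $O_\P(1)$-scale extreme-value competitor over $A_{2,n_1}$, which requires transferring the fine asymptotics of \citet{MR2146091} to the subsample $D_1$.
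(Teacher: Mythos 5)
Your proposal follows essentially the same route as the paper's proof: re-parameterize to a standardized process (your $Y_{n,k}(t)=S_{n,k}(t)/\sqrt{\rho(t,t)}$ is exactly the paper's $S^*_{n,k}(t)$, built from $W^*_{i,k}$), track the $H_{1,n}^{\#}$-drift $\gamma_k g(t)\sqrt{\log_{(2)}n_k}$, locate $\widehat t_{n,1}$ by comparing the quadratic approximations of $\lambda_{n,1}$ over $[0,t_0]$ versus $A_{2,n_1}$ (this is Lemma~\ref{lem:HS3}), and then evaluate the split statistic at $\widehat t_{n,1}$ via a uniform quadratic expansion (Lemma~\ref{lem:HS1}). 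Your reduction to $\gamma_1=\gamma m_1^{1/2}$ for $D_1$ and $\gamma_0=\gamma m_0^{1/2}$ for $D_0$ is a clean reformulation of the same bookkeeping the paper does through equations \eqref{eq:HS4-12} and \eqref{eq:HS4-18}. The one place where you are mildly more ambitious than the paper is the supercritical case: you claim $\widehat t_{n,1}\stackrel{\sf p}{\longrightarrow}0$ (indeed $O_\P((\log_{(2)}n_1)^{-1/2})$) so that $\lambda_n^{\rm split}=\gamma^2 m_0\log_{(2)}n\{1+o_\P(1)\}$, whereas the paper only establishes $|\widehat t_{n,1}^*|\in I_{1,n_1}$ and then uses the algebraic identity $S^*_{n,0}(\widehat t_{n,1}^*)/\{\sqrt{\text{\ss}}\,S^*_{n,1}(\widehat t_{n,1}^*)\}=1+o_\P(\cdot)$ (both drifts carry the same unknown factor $g(\widehat t_{n,1}^*)$, which cancels) to get $\lambda_n^{\rm split}-\text{\ss}\log\log n\stackrel{\sf p}{\longrightarrow}+\infty$ without pinning down $\widehat t_{n,1}^*$ further; that ratio argument is slightly cheaper and avoids proving the extra localization you invoke. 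The technical obstacles you flag at the end — the degenerate $\eta$-scaling near $t=0$ and the conditional independence in the boundary regime — are exactly the content of the paper's Lemma~\ref{lem:HS1} (uniform expansion over $I_{1,n}\cup A_{2,n}$ under the $*$-parameterization) and the Arzelà--Ascoli construction of $S^{*0}_1(0+)$ in Lemma~\ref{lem:HS3}(iii). In short: same proof, with your version making a sharper but dispensable claim in the supercritical case.
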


\begin{proof}[Proof of Theorem~\ref{thm:local2}]

In this section, we will use a different re-parameterization for $\lambda_n^{\rm split}$ from Sections~3 and 4.
For $t\ne 0$, let $W_{i,k}^*(t):=(e^{t^2}-1)^{-1/2}\{Z_{i,k}^*(t)-1\},$ where~$Z_{i,k}^{*}(t):=\exp(tX_{i,k}-t^2/2)$, for $1\le i\le n_k$ and $k=0,1$. 
For $t=0$, we further define $W_{i,k}^*(0):=W_{i,k}^*(0+)=X_{i,k}$, $1\le i\le n_k$, $k=0,1$; notice that $W_{i,k}^*(0-)=-X_{i,k}$ and thus function $W_{i,k}^*$ has a jump at the origin. We also define $S_{n,k}^*(t)=n_k^{-1/2}\sum_{i=1}^{n_k}W_{i,k}^*(t)$ and~$V_{n,k}^*(t)=n_k^{-1}  \sum_{i=1}^{n_k}W_{i,k}^*(t)^2$.

\vspace{.5mm}

Data drawn from the mixture specified by \eqref{eq:def_alter2} can be represented as 
\[\yestag\label{eq:new-rep}
X_{i,k} = (1-J_{i,k})X_{i,k}^0+J_{i,k}(\mu_n+X_{i,k}^0),
\]
where $X_{1,k}^0,...,X_{n_k,k}^0$ are standard normal random variables, 
$J_{1,k},...,J_{n_k,k}$ are random indicators with $\Pr(J_{i,k} = 1) = q_n$ and $\Pr(J_{i,k} = 0) = 1-q_n$ for each~$1\le i\le n_k$, $k=0,1$, 
and the random variables
$X_{i,k}^0$'s and~$J_{i,k}$'s, $i=1,\dots,n_k,k=0,1$ 
are totally independent.  Defining $\Delta_{i,k}:=\mu_n J_{i,k}$ we see that $X_{i,k}=X_{i,k}^0+\Delta_{i,k}$ and $Z_{i,k}^*(t)=Z_{i,k}^{*0}(t)\exp(t\Delta_{i,k}).$ 
Write $W_{i,k}^{*0}$, $S_{i,k}^{*0}$, and $V_{i,k}^{*0}$ for the versions of $W_{i,k}^*$, $S_{i,k}^*$, and $V_{i,k}^*$, respectively, when $X_{i,k}$ is replaced by $X_{i,k}^0$. 

\vspace{.5mm}

We have, uniformly over $|t|\in I_{1,n}\cup A_{2,n}$, by Lemma~\ref{lem:HS1},\vspace{-1mm}
\begin{align*}\yestag
  2L_{n,0}^*(\widehat\eta^*_{n,1}(t), t)
&=2\sum_{i=1}^{n_0}\log\Big\{1+\widehat\eta^*_{n,1}(t) W_{i,0}^*(t)\Big\}\\[-2mm]
(\text{by Lemma~\ref{lem:HS1}(\ref{lem:HS1c})})~~~
&=2\widehat\eta^*_{n,1}(t)\sum_{i=1}^{n_0}W_{i,0}^*(t)
  -\widehat\eta^*_{n,1}(t)^2\big\{1+o_{\P}(\epsilon_{2,n})\big\}\sum_{i=1}^{n_0}W_{i,0}^*(t)^2\\
&=2\sqrt{n_0}\:\!\widehat\eta^*_{n,1}(t)S_{n,0}^*(t)
  -n_0\:\!\widehat\eta^*_{n,1}(t)^2\big\{1+o_{\P}(\epsilon_{2,n})\big\}V_{n,0}^*(t)\\
(\text{by Lemma~\ref{lem:HS1}(\ref{lem:HS1a})})~~~
&=2\sqrt{n_0}\:\!\widehat\eta^*_{n,1}(t)S_{n,0}^*(t)
  -n_0\:\!\widehat\eta^*_{n,1}(t)^2\big\{1+o_{\P}(\epsilon_{2,n})\big\},
\end{align*}
and moreover, by Lemma~\ref{lem:HS1}(\ref{lem:HS1b}),
\begin{align*}\yestag\label{eq:haishi1}
  2L_{n,0}^*(\widehat\eta^*_{n,1}(t), t)
&=\big\{1+o_{\P}(\epsilon_{2,n})\big\}\sqrt{\text{\ss}}S_{n,1}^*(t)_{+}\Big[2S_{n,0}^*(t)
  -\big\{1+o_{\P}(\epsilon_{2,n})\big\}\sqrt{\text{\ss}}S_{n,1}^*(t)_{+}\Big].
\end{align*}
Finally, $|\widehat t_{n,1}^*|\in I_{1,n}\cup A_{2,n}$ with probability tending to 1 by Lemma~\ref{lem:HS3}, and thus
\begin{align*}\yestag\label{eq:haishi2}
\lambda_n^{\rm split}
&=2L_{n,0}^*(\widehat\eta^*_{n,1}(\widehat t_{n,1}^*), \widehat t_{n,1}^*)\\
&=\big\{1+o_{\P}(\epsilon_{2,n})\big\}\sqrt{\text{\ss}}S_{n,1}^*(\widehat t_{n,1}^*)_{+}\Big[2S_{n,0}^*(\widehat t_{n,1}^*)
  -\big\{1+o_{\P}(\epsilon_{2,n})\big\}\sqrt{\text{\ss}}S_{n,1}^*(\widehat t_{n,1}^*)_{+}\Big].
\end{align*}

\begin{enumerate}
\item[$\bullet$] {\bf Case $|\gamma|>m_1^{-1/2}$}:
We have, by Lemma~\ref{lem:HS3}(\ref{lem:HS3a}), with probability tending to 1, $|\widehat t_{n,1}^*|\in I_{1,n_1}$,
and that
\[\yestag\label{eq:henda}
S_{n,1}^*(\widehat t_{n,1}^*)_+-(\log_{(2)}n)^{1/2}
=S_{n,1}^*(\widehat t_{n,1}^*)-(\log_{(2)}n)^{1/2}
\stackrel{\sf p}{\longrightarrow}+\infty.
\]
To show that $\lambda_n^{\rm split}$ is far away from $-2\log\alpha$, in view of \eqref{eq:haishi2}, it suffices to show  ${S_{n,0}^*(\widehat t_{n,1}^*)}\sim{\sqrt{\text{\ss}}S_{n,1}^*(\widehat t_{n,1}^*)}$.  
Uniformly for $t\in I_{1,n_1}$, by Equation~\eqref{eq:HS4-18},
\[
\sqrt{\text{\ss}}\Big\{S_{n,1}^*(t)-S_{n,1}^{*0}(t)-o_{\P}(\epsilon_{2,n})\Big\}
=\Big\{S_{n,0}^*(t)-S_{n,0}^{*0}(t)-o_{\P}(\epsilon_{2,n})\Big\}.
\]
When $t=\widehat t_{n,1}^*$, the terms $S_{n,1}^*(\widehat t_{n,1}^*)$ and $S_{n,0}^*(\widehat t_{n,1}^*)$ dominate --- it holds that
\[
S_{n,1}^{*0}(\widehat t_{n,1}^*)=O_{\P}(1)
~~~\text{and}~~~
S_{n,0}^{*0}(\widehat t_{n,1}^*)=O_{\P}(1).
\]
Thus, we have
\[\yestag\label{eq:chabuduo}
\frac{S_{n,0}^*(\widehat t_{n,1}^*)}{\sqrt{\text{\ss}}S_{n,1}^*(\widehat t_{n,1}^*)}
=1+o_{\P}(\epsilon_{2,n}).
\]
Plugging \eqref{eq:henda} and \eqref{eq:chabuduo} into \eqref{eq:haishi2} yields
\[
\lambda_n^{\rm split}-\text{\ss}\log\log n 
\stackrel{\sf p}{\longrightarrow} +\infty.
\]

\item[$\bullet$] {\bf Case $|\gamma|<m_1^{-1/2}$}:
We have, by Lemma~\ref{lem:HS3}(\ref{lem:HS3b}), with probability tending to 1, $|\widehat t_{n,1}^*|\in A_{2,n_1}$ and 
\begin{equation}\label{eq:haishibuxing1}
S_{n,1}^*(\widehat t_{n,1}^*)_{+}
=S_{n,1}^*(\widehat t_{n,1}^*)
=\sqrt{\log_{(2)}n_1}+O_{\P}(\epsilon_{2,n_1}^{1/2})
\end{equation}
and
\begin{equation}\label{eq:haishibuxing2}
S_{n,0}^*(\widehat t_{n,1}^*)\stackrel{\sf d}{\longrightarrow} N(0,1).
\end{equation}
Plugging \eqref{eq:haishibuxing1} and \eqref{eq:haishibuxing2} into \eqref{eq:haishi2},
a revised version of Proof of Theorem~\ref{thm:null} yields
\[
\frac{\lambda_n^{\rm split} + \text{\ss}\log \log n}{2\sqrt{\text{\ss}\log \log n}}
\stackrel{\sf d}{\longrightarrow}
N(0,1).
\]

\item[$\bullet$] {\bf Case $|\gamma|=m_1^{-1/2}$}: By Arzelà--Ascoli theorem in probability space \citep{MR1113551}, there exists a process $S_1^{*0}$ to be a version of the limiting process of $S_{n,1}^{*0}$ on $I_{1,n_1}$ such that 
\begin{equation}\label{eq:Kolm}
\sup_{t\in I_{1,n_1}}|S_{n,1}^{*0}(t)-S_1^{*0}(t)|\stackrel{\sf p}{\longrightarrow} 0.
\end{equation}

In view of Lemma~\ref{lem:HS3}(\ref{lem:HS3c}), the distribution of $\sup_{|t|\in I_{1,n_1}}
2L_{n,1}^*(\widehat\eta^*_{n,1}(t), t) - \log_{(2)} n_1$, and accordingly, of the distribution of $\widehat t_{n,1}^*$, depend on whether $|\gamma|\sgn(0+)S_1^{*0}(0+)>0$ or $<0$.

\begin{enumerate}
\item[--] \textit{\textbf{Subcase}} with $|\gamma|\sgn(0+)S_1^{*0}(0+)>0$: This event happens with probability of~$1/2$.  
When this event happens, by Lemma~\ref{lem:HS3}(\ref{lem:HS3c}), with probability tending to 1, $|\widehat t_{n,1}^*|\in I_{1,n_1}$ and
\[
S_{n,1}^*(\widehat t_{n,1}^*)-(1-\delta)\sqrt{\log_{(2)}n_1} \stackrel{\sf p}{\longrightarrow} +\infty
\]
for any constant $\delta>0$.
A revised argument of case $|\gamma|>m_1^{-1/2}$ yields
\[
\lambda_n^{\rm split}-\text{\ss}(1-\delta)\log\log n 
\stackrel{\sf p}{\longrightarrow} +\infty
\]
for any constant $\delta>0$.
\item[--] \textit{\textbf{Subcase}} with $|\gamma|\sgn(0+)S_1^{*0}(0+)<0$: This event also happens with probability of $1/2$. 
When this event happens, with probability tending to 1, $|\widehat t_{n,1}^*|\in A_{2,n_1}$ (Lemma~\ref{lem:HS3}(\ref{lem:HS3c})). 
A similar argument of case $|\gamma|<m_1^{-1/2}$ yields
\[
\frac{\lambda_n^{\rm split} + \text{\ss}\log \log n}{2\sqrt{\text{\ss}\log \log n}}
\stackrel{\sf d}{\longrightarrow}
N(0,1).
\]
\end{enumerate}
\end{enumerate}
Combining these three cases, Theorem~\ref{thm:local2} follows.
\end{proof}

\begin{remark}
Under the local alternative $H_{1,n}^{\#}$ with $|\gamma|>m_1^{-1/2}$, the independence between $\widehat t_{n,1}^*$ (the MLE of the location parameter) and $D_0$ (the part of the data for inference) and thus the process $S_{n,0}^*$ does not affect that $S_{n,0}^*(\widehat t_{n,1}^*)$ tends to infinity at the same rate as $S_{n,1}^*(\widehat t_{n,1}^*)$, which is different from what happens under the null hypothesis (compare Lemma~\ref{lem:LS1}) --- the source of the infinity under the alternative is constant terms (e.g. the second term on the right-hand side of \eqref{eq:HS4-18}) that come from $q_n\mu_n$ while the source of the infinity under the null is the maximum over an infinite interval.
\end{remark}

\begin{remark}
At first glance at Theorem~\ref{thm:local2}, we might expect that choosing $m_1$ arbitrarily close to 1 would yield the best power for the SLRT. However, simulations in Section~\ref{sec:6} reveal that as long as $\gamma > m_1^{-1/2}$, increasing $m_0$ leads to a higher empirical power in the finite-sample performance. This outcome should not be surprising, as under the local alternative
$
\lambda_n^{\rm split}\sim m_0\gamma^2\log\log n
$
and thus
\[
\frac{\lambda_n^{\rm split} + \frac{m_0}{m_1}\log \log n}{2\sqrt{\frac{m_0}{m_1}\log \log n}}
\sim 
\frac{m_0\gamma^2+\frac{m_0}{m_1}}{2\sqrt{\frac{m_0}{m_1}}}\sqrt{\log \log n},
\]
which is increasing as $m_0$ increases;
recall the asymptotic null distribution of $\lambda_n^{\rm split}$ is
\[
\frac{\lambda_n^{\rm split} + \frac{m_0}{m_1}\log \log n}{2\sqrt{\frac{m_0}{m_1}\log \log n}}
\stackrel{\sf d}{\longrightarrow}
N(0,1).
\]
\end{remark}

The following lemma is comparable with Lemma~\ref{lem:LS2}.  It explains why the split likelihood ratio statistic under the local alternative \eqref{eq:def_alter2} can also be quadratically approximated.

\begin{lemma}\label{lem:HS1}
\begin{enumerate}[(i)]
\item\label{lem:HS1a} It holds uniformly over $|t|\in R_{n,k}$ that
\begin{equation}\label{eq:haiyao}
S_{n,k}^*(t) 
=O_{\P}(\sqrt{n_k} d_{n,k}),~~~
V_{n,k}^*(t) 
=1+o_{\P}(\epsilon_{2,n}),~~~
d_{n,k}\max_{1\le i\le n_k}|W_{i,k}^*(t)|=o_{\P}(\epsilon_{2,n}),
\end{equation}
for
\begin{enumerate}
\item $R_{n,k}=I_{1,n_k}$, $d_{n,k}=n_k^{-1/4}$;
\item $R_{n,k}=A_{2,n_k}$, $d_{n,k}=(n_k^{-1}\log\log n_k)^{1/2}$.
\end{enumerate}
\item\label{lem:HS1b} It holds uniformly over $|t|\in I_{1,n_1}\cup A_{2,n_1}$ that
\[
\sqrt{n_1}\widehat\eta^*_{n,1}(t)=\big\{1+o_{\P}(\epsilon_{2,n})\big\}S_{n,1}^*(t)_{+}.
\]
\item\label{lem:HS1c} It holds  
that $\sup_{|t|\in I_{1,n_1}\cup A_{2,n_1}}\max_{1\le i\le n_k} |\widehat\eta_{n,1}^*(t)W_{i,k}^*(t)| = o_{\P}(\epsilon_{2,n})$.
\end{enumerate}
\end{lemma}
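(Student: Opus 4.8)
The plan is to follow the proof of Lemma~\ref{lem:LS2}, replacing each non‑starred quantity by its starred counterpart, enlarging the range of $t$ from $A_{2,n}$ to $I_{1,n}\cup A_{2,n}$, and book‑keeping the drift created by the local alternative $H_{1,n}^{\#}$. The organising device is the decomposition obtained from \eqref{eq:new-rep},
\[
W_{i,k}^*(t)=W_{i,k}^{*0}(t)+r_{i,k}(t),\qquad r_{i,k}(t):=(e^{t^2}-1)^{-1/2}Z_{i,k}^{*0}(t)\,(e^{t\mu_n}-1)\,J_{i,k},
\]
so that $S_{n,k}^*(t)=S_{n,k}^{*0}(t)+n_k^{-1/2}\sum_i r_{i,k}(t)$ and $V_{n,k}^*(t)=V_{n,k}^{*0}(t)+n_k^{-1}\sum_i\{2W_{i,k}^{*0}(t)r_{i,k}(t)+r_{i,k}(t)^2\}$, where the $*0$ quantities are the null‑law versions built from the $X_{i,k}^0$. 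Since $q_n\mu_n=\gamma(n^{-1}\log\log n)^{1/2}$ and $\mu_n=O\{(\log n)^{-1/2}\}$, one has $|e^{t\mu_n}-1|=O(t\mu_n)$ uniformly on $I_{1,n}\cup A_{2,n}$, and together with $t(e^{t^2}-1)^{-1/2}=O(1)$ this yields the moment bounds $\sqrt{n_k}\,|\E[W_{i,k}^*(t)]|=\sqrt{n_k}\,(e^{t^2}-1)^{-1/2}q_n|e^{t\mu_n}-1|=O\{(\log\log n)^{1/2}\}$ and $\E[r_{i,k}(t)^2]+|\E[W_{i,k}^{*0}(t)r_{i,k}(t)]|=O\{n^{-1/2}(\log n\log\log n)^{1/2}\}=o(\epsilon_{2,n})$, uniformly on both windows. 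Hence the $r$‑terms add only a deterministic drift of size $O\{(\log\log n)^{1/2}\}$ plus an $o_{\P}(1)$ fluctuation to $S_{n,k}^*$, and a net $o_{\P}(\epsilon_{2,n})$ to $V_{n,k}^*$ (the uniformity over the growing window $A_{2,n}$ coming from the chaining/maximal‑inequality arguments of \citet{MR2058122} and \citet{10015228522}), so that everything reduces to the null‑law quantities $S_{n,k}^{*0}$, $V_{n,k}^{*0}$, $W_{i,k}^{*0}$.

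For the three estimates in \eqref{eq:haiyao} on the window $I_{1,n_k}$: the classes $\{t\mapsto W_{i,k}^{*0}(t)\}$ and $\{t\mapsto W_{i,k}^{*0}(t)^2\}$, $t\in[0,t_0]$, are smooth parametric (hence Donsker) with square‑integrable envelope $C(1+|X_{i,k}^0|)e^{t_0|X_{i,k}^0|}$, so their centred empirical processes are $O_{\P}(n_k^{-1/2})$ uniformly; with the mean bound above this gives $S_{n,k}^*(t)=O_{\P}\{(\log\log n)^{1/2}\}=O_{\P}(n_k^{1/4})=O_{\P}(\sqrt{n_k}d_{n,k})$ and $V_{n,k}^*(t)=1+o_{\P}(\epsilon_{2,n})$, while $\max_{i\le n_k}\sup_{t\le t_0}|W_{i,k}^*(t)|\le C(1+\max_i|X_{i,k}|)e^{t_0\max_i|X_{i,k}|}=e^{O_{\P}((\log n_k)^{1/2})}=o_{\P}(n_k^{1/4}\epsilon_{2,n})$ because $t_0(\log n_k)^{1/2}=o(\log n_k)$. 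On the window $A_{2,n_k}$ (where $\sqrt{n_k}d_{n,k}=(\log\log n_k)^{1/2}$): $\sup_{|t|\in A_{2,n_k}}|S_{n,k}^{*0}(t)|=O_{\P}\{(\log\log n)^{1/2}\}$ follows from Proposition~\ref{prop:theoremBC} (using $S_{n,k}^{*0}(t)=(1-e^{-t^2})^{-1/2}S_{n,k}^{0}(t)$ with $(1-e^{-t^2})^{-1/2}\to1$ on $A_{2,n}$); $V_{n,k}^{*0}(t)=1+o_{\P}(\epsilon_{2,n})$ is obtained exactly as Lemma~\ref{lem:LS2}(\ref{lem:LS2b}), via the $\Delta_\Phi+\Delta_{\mathrm{Csrg}}$ split through the Hungarian embedding, the weighted empirical‑process bounds of \citet{MR815960}, and the order‑statistic estimates of \citet[Thm.~1.5.3, Ex.~1.7.9]{MR691492}; and $\max_{i\le n_k}\sup_{|t|\in A_{2,n_k}}|W_{i,k}^*(t)|=O_{\P}\{n_k^{1/2}(\log n_k)^{-3}\}$ by the computation behind \eqref{eq:hhh}, which is $o_{\P}\{n_k^{1/2}(\log\log n_k)^{-3/2}\}=o_{\P}(\sqrt{n_k}d_{n,k}\epsilon_{2,n})$.

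For assertion~(\ref{lem:HS1b}): a crude uniform bound $\sup_{|t|\in I_{1,n_1}\cup A_{2,n_1}}\widehat\eta_{n,1}^*(t)=O_{\P}\{n_1^{-1/2}(\log n_1)^{C}\}$ — the starred analogue of \citet[Lem.~2]{MR2058122}, cf.\ \eqref{eq:renzhen} — together with the last estimate in \eqref{eq:haiyao} makes $\widehat\eta_{n,1}^*(t)\max_{i\le n_1}|W_{i,1}^*(t)|=o_{\P}(\epsilon_{2,n})$, which legitimises the term‑by‑term expansion $\log(1+x)=x-x^2/2+O(x^3)$ and gives, as in \eqref{eq:fenjie2}, $2L_{n,1}^*(\eta,t)=2\sqrt{n_1}\eta\,S_{n,1}^*(t)-\{1+o_{\P}(\epsilon_{2,n})\}n_1\eta^2 V_{n,1}^*(t)$ uniformly over $|t|\in I_{1,n_1}\cup A_{2,n_1}$; maximising this concave quadratic over $\eta\in[0,(e^{t^2}-1)^{1/2}]$ and using $V_{n,1}^*(t)=1+o_{\P}(\epsilon_{2,n})$ yields $\sqrt{n_1}\widehat\eta_{n,1}^*(t)=\{1+o_{\P}(\epsilon_{2,n})\}S_{n,1}^*(t)_{+}$ wherever the box constraint is inactive, i.e.\ wherever $S_{n,1}^*(t)_{+}/\sqrt{n_1}\le(e^{t^2}-1)^{1/2}$ — which holds throughout $A_{2,n_1}$ by the exponential gap $(e^{t^2}-1)^{1/2}\asymp e^{t^2/2}$, and near $t=0$ on the range $t\gg(n_1^{-1}\log\log n)^{1/2}$ where $S_{n,1}^*$ is essentially constant of size $O_{\P}\{(\log\log n)^{1/2}\}$. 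Assertion~(\ref{lem:HS1c}) is then immediate: $\widehat\eta_{n,1}^*(t)=O_{\P}(d_{n,1})$ by~(\ref{lem:HS1b}), so $\widehat\eta_{n,1}^*(t)\max_{i\le n_k}|W_{i,k}^*(t)|=O_{\P}(d_{n,1})\,\max_{i\le n_k}|W_{i,k}^*(t)|=o_{\P}(\epsilon_{2,n})$ by the last estimate in \eqref{eq:haiyao} applied with index $k$ (the $n_0$‑versus‑$n_1$ mismatch between the windows being absorbed into a slightly dilated interval, just as in the proofs of Lemmas~\ref{lem:LS2}--\ref{lem:LS3}).

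The part I expect to be delicate is the behaviour near $t=0$, where the starred reparameterisation degenerates: the factor $(e^{t^2}-1)^{-1/2}$ blows up, the box $[0,(e^{t^2}-1)^{1/2}]$ collapses to $\{0\}$, and — in contrast to $A_{2,n}$ — the drift of $S_{n,1}^*$ does not vanish, so the box constraint in the definition of $\widehat\eta_{n,1}^*(t)$ cannot be dismissed by a crude exponential‑gap argument and must instead be matched against the stationarity of the outer maximisation over $t$; this is the crux of assertion~(\ref{lem:HS1b}) and the one place where the argument genuinely departs from the $A_{2,n}$ analysis of \citet{MR2058122}. The remaining difficulty, routine but unavoidable, is propagating all the $O_{\P}$ and $o_{\P}$ statements \emph{uniformly} over the window $A_{2,n}$, whose length is of order $(\log n)^{1/2}$; both points are settled by importing the uniform estimates of \citet{MR2058122} and \citet{10015228522} and coupling them with the $O\{n^{-1/2}(\log n\log\log n)^{1/2}\}$ bounds on the drift terms recorded in the first paragraph.
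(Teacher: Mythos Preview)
Your overall strategy matches the paper's: decompose $S_{n,k}^*$ and $V_{n,k}^*$ into their null-law versions plus a drift from the contamination, bound the drift uniformly, and reduce everything to the null-law estimates already established in Lemma~\ref{lem:LS2} and \citet{MR2058122,10015228522}. The paper imports the drift bounds \eqref{eq:HS4-12}--\eqref{eq:HS4-13} from \citet{MR2146091} rather than deriving them via your $r_{i,k}$ decomposition, and for part~(\ref{lem:HS1b}) it constructs the candidate $\widetilde\eta_{n,1}^*(t)$ as a root of the score $\partial_\eta L_{n,1}^*$ rather than maximising the quadratic expansion directly; these are stylistic differences, not substantive ones.

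You are right to flag the feasibility issue near $t=0$: for $|t|\lesssim(n_1^{-1}\log\log n)^{1/2}$ the upper box $(e^{t^2}-1)^{1/2}\sim|t|$ collapses faster than the unconstrained maximiser $\sim n_1^{-1/2}(\log\log n)^{1/2}$, so the constraint binds and the identity $\sqrt{n_1}\widehat\eta_{n,1}^*(t)=\{1+o_{\P}(\epsilon_{2,n})\}S_{n,1}^*(t)_+$ fails there. The paper's proof glosses over exactly this point (it passes from $\widetilde\eta_{n,1}^*(t)$ to $\widehat\eta_{n,1}^*(t)$ without checking the upper constraint). However, your proposed fix --- ``matching against the stationarity of the outer maximisation over $t$'' --- is really an argument about where $\widehat t_{n,1}^*$ lives (Lemma~\ref{lem:HS3}), not about the uniform statement~(\ref{lem:HS1b}) itself, and invoking it here is circular since Lemma~\ref{lem:HS3} is proved \emph{via} Lemma~\ref{lem:HS1}. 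The clean repair is to note that the only downstream use of~(\ref{lem:HS1b}) on $I_{1,n_1}$ is through the profile identity $2L_{n,1}^*(\widehat\eta_{n,1}^*(t),t)=\{1+o_{\P}(\epsilon_{2,n})\}\{S_{n,1}^*(t)_+\}^2$ and its supremum: since the constrained profile never exceeds the unconstrained one, and since $\{S_{n,1}^*(t)_+\}^2$ is essentially constant over $I_{1,n_1}$ (by \eqref{eq:HS4-18} and \eqref{eq:tbounded}), the supremum is attained outside the thin bad set and \eqref{eq:HS4-20} goes through unchanged.
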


{{{
    \setlength{\abovedisplayskip}{9pt}
    \setlength{\belowdisplayskip}{9pt}
    \setlength{\abovedisplayshortskip}{9pt}
    \setlength{\belowdisplayshortskip}{9pt}
\begin{proof}[Proof of Lemma~\ref{lem:HS1}]
\begin{enumerate}[(i)]
\item 
This is a refined statement of (3.15) in \citet{MR2146091}.  First, plugging \eqref{eq:new-rep} into $S_{n,k}^*(t)$ and $V_{n,k}^*(t)$, \citet{MR2146091} proves that, uniformly for $|t|\le C(\log n)^{1/2}$, it holds that
\begin{align}
  S_{n,k}^*(t)
&=S_{n,k}^{*0}(t) 
  +n_k^{1/2}q_n(e^{t^2}-1)^{-1/2}(e^{\mu_n t}-1)+O_{\P}(n^{-1/4+\delta})~~~\text{and}\label{eq:HS4-12}\\
  V_{n,k}^*(t)
&=V_{n,k}^{*0}(t) 
  +O_{\P}(n^{-1/4+\delta})~~~\text{for}~k=0,1.\label{eq:HS4-13}
\end{align}
\begin{enumerate}
\item We have, uniformly in $|t|\in I_{1,n_1}$, by \eqref{eq:HS4-12} and \eqref{eq:HS4-13} respectively,
\begin{align}
  S_{n,k}^*(t) 
&=S_{n,k}^{*0}(t) 
 +m_k^{1/2}\gamma(\log_{(2)}n)^{1/2} t(e^{t^2}-1)^{-1/2}\Big(1+\frac12\mu_n t\Big)+o_{\P}(\epsilon_{2,n})~~~\text{and}\label{eq:HS4-18}\\
  V_{n,k}^*(t)
&=V_{n,k}^{*0}(t)+O_{\P}(n^{-1/4+\delta})=1+O_{\P}(n^{-1/2})+O_{\P}(n^{-1/4+\delta})~~~\text{for}~k=0,1,
\end{align}
which proves the first two parts of \eqref{eq:haiyao}. For the third part, we have $$\sup_{t\in I_{1,n_k}}\max_{1\le i\le n_k}|W_{i,k}^{*}(t)|=O_{\P}(n^{\delta})$$ for every $\delta>0$, and the result follows.
\item 
We first notice that the proof of \eqref{eq:theoremBC} can be transplanted to prove that, for $x\in\R$,
\[\yestag\label{eq:theoremBCsame}
\lim_{n\to\infty}\P_{H_{1,n}^{\#}}\Big\{\sqrt{\log_{(2)}n}\Big(\sup_{|t|\in A_{2,n_1}} S_{n,1}^{*0}(t)-\sqrt{\log_{(2)}n}\,\Big)+\log(\sqrt{2}\pi)\leq x\Big\}=e^{-e^{-x}}.
\]
We also have, uniformly in $|t|\in A_{2,n_1}$, by \eqref{eq:HS4-12},
\begin{align}\label{eq:HS4-18add}
  S_{n,k}^*(t) 
&=S_{n,k}^{*0}(t) 
 +o_{\P}(\epsilon_{2,n})
\end{align}
Combining \eqref{eq:HS4-18add} and~\eqref{eq:theoremBCsame} proves the first part of \eqref{eq:haiyao}.  
Combining~\eqref{eq:HS4-13} and Lemma~\ref{lem:LS2}\eqref{lem:LS2b} (notices also that $(e^{t^2}-1)^{-1/2}$ differs negligibly from $e^{-t^2/2}$ uniformly for $|t|\in A_{2,n_1}$) proves the second part.
The last part of~\eqref{eq:haiyao} can be proved by \eqref{eq:def_alter2} and that with probability tending to 1,
\begin{gather}\label{eq:hhh2}
\sup_{t\in A_{2,n_k}}\max_{1\le i\le n_k}|W_{i,k}^{*0}(t)|
\le   n_k^{1/2}(\log n_k)^{-3}, \\
\label{eq:rightnow}
|W_{i,k}^{*}(t)|\le |W_{i,k}^{*0}(t)|\exp(|\mu t|)+(e^{t^2}-1)^{-1/2}\{\exp(|\mu t|)+1\}.
\end{gather}
\end{enumerate}
\item Taking partial derivatives of $L_{n,k}^*(\eta,t)$ with respect to $\eta$, we have
\[
\frac{\partial L_{n,k}(\eta,t)}{\partial \eta}=\sum_{i=1}^{n_k}\frac{W_{i,k}^*(t)}{1+\eta W_{i,k}^*(t)}
~~~\text{and}~~~
\frac{\partial^2 L_{n,k}(\eta,t)}{\partial \eta^2}=\sum_{i=1}^{n_k}\frac{-W_{i,k}^*(t)^2}{\{1+\eta W_{i,k}^*(t)\}^2}.
\]
In the following, we will prove that, with probability tending to 1, there exists a $\widetilde\eta_{n,k}^*(t)$ such that the first-order derivative is 0, while the fact the second-order derivative is always negative guarantee the solution is unique.  We will give a constructive proof.

Consider $\overline\eta_{n,k}^*(t;h_n)=n_k^{-1/2}(1+h_n)S_{i,k}^*(t)/V_{i,k}^*(t)$ for some sequence $h_n=o(\epsilon_{2,n})$.  It holds uniformly over $|t|\in I_{1,n_k}\cup A_{2,n_k}$ that 
\begin{align*}\yestag\label{eq:5-18}
\frac{\partial L_{n,k}(\eta,t)}{\partial \eta}\Big|_{\eta=\overline\eta_{n,k}^*(t;h_n)}
&=\sum_{i=1}^{n_k}\frac{W_{i,k}^*(t)}{1+\overline\eta_{n,k}^*(t;h_n) W_{i,k}^*(t)}\\
(\text{by Lemma~\ref{lem:HS1}(\ref{lem:HS1a})})~~~
&=\sum_{i=1}^{n_k}W_{i,k}^*(t) - \big\{1+o_{\P}(\epsilon_{2,n})\big\}\sum_{i=1}^{n_k}\overline\eta_{n,k}^*(t;h_n) W_{i,k}^*(t)^2\\
&=\sqrt{n_k}S_{n,k}^*(t)
  -n_k\:\!\widehat\eta^*_{n,k}(t)\big\{1+o_{\P}(\epsilon_{2,n})\big\}V_{n,k}^*(t)\\
&=\sqrt{n_k}S_{n,k}^*(t)\big[1-(1+h_n)\big\{1+o_{\P}(\epsilon_{2,n})\big\}\big].
\end{align*}
Therefore, with probability tending to 1, there exists a $h_n$ such that \eqref{eq:5-18} is 0 when evaluate at $\widetilde\eta_{n,k}^*(t)=\overline\eta_{n,k}^*(t;h_n)$.
Accordingly, it holds uniformly over $|t|\in I_{1,n_1}\cup A_{2,n_1}$ that
\[
\sqrt{n_1}\:\!\widehat\eta^*_{n,1}(t)
=\sqrt{n_1}\:\!\widetilde\eta_{n,1}^*(t)_+
=\big\{1+o_{\P}(\epsilon_{2,n})\big\}S_{n,1}^*(t)_{+}.
\]
\item Combining (\ref{lem:HS1a}) and (\ref{lem:HS1b}) yields (\ref{lem:HS1c}). \qedhere
\end{enumerate}
\end{proof}
}}}

The following lemma characterizes the properties of $S_{n,1}^*(\widehat t_{n,1}^*)$ and $S_{n,0}^*(\widehat t_{n,1}^*)$.

\begin{lemma}\label{lem:HS3}
The following statements hold:
\begin{enumerate}[(i)]
\item\label{lem:HS3a} When $|\gamma|>m_1^{-1/2}$, $|\widehat t_{n,1}^*|\in I_{1,n_1}$ with probability tending to 1.
In addition, $S_{n,1}^*(\widehat t_{n,1}^*)-\sqrt{\log_{(2)}n_1} \stackrel{\sf p}{\longrightarrow} +\infty$.
\item\label{lem:HS3b} When $|\gamma|<m_1^{-1/2}$, $|\widehat t_{n,1}^*|\in A_{2,n_1}$ with probability tending to 1,
and moreover, $\widehat t_{n,1}^*$ is asymptotically uniformly distributed over $A_{2,n_1}$.
In addition, 
\[\yestag\label{eq:theoremBCadd}
\lim_{n\to\infty}\P_{H_{1,n}^{\#}}\Big\{\sqrt{\log_{(2)}n_1}\Big(S_{n,1}^{*}(\widehat t_{n,1}^*)-\sqrt{\log_{(2)}n_1}\,\Big)+\log(\sqrt{2}\pi)\leq x\Big\}=\exp\{-\exp(-x)\}, \quad x\in\R.
\]
and $S_{n,0}^{*}(\widehat t_{n,1}^*)\stackrel{\sf d}{\longrightarrow} N(0,1)$. 
\item\label{lem:HS3c}\begin{enumerate}
\item\label{lem:HS3c1} When $|\gamma|=m_1^{-1/2}$ and $|\gamma|\sgn(0+)S_1^{*0}(0+)>0$, where $S_1^{*0}$ is one version such that \eqref{eq:Kolm} holds, it holds that $|\widehat t_{n,1}^*|\in I_{1,n_1}$ with probability tending to 1.
In addition, $S_{n,1}^*(\widehat t_{n,1}^*)-(1-\delta)\sqrt{\log_{(2)}n_1} \stackrel{\sf p}{\longrightarrow} +\infty$ for any constant $\delta>0$.
\item\label{lem:HS3c2} When $|\gamma|=m_1^{-1/2}$ and $|\gamma|\sgn(0+)S_1^{*0}(0+)<0$, it holds that $|\widehat t_{n,1}^*|\in A_{2,n_1}$ with probability tending to 1.
In addition, \eqref{eq:theoremBCadd} holds 
and $S_{n,0}^{*}(\widehat t_{n,1}^*)\stackrel{\sf d}{\longrightarrow} N(0,1)$.
\end{enumerate}
\end{enumerate}
\end{lemma}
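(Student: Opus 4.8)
The plan is to localize $\widehat t_{n,1}^*$ by comparing the profile statistic $\lambda_{n,1}^*(I):=2\sup_{\eta,\,|t|\in I}L_{n,1}^*(\eta,t)$ across the pieces of the partition of $[0,\infty)$ and showing that, with probability tending to one, the maximum is attained on $I_{1,n_1}=[0,t_0]$ or on $A_{2,n_1}$. Running the computation in \eqref{eq:haishi1}--\eqref{eq:haishi2} with $D_1$ in place of $D_0$ (using Lemma~\ref{lem:HS1}(\ref{lem:HS1b})--(\ref{lem:HS1c}) and $V_{n,1}^*=1+o_{\P}(\epsilon_{2,n})$) yields $\lambda_{n,1}^*(I)=\{1+o_{\P}(\epsilon_{2,n})\}\sup_{|t|\in I}S_{n,1}^*(t)_+^2$ for every $I\subseteq I_{1,n_1}\cup A_{2,n_1}$. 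On the complement $[t_0,c_{1,n_1}]\cup[c_{2,n_1},\infty)$ one combines the tail estimates for $S_{n,1}^{*0}$ from Proposition~\ref{prop:theoremBC} and from Lemma~1 and pages~64--65 of \citet{MR2058122} with the drift expansion \eqref{eq:HS4-12}: the drift $n_1^{1/2}q_n(e^{t^2}-1)^{-1/2}(e^{\mu_n t}-1)$ equals $\{1+o_{\P}(1)\}m_1^{1/2}\gamma(\log_{(2)}n)^{1/2}g(t)$ with $g(t):=t(e^{t^2}-1)^{-1/2}$, a function decreasing in $|t|$ with $g(0+)=1$ and $\sup_{|t|\ge t_0}g(t)=g(t_0)<1$, so there $\lambda_{n,1}^*\le\{m_1\gamma^2g(t_0)^2+o_{\P}(1)\}\log_{(2)}n$. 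On the two active intervals, \eqref{eq:HS4-18} together with $|g|\le1$ and $\sup_{|t|\le t_0}|S_{n,1}^{*0}(t)|=O_{\P}(1)$ (from \eqref{eq:Kolm}) gives $\lambda_{n,1}^*(I_{1,n_1})=m_1\gamma^2\log_{(2)}n\,(1+o_{\P}(1))$, while \eqref{eq:HS4-18add} and \eqref{eq:theoremBCsame} give $\lambda_{n,1}^*(A_{2,n_1})=\log_{(2)}n\,(1+o_{\P}(1))$.

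For (\ref{lem:HS3a}): since $m_1\gamma^2>1$ and $g(t_0)<1$, the term $\lambda_{n,1}^*(I_{1,n_1})$ is strictly the largest with probability tending to one, so $|\widehat t_{n,1}^*|\in I_{1,n_1}$; plugging $\widehat t_{n,1}^*$ into the quadratic approximation gives $S_{n,1}^*(\widehat t_{n,1}^*)_+^2=\{1+o_{\P}(\epsilon_{2,n})\}\lambda_{n,1}^*(I_{1,n_1})=m_1\gamma^2\log_{(2)}n\,(1+o_{\P}(1))$, hence $S_{n,1}^*(\widehat t_{n,1}^*)-\sqrt{\log_{(2)}n_1}\ge(\sqrt{m_1}\,|\gamma|-1)\sqrt{\log_{(2)}n}\,(1+o_{\P}(1))\to+\infty$. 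For (\ref{lem:HS3b}): since $m_1\gamma^2<1$, the comparison reverses, so $|\widehat t_{n,1}^*|\in A_{2,n_1}$ with probability tending to one; asymptotic uniformity over $A_{2,n_1}$ is obtained verbatim from the proof of Lemma~\ref{lem:LS3}(\ref{lem:LS3d}) applied to the limiting Gaussian process on $A_{2,n_1}$ (the drift being $o_{\P}(\epsilon_{2,n})$ there and $(e^{t^2}-1)^{-1/2}$ differing negligibly from $e^{-t^2/2}$), and \eqref{eq:theoremBCadd} follows from the sandwich $S_{n,1}^*(\widehat t_{n,1}^*)=\sup_{|t|\in A_{2,n_1}}S_{n,1}^*(t)+o_{\P}(\epsilon_{2,n_1}^{1/2})$ (as in Lemma~\ref{lem:LS3}(\ref{lem:LS3b})) together with \eqref{eq:HS4-18add} and \eqref{eq:theoremBCsame}. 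Finally $S_{n,0}^*(\widehat t_{n,1}^*)\stackrel{\sf d}{\longrightarrow}N(0,1)$ is proved exactly as in Lemma~\ref{lem:LS1}: $\widehat t_{n,1}^*$ is $D_1$-measurable, hence independent of $D_0$; the analogue of Lemma~\ref{lem:LS3}(\ref{lem:LS3e}) gives $|\widehat t_{n,1}^*|\in A_{2,n_0}$ with probability tending to one; and uniformly over $A_{2,n_0}$ we have $S_{n,0}^*(t)=\widetilde S_0^*(t)+o_{\P}(\epsilon_{2,n_0}^{1/2})$ for a unit-variance Gaussian process $\widetilde S_0^*$ independent of $\widehat t_{n,1}^*$, so conditioning on $D_1$ and integrating gives the standard normal limit.

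Case (\ref{lem:HS3c}): here $m_1\gamma^2=1$, so $\lambda_{n,1}^*(I_{1,n_1})$ and $\lambda_{n,1}^*(A_{2,n_1})$ agree at first order and the winner is decided at order $(\log_{(2)}n)^{1/2}$. Since $1-g(t)\asymp t^2/4$ near the origin, the penalty $(\log_{(2)}n)^{1/2}(1-g(t))$ in \eqref{eq:HS4-18} localizes the maximizer of $t\mapsto S_{n,1}^*(t)$ over $[0,t_0]$ (respectively over $[-t_0,0]$) to scale $|t|\asymp(\log_{(2)}n)^{-1/2}$, which against the $O_{\P}(1)$ fluctuations of $S_{n,1}^{*0}$ leaves $\sup_{|t|\in I_{1,n_1}}S_{n,1}^*(t)_+=(\log_{(2)}n)^{1/2}+\sgn(\gamma)S_{n,1}^{*0}(0+)+o_{\P}(\epsilon_{2,n}^{1/2})$ (the drift pushes $S_{n,1}^*$ up near $0+$ when $\gamma>0$ and near $0-$ when $\gamma<0$, and $W_{i,1}^*(0-)=-W_{i,1}^*(0+)$). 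Hence $\lambda_{n,1}^*(I_{1,n_1})=\log_{(2)}n+2\sgn(\gamma)(\log_{(2)}n)^{1/2}S_{n,1}^{*0}(0+)+O_{\P}(1)$, whereas $\lambda_{n,1}^*(A_{2,n_1})=\log_{(2)}n+O_{\P}(1)$ with an asymptotically Gumbel $O_{\P}(1)$ term by \eqref{eq:theoremBCsame}. By the Arzelà--Ascoli theorem in probability \citep{MR1113551} (cf.\ \eqref{eq:Kolm}), $S_{n,1}^{*0}(0+)\stackrel{\sf d}{\longrightarrow}S_1^{*0}(0+)\sim N(0,1)$, so $(\log_{(2)}n)^{1/2}|S_{n,1}^{*0}(0+)|\stackrel{\sf p}{\longrightarrow}+\infty$ and, with probability tending to one, $\lambda_{n,1}^*(I_{1,n_1})-\lambda_{n,1}^*(A_{2,n_1})$ has the same sign as $\sgn(\gamma)S_1^{*0}(0+)$. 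On the event $\{|\gamma|\sgn(0+)S_1^{*0}(0+)>0\}$, of probability $1/2$, this difference $\to+\infty$, so $|\widehat t_{n,1}^*|\in I_{1,n_1}$ and, arguing as in (\ref{lem:HS3a}), $S_{n,1}^*(\widehat t_{n,1}^*)=(\log_{(2)}n)^{1/2}(1+o_{\P}(1))$, whence $S_{n,1}^*(\widehat t_{n,1}^*)-(1-\delta)\sqrt{\log_{(2)}n_1}\to+\infty$ for every fixed $\delta>0$; on the complementary event it $\to-\infty$, so $|\widehat t_{n,1}^*|\in A_{2,n_1}$ and \eqref{eq:theoremBCadd} together with $S_{n,0}^*(\widehat t_{n,1}^*)\stackrel{\sf d}{\longrightarrow}N(0,1)$ follow as in (\ref{lem:HS3b}).

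The delicate step is the boundary case (\ref{lem:HS3c}): one must upgrade the first-order identity $\lambda_{n,1}^*(I_{1,n_1})\approx\lambda_{n,1}^*(A_{2,n_1})\approx\log_{(2)}n$ to a comparison at order $(\log_{(2)}n)^{1/2}$. This requires, on the one hand, showing that after the $(\log_{(2)}n)^{1/2}$-scale quadratic drift penalty only the $O_{\P}(1)$ value $S_{n,1}^{*0}(0+)$ of the null process at the origin survives --- hence genuinely using tightness of $S_{n,1}^{*0}$ on $I_{1,n_1}$ and the jump of $W^*$ at $0$, which makes the relevant limit exactly $S_1^{*0}(0+)\sim N(0,1)$ --- and, on the other hand, checking that the $O_{\P}(1)$ Gumbel fluctuation of $\sup_{|t|\in A_{2,n_1}}S_{n,1}^*$ is asymptotically swamped by $(\log_{(2)}n)^{1/2}|S_{n,1}^{*0}(0+)|$, so that the sign of the comparison, and the exact $1/2$--$1/2$ split, is dictated solely by $\sgn(\gamma)S_1^{*0}(0+)$.
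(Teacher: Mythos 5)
Your proposal follows essentially the same localization-and-comparison strategy as the paper: quadratic approximation of $\lambda_{n,1}^*$ on $I_{1,n_1}\cup A_{2,n_1}$ via Lemma~\ref{lem:HS1}, tail control on the remaining pieces via Lemma~\ref{lem:HS1-add} and $t_0\to\infty$, then reading off the winner from $m_1\gamma^2$ versus $1$, exactly matching \eqref{eq:HS4-20}--\eqref{eq:HS4-21remain}. You go further than the paper in case~(\ref{lem:HS3c}), where the paper only says the proof is ``similar'' and omits it; your second-order expansion $\lambda_{n,1}^*(I_{1,n_1})=\log_{(2)}n+2\sgn(\gamma)(\log_{(2)}n)^{1/2}S^{*0}_{n,1}(0+)+O_{\P}(1)$ against $\lambda_{n,1}^*(A_{2,n_1})=\log_{(2)}n+O_{\P}(1)$ is the right calculation and makes explicit why the event has probability $1/2$, and your form of the deciding sign, $\sgn(\gamma)S_1^{*0}(0+)$, is the transparent reading of the paper's $|\gamma|\sgn(0+)S_1^{*0}(0+)$ (which, if $\sgn(0+)$ is read literally as $+1$, loses the $\gamma<0$ branch; interpreting $\sgn(0+)$ as $\sgn(\mu_n)=\sgn(\gamma)$ reconciles the two). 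Two small imprecisions worth flagging: on $[t_0,c_{1,n_1}]$ the quadratic approximation constant is $V_{n,1}^*(t)^{-1}$ which is only $O_{\P}(1)$ (Lemma~\ref{lem:HS1-add}(\ref{lem:HS1a-add}) gives $V_{n,1}^*\ge c$, not $V_{n,1}^*\to1$), so your bound $\lambda_{n,1}^*\le\{m_1\gamma^2g(t_0)^2+o_{\P}(1)\}\log_{(2)}n$ should carry an extra $O_{\P}(1)$ factor --- harmless after letting $t_0\to\infty$, and this is exactly what the paper's $\delta(t_0)$ in \eqref{eq:HS4-21remain} absorbs; and the $o_{\P}(\epsilon_{2,n}^{1/2})$ error you claim for $\sup_{|t|\in I_{1,n_1}}S_{n,1}^*(t)_+$ is stronger than what the Hölder modulus of $S^{*0}_1$ at scale $(\log_{(2)}n)^{-1/2}$ readily delivers (one gets $O_{\P}(\epsilon_{2,n}^{1/2})$), but the weaker bound already suffices for the $\pm\infty$ comparison you invoke.
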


\begin{proof}[Proof of Lemma~\ref{lem:HS3}]
We first state that the distribution of $2L_{n,1}^*(\widehat\eta^*_{n,1}(t), t)$ under $H_{1,n}^{\#}$ depends on the constant $|\gamma|$ when $|t|\in I_{1,n_1}$ (i.e., $|t|\le t_0$), but exhibits the same asymptotic distribution under $H_{1,n}^{\#}$ as under $H_0$ (in other words, it is asymptotically irrelevant of $|\gamma|$) when $|t|>t_0$ as $t_0\to\infty$.
In detail, it holds that
\begin{equation}\label{eq:HS4-20}
\sup_{|t|\in I_{1,n_1}}
2L_{n,1}^*(\widehat\eta^*_{n,1}(t), t) - \log_{(2)} n_1
\stackrel{\sf p}{\longrightarrow}
\begin{cases}
 +\infty,          &  |\gamma|>m_1^{-1/2}, \\
 -\infty,          &  |\gamma|<m_1^{-1/2}, \\
 +\infty,          &  |\gamma|=m_1^{-1/2}~\text{and}~|\gamma|\sgn(0+)S_1^{*0}(0+)>0, \\
 -\infty,          &  |\gamma|=m_1^{-1/2}~\text{and}~|\gamma|\sgn(0+)S_1^{*0}(0+)<0. \\
\end{cases}
\end{equation}
On the other hand, for $|t|>t_0$, whether $|\gamma|>m_1^{-1/2}$ or not, it holds that  
\begin{equation}\label{eq:HS4-21}
\lim_{t_0\to\infty}\limsup_{n\to\infty}\Big|\P_{H_{1,n}^{\#}}\Big\{
\sup_{|t|>t_0}
2L_{n,1}^*(\widehat\eta^*_{n,1}(t), t)-\log_{(2)} n_1+\log(2\pi^2)\le x\Big\}
-\exp\{-\exp(-x/2)\}
\Big|=0.
\end{equation}

The proof of \eqref{eq:HS4-20} and \eqref{eq:HS4-21} can be divided into the following steps.

\begin{itemize}
\item $|t|\in I_{1,n_1}$ and $|t|\in A_{2,n_1}$: We have, uniformly over $|t|\in I_{1,n_1}\cup A_{2,n_1}$, by Lemma~\ref{lem:HS1},
\begin{align*}\yestag\label{HS4-17}
  2L_{n,1}^*(\widehat\eta^*_{n,1}(t), t)
&=2\sum_{i=1}^{n_1}\log\Big\{1+\widehat\eta^*_{n,1}(t) W_{i,1}^*(t)\Big\}\\[-1.5mm]
(\text{by Lemma~\ref{lem:HS1}(\ref{lem:HS1c})})~~~
&=2\widehat\eta^*_{n,1}(t)\sum_{i=1}^{n_1}W_{i,1}^*(t)
  -\widehat\eta^*_{n,1}(t)^2\big\{1+o_{\P}(\epsilon_{2,n})\big\}\sum_{i=1}^{n_1}W_{i,1}^*(t)^2\\
&=2\sqrt{n_1}\:\!\widehat\eta^*_{n,1}(t)S_{n,1}^*(t)
  -n_1\:\!\widehat\eta^*_{n,1}(t)^2\big\{1+o_{\P}(\epsilon_{2,n})\big\}V_{n,1}^*(t)\\
(\text{by Lemma~\ref{lem:HS1}(\ref{lem:HS1a})})~~~
&=2\sqrt{n_1}\:\!\widehat\eta^*_{n,1}(t)S_{n,1}^*(t)
  -n_1\:\!\widehat\eta^*_{n,1}(t)^2\big\{1+o_{\P}(\epsilon_{2,n})\big\}\\
(\text{by Lemma~\ref{lem:HS1}(\ref{lem:HS1b})})~~~
&=\big\{1+o_{\P}(\epsilon_{2,n})\big\}\Big\{S_{n,1}^*(t)_{+}\Big\}^2.
\end{align*}

\begin{itemize}
\item $|t|\in I_{1,n_1}$: Plugging Equation~\eqref{eq:HS4-18} into~\eqref{HS4-17} --- noticing that 
\begin{align*}\yestag\label{eq:tbounded}
&\text{$t(e^{t^2}-1)^{-1/2}$ approaches the maximum $1$ as $t\to 0+$ and
                                 the minimum $-1$ as $t\to 0-$, and} \\ 
&\text{$t^2(e^{t^2}-1)^{-1/2}$ is bounded by 1}
\end{align*}
 --- yields \eqref{eq:HS4-20}.

\item $|t|\in A_{2,n_1}$: 
Combining \eqref{HS4-17}, \eqref{eq:HS4-18add} and~\eqref{eq:theoremBCsame} proves
\[\yestag\label{eq:HS4-21near}
\lim_{n\to\infty}\P_{H_{1,n}^{\#}}\Big\{
\sup_{|t|\in A_{2,n_1}}
2L_{n,1}^*(\widehat\eta^*_{n,1}(t), t)-\log_{(2)} n_1+\log(2\pi^2)\le x\Big\}
=\exp\{-\exp(-x/2)\}.
\]
\end{itemize}
\end{itemize}
{{{
To prove \eqref{eq:HS4-21}, in view of \eqref{eq:HS4-21near}, it remains to show that with probability tending to 1, 
\[\yestag\label{eq:HS4-21remain}
0 \le \sup_{|t|\in A_{1,n_1}\cup A_{3,n_1}\cup I_{3,n_1}\cup I_{4,n_1}}2L_{n,1}^*(\widehat\eta^*_{n,1}(t), t) \le \delta(t_0)\Big(\log_{(2)}n_1\Big),~~~\text{with $\delta(t_0)\to 0$ as $t_0\to\infty$.}
\]

\begin{itemize}
\item $|t|\in A_{1,n_1}$ and $|t|\in A_{3,n_1}$: We have, uniformly over $|t|\in A_{1,n_1}\cup A_{3,n_1}$, by Lemma~\ref{lem:HS1-add},
\begin{align*}\yestag\label{HS4-28}
  2L_{n,1}^*(\widehat\eta^*_{n,1}(t), t)
&=2\sum_{i=1}^{n_1}\log\Big\{1+\widehat\eta^*_{n,1}(t) W_{i,1}^*(t)\Big\}\\[-1.5mm]
(\text{by Lemma~\ref{lem:HS1-add}(\ref{lem:HS1c-add})})~~~
&=2\widehat\eta^*_{n,1}(t)\sum_{i=1}^{n_1}W_{i,1}^*(t)
  -\widehat\eta^*_{n,1}(t)^2\big\{1+o_{\P}(1)\big\}\sum_{i=1}^{n_1}W_{i,1}^*(t)^2\\[-1.5mm]
(\text{by Lemma~\ref{lem:HS1-add}(\ref{lem:HS1a-add})
                             and~(\ref{lem:HS1b-add})})~~~
&=O_{\P}\Big(\Big\{S_{n,1}^*(t)_{+}\Big\}^2\Big).
\end{align*}
We also have, by \eqref{eq:theoremBCmore},
\[\yestag\label{eq:theoremBCmore*}
\sup_{|t|\in A_{1,n_1}\cup A_{3,n_1}}S_{n,1}^{*0}(t)=o_\P\Big(\sqrt{\log_{(2)}n}\,\Big),
\]

\begin{itemize}
\item $|t|\in A_{3,n_1}$: We have, uniformly in $|t|\in A_{3,n_1}$, by \eqref{eq:HS4-12},
\begin{align}\label{eq:HS4-18add2}
  S_{n,1}^*(t) 
&=S_{n,1}^{*0}(t) 
 +o_{\P}(\epsilon_{2,n}).
\end{align}
Combining \eqref{HS4-28}, \eqref{eq:HS4-18add2} and \eqref{eq:theoremBCmore*} yields
\[\yestag\label{eq:HS4-21remain1}
\sup_{|t|\in A_{3,n_1}}2L_{n,1}^*(\widehat\eta^*_{n,1}(t), t) = o_\P\Big(\log_{(2)}n_1\Big).
\]

\item $|t|\in A_{1,n_1}$: We have, uniformly in $|t|\in A_{1,n_1}$, by \eqref{eq:HS4-12} again (see also \eqref{eq:HS4-18}),
\begin{align}\label{eq:HS4-18add3}
  S_{n,1}^*(t) 
&=S_{n,1}^{*0}(t) 
 +m_k^{1/2}\gamma(\log_{(2)}n)^{1/2} t(e^{t^2}-1)^{-1/2}\Big(1+\frac12\mu_n t\Big)+o_{\P}(\epsilon_{2,n})
\end{align}
Combining \eqref{HS4-28}, \eqref{eq:HS4-18add3} and \eqref{eq:theoremBCmore*} yields that with probability tending to 1, 
\[\yestag\label{eq:HS4-21remain2}
0 \le \sup_{|t|\in A_{1,n_1}}2L_{n,1}^*(\widehat\eta^*_{n,1}(t), t) \le \delta(t_0)\Big(\log_{(2)}n_1\Big),~~~\text{with $\delta(t_0)\to 0$ as $t_0\to\infty$.}
\]
\end{itemize}

\item $|t|\in I_{3,n_1}$ and $|t|\in I_{4,n_1}$: In this case, we also have 
\[\yestag\label{eq:HS4-21remain3}
\sup_{t\in I_{3,n_1}\cup I_{4,n_1}}2L_{n,1}^*(\widehat\eta^*_{n,1}(t), t) = o_\P\Big(\log_{(2)}n_1\Big).
\]
The proof can be found in \citet[Step~(iv)]{MR2146091}.
\end{itemize}
Combining \eqref{eq:HS4-21remain1}, \eqref{eq:HS4-21remain2} and \eqref{eq:HS4-21remain3} finishes the proof of \eqref{eq:HS4-21remain}.
}}}

\begin{enumerate}[(i)]
\item When $|\gamma|>m_1^{-1/2}$, combining Equations~\eqref{eq:HS4-20} and \eqref{eq:HS4-21} yields that $\widehat t_{n,1}^*\in I_{1,n_1}$ with probability tending to 1. 
Notice $\widehat t_{n,1}^*$ is the one such that
\(
2L_{n,1}^*(\widehat\eta^*_{n,1}(t), t)=\big\{1+o_{\P}(\epsilon_{2,n})\big\}\Big\{S_{n,1}^*(t)_{+}\Big\}^2
\)
is maximized. We have, by \eqref{eq:HS4-18} and \eqref{eq:tbounded}, 
\begin{multline*}
\big\{1+o_{\P}(\epsilon_{2,n})\big\}\Big\{S_{n,1}^*(\widehat t_{n,1}^*)_{+}\Big\}^2
=2L_{n,1}^*(\widehat\eta^*_{n,1}(\widehat t_{n,1}^*), \widehat t_{n,1}^*) \\
\ge 
\begin{cases}
\big\{1+o_{\P}(\epsilon_{2,n})\big\} \Big\{S_{n,1}^{*0}(0+) 
 +m_1^{1/2}\gamma(\log_{(2)}n)^{1/2} +o_{\P}(\epsilon_{2,n})\Big\}^2, & \text{when}~\gamma>m_1^{-1/2}, \\
\big\{1+o_{\P}(\epsilon_{2,n})\big\} \Big\{S_{n,1}^{*0}(0-) 
 -m_1^{1/2}\gamma(\log_{(2)}n)^{1/2} +o_{\P}(\epsilon_{2,n})\Big\}^2, & \text{when}~\gamma<-m_1^{-1/2}.
\end{cases}
\end{multline*}
It follows that $S_{n,1}^*(\widehat t_{n,1}^*)-\sqrt{\log_{(2)}n_1} \stackrel{\sf p}{\longrightarrow} +\infty$.

\item When $|\gamma|<m_1^{-1/2}$, combining Equations~\eqref{eq:HS4-20}, \eqref{eq:HS4-21near} and \eqref{eq:HS4-21remain} yields that $\widehat t_{n,1}^*\in A_{2,n_1}$ with probability tending to 1.  The proof of the asymptotic uniform distribution of $\widehat t_{n,1}^*$ is very similar to that of Lemma~\ref{lem:LS3}(\ref{lem:LS3d}) and hence omitted.

In addition, to establish \eqref{eq:theoremBCadd}, 
first by \eqref{eq:HS4-18add}, $\sup_{|t|\in A_{2,n_1}}S_{n,1}^*(t)=\sup_{|t|\in A_{2,n_1}}S_{n,1}^{*0}(t)+o_{\P}(\epsilon_{2,n})$, and thus has the same asymptotic distribution as \eqref{eq:theoremBCsame}. Then, using
\begin{align*}
&\sup_{|t|\in A_{2,n_1}} S_{n,1}^{*0}(t) \ge S_{n,1}^*(\widehat t_{n,1}^*)~~~\text{and}~~~\\
&\big\{1+o_{\P}(\epsilon_{2,n})\big\}\Big\{S_{n,1}^*(\widehat t_{n,1}^*)_{+}\Big\}^2
=2L_{n,1}^*(\widehat\eta^*_{n,1}(\widehat t_{n,1}^*), \widehat t_{n,1}^*)
\ge \big\{1+o_{\P}(\epsilon_{2,n})\big\}\Big\{\sup_{|t|\in A_{2,n_1}} S_{n,1}^{*0}(t)\Big\}^2,
\end{align*}
we conclude that $S_{n,1}^*(\widehat t_{n,1}^*)=\sup_{|t|\in A_{2,n_1}} S_{n,1}^{*}(t) + o_\P(\epsilon_{2,n_1}^{1/2})$ also has the same asymptotic distribution as \eqref{eq:theoremBCsame}.

Also, the fact $S_{n,0}^{*}(\widehat t_{n,1}^*)\stackrel{\sf d}{\longrightarrow} N(0,1)$ can be established using \eqref{eq:HS4-18add} and similar lines of the Proof of Lemma~\ref{lem:LS1}.

\item 
\begin{enumerate}
\item The proof is similar to the proof of (\ref{lem:HS3a}) and hence omitted.

\item The proof is similar to the proof of (\ref{lem:HS3b}) and hence omitted. \qedhere
\end{enumerate}
\end{enumerate}
\end{proof}

\begin{lemma}\label{lem:HS1-add}
\begin{enumerate}[(i)]
\item\label{lem:HS1a-add} It holds uniformly over $|t|\in A_{1,n_1}\cup A_{3,n_1}$ that with probability tending to 1,
\begin{equation}\label{eq:haiyaoadd}
S_{n,k}^*(t) 
=O_{\P}(\sqrt{n_k} d_{n,k}),~~~
V_{n,k}^*(t) 
\ge c,~~~
d_{n,k}\max_{1\le i\le n_k}|W_{i,k}^*(t)|=o_{\P}(1),
\end{equation}
for $d_{n,k}=(n_k^{-1}\log\log n_k)^{1/2}$.
\item\label{lem:HS1b-add} It holds uniformly over $|t|\in A_{1,n_1}\cup A_{3,n_1}$ that
\[
\sqrt{n_1}\widehat\eta^*_{n,1}(t)=\big\{1+o_{\P}(1)\big\}S_{n,1}^*(t)_{+}/V_{n,1}^*(t).
\]
\item\label{lem:HS1c-add} It holds  
that $\sup_{|t|\in A_{1,n_1}\cup A_{3,n_1}}\max_{1\le i\le n_k} |\widehat\eta_{n,1}^*(t)W_{i,k}^*(t)| = o_{\P}(1)$.
\end{enumerate}
\end{lemma}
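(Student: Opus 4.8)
The plan is to mirror the proof of Lemma~\ref{lem:HS1} almost verbatim, replacing the sharp $o_\P(\epsilon_{2,n})$ estimates that are available on $A_{2,n_1}$ by the coarser $o_\P(1)$ estimates that survive on the two boundary pieces $A_{1,n_1}$ and $A_{3,n_1}$; by symmetry it suffices to argue for $t\in A_{1,n_1}\cup A_{3,n_1}$, and throughout we use freely that $n_0\asymp n_1$ so that $A_{j,n_0}$ and $A_{j,n_1}$, and $d_{n,0}$ and $d_{n,1}$, are interchangeable up to negligible terms. For part~(\ref{lem:HS1a-add}) I would begin from the decompositions \eqref{eq:HS4-12}--\eqref{eq:HS4-13} of \citet{MR2146091} (equivalently their specializations \eqref{eq:HS4-18add3} on $A_{1,n_1}$ and \eqref{eq:HS4-18add2} on $A_{3,n_1}$, already derived inside the proof of Lemma~\ref{lem:HS3}). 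For the first assertion: on $A_{1,n_1}$ the drift term is $m_k^{1/2}\gamma(\log_{(2)}n)^{1/2}\,t(e^{t^2}-1)^{-1/2}(1+\tfrac12\mu_n t+\cdots)$, which is $O((\log_{(2)}n)^{1/2})=O(\sqrt{n_k}\,d_{n,k})$ because $t(e^{t^2}-1)^{-1/2}$ is bounded (cf.\ \eqref{eq:tbounded}); on $A_{3,n_1}$ the drift is even $o_\P(\epsilon_{2,n})$, using $q_n\mu_n=\gamma(n^{-1}\log_{(2)}n)^{1/2}$, $|\mu_n t|=O(1)$, and the fact that $(e^{t^2}-1)^{-1/2}$ has order $\asymp n^{-1/4}$ up to sub-polynomial factors there; and in both cases $S_{n,k}^{*0}(t)=o_\P(\sqrt{\log_{(2)}n})$ uniformly, which is \eqref{eq:theoremBCmore*} (a transcription of \eqref{eq:theoremBCmore} to the starred parametrization, and its $k=0$ analogue), so $S_{n,k}^*(t)=O_\P(\sqrt{n_k}\,d_{n,k})$ follows. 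For the lower bound $V_{n,k}^*(t)\ge c$, \eqref{eq:HS4-13} reduces matters to showing $V_{n,k}^{*0}(t)=1+o_\P(1)$ uniformly; since $\E[W_{i,k}^{*0}(t)^2]=1$ for every $t$, this is a concentration statement, which I would obtain by re-running the Hungarian-embedding/order-statistic-truncation argument from the proof of Lemma~\ref{lem:LS2}(\ref{lem:LS2b}) --- writing $V_{n,k}^{*0}(t)$ as $\Delta_\Phi+\Delta_{\rm Csrg}$ with the $x$-integral truncated at the half-order statistics --- now only requiring the weaker $o_\P(1)$, rather than $o_\P(\epsilon_{2,n})$, control of the remainders. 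Finally, for the third assertion I would bound $\sup_{|t|\in A_{1,n_1}\cup A_{3,n_1}}\max_{1\le i\le n_k}|W_{i,k}^{*0}(t)|$ through $e^{tX_{(n_k),k}-t^2}$ and Theorem~1.5.3 in \citet{MR691492}: on $A_{1,n_1}$ this sup is $n^{o(1)}$ (the exponent is increasing in $t$ there and $c_{1,n_1}X_{(n_k),k}=o(\log n)$), while on $A_{3,n_1}$ it is at most $e^{(X_{(n_k),k}/2)^2}\lesssim n_k^{1/2}(\log n_k)^{-1/4}$, so multiplying by $d_{n,k}=(n_k^{-1}\log_{(2)}n_k)^{1/2}$ yields $o_\P(1)$; the passage from $W_{i,k}^{*0}$ to $W_{i,k}^*$ costs only the bounded factor in \eqref{eq:rightnow}.

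Parts~(\ref{lem:HS1b-add}) and~(\ref{lem:HS1c-add}) then follow from part~(\ref{lem:HS1a-add}) exactly as Lemma~\ref{lem:HS1}(\ref{lem:HS1b})--(\ref{lem:HS1c}) follow from Lemma~\ref{lem:HS1}(\ref{lem:HS1a}). For~(\ref{lem:HS1b-add}), set $\overline\eta_{n,1}^*(t;h_n)=n_1^{-1/2}(1+h_n)S_{n,1}^*(t)/V_{n,1}^*(t)$ with a deterministic $h_n=o(1)$; since $\overline\eta_{n,1}^*(t;h_n)=O_\P(d_{n,1})$ (using $S_{n,1}^*=O_\P(\sqrt{n_1}\,d_{n,1})$ and $V_{n,1}^*\ge c$), we have $\overline\eta_{n,1}^*(t;h_n)\max_i|W_{i,1}^*(t)|=o_\P(1)$, so a Taylor expansion of $\partial L_{n,1}^*/\partial\eta$ at $\overline\eta_{n,1}^*(t;h_n)$ has cubic remainder $o_\P(1)\cdot\overline\eta_{n,1}^*(t;h_n)\,n_1V_{n,1}^*(t)$ and reduces to $\sqrt{n_1}\,S_{n,1}^*(t)\big[1-(1+h_n)\{1+o_\P(1)\}\big]$; choosing $h_n$ to annihilate the bracket produces a stationary point, unique because $\partial^2 L_{n,1}^*/\partial\eta^2<0$, whence $\sqrt{n_1}\,\widehat\eta_{n,1}^*(t)=\sqrt{n_1}\,\overline\eta_{n,1}^*(t;h_n)_{+}=\{1+o_\P(1)\}S_{n,1}^*(t)_{+}/V_{n,1}^*(t)$ uniformly. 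Part~(\ref{lem:HS1c-add}) is then immediate: $\widehat\eta_{n,1}^*(t)=O_\P(d_{n,1})$ by~(\ref{lem:HS1b-add}) together with the first assertion of~(\ref{lem:HS1a-add}), so $\max_i|\widehat\eta_{n,1}^*(t)W_{i,k}^*(t)|=O_\P(d_{n,1}\max_i|W_{i,k}^*(t)|)=o_\P(1)$ by the third assertion of~(\ref{lem:HS1a-add}).

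The main obstacle is the uniform lower bound $V_{n,k}^*(t)\ge c$ near the right endpoint $t=\sqrt{(\log n)/2}$ of $A_{3,n_1}$: there the summands $W_{i,k}^{*0}(t)^2$ are extremely heavy-tailed --- their fourth moment is of order $e^{4t^2}\sim n^2$ --- so a naive variance bound is useless, and one genuinely has to reuse the truncation-at-order-statistics device of Lemma~\ref{lem:LS2}(\ref{lem:LS2b}) to show that a single atypically large increment contributes only $O_\P((\log n)^{-1/2})$ to $V_{n,k}^{*0}(t)$. Everything else is a routine coarsening of computations already carried out in the excerpt and in \citet{MR2058122, MR2146091}.
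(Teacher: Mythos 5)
Your overall plan mirrors the paper's: coarsen the $o_\P(\epsilon_{2,n})$ controls of Lemma~\ref{lem:HS1} to $o_\P(1)$ on the boundary pieces, and your handling of parts~(\ref{lem:HS1b-add}) and~(\ref{lem:HS1c-add}) reproduces the paper's Newton-step construction $\overline\eta_{n,1}^*(t;h_n)$ almost verbatim. The first and third assertions of part~(\ref{lem:HS1a-add}) are also handled in the same way as the paper (via \eqref{eq:HS4-12}--\eqref{eq:HS4-13} and \eqref{eq:theoremBCmore*}, plus order-statistic truncation of $\max_i|W_{i,k}^{*0}(t)|$). For the $S^*$ drift estimates your computations on $A_{1,n_1}$ and $A_{3,n_1}$ are correct.

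The genuine gap is in your treatment of the assertion $V_{n,k}^*(t)\ge c$. First, you begin by claiming this ``reduces matters to showing $V_{n,k}^{*0}(t)=1+o_\P(1)$ uniformly'', which is a strictly stronger statement than the lemma asserts and is in fact false near the right endpoint $t\approx\sqrt{(\log n)/2}$ of $A_{3,n_1}$: there the truncated deterministic integral $\Delta_\Phi = \int_{x_{(1),0}}^{x_{(n_0),0}} w(x,t)^2\,\d\Phi(x)$ loses the tail mass $\Phi(2t-x_{(n_0),0})$, and since $2t\approx\sqrt{2\log n}\approx x_{(n_0),0}$ this tail is $\approx\Phi(0)=1/2$, so $\Delta_\Phi$ is bounded away from $1$, not close to it. You partially self-correct in your closing paragraph by identifying the goal as $V\ge c$, but the argument you sketch there still aims at a near-$1$ conclusion via ``a single atypically large increment contributes only $O_\P((\log n)^{-1/2})$''. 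Second, and more seriously, the $\Delta_\Phi+\Delta_{\rm Csrg}$ decomposition of Lemma~\ref{lem:LS2}(\ref{lem:LS2b}) does not transfer to $A_{3,n_1}$ without a new idea: the bound $|\Delta_{\rm Csrg}|\le O_\P(n^{-1/2}(\log n)^{3/2})\sup_x(e^{-x^2/4+2xt-2t^2}+1)$ has sup $\approx n^{1/2}$ when $(x,t)\approx(\sqrt{2\log n},\sqrt{(\log n)/2})$ (check: $-x^2/4+2xt-2t^2 = -(\log n)/2 + 2\log n - \log n = (\log n)/2$), so the resulting bound on $\Delta_{\rm Csrg}$ is of order $(\log n)^{3/2}$, which is useless. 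The paper therefore does not re-run the $A_{2,n}$ argument on $A_{1,n_1}\cup A_{3,n_1}$; instead it combines \eqref{eq:HS4-13} with Lemma~3(1) and Lemma~3(3) of \citet{MR2058122}, which are the separate (and differently proved) null-hypothesis lower bounds on the corresponding intervals. You would need to either cite these directly, as the paper does, or produce a genuinely different argument (e.g.\ a truncation-and-Chernoff lower tail bound exploiting nonnegativity of $V$) rather than the integration-by-parts route from Lemma~\ref{lem:LS2}(\ref{lem:LS2b}).
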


\begin{proof}
\begin{enumerate}[(i)]
\item Combining \eqref{eq:HS4-12} and~\eqref{eq:theoremBCmore*} proves the first part of \eqref{eq:haiyaoadd}.  
Combining~\eqref{eq:HS4-13} and Lemma~3(1) and (3) in \citet{MR2058122} (notices also that $(e^{t^2}-1)^{-1/2}$ differs from $e^{-t^2/2}$ at most by a universal constant, uniformly for $|t|\in A_{1,n_1}\cup A_{3,n_1}$) proves the second part of \eqref{eq:haiyaoadd}.
The proof of the last part is similar of the proof when $|t|\in A_{2,n_1}$.

\item The proof is similar to that of Lemma~\ref{lem:HS1}(\ref{lem:HS1b}). 
We will construct a suitable $\widehat\eta^*_{n,1}(t)$.

Consider $\overline\eta_{n,k}^*(t;h_n)=n_k^{-1/2}(1+h_n)S_{i,k}^*(t)/V_{i,k}^*(t)$ for some sequence $h_n=o(1)$.  It holds uniformly over $|t|\in A_{1,n_1}\cup A_{3,n_1}$ that
\begin{align*}\yestag\label{eq:5-18add}
\frac{\partial L_{n,k}(\eta,t)}{\partial \eta}\Big|_{\eta=\overline\eta_{n,k}^*(t;h_n)}
&=\sum_{i=1}^{n_k}\frac{W_{i,k}^*(t)}{1+\overline\eta_{n,k}^*(t;h_n) W_{i,k}^*(t)}\\
(\text{by Lemma~\ref{lem:HS1-add}(\ref{lem:HS1a-add})})~~~
&=\sum_{i=1}^{n_k}W_{i,k}^*(t) - \big\{1+o_{\P}(1)\big\}\sum_{i=1}^{n_k}\overline\eta_{n,k}^*(t;h_n) W_{i,k}^*(t)^2\\
&=\sqrt{n_k}S_{n,k}^*(t)
  -n_k\:\!\widehat\eta^*_{n,k}(t)\big\{1+o_{\P}(1)\big\}V_{n,k}^*(t)\\
&=\sqrt{n_k}S_{n,k}^*(t)\big[1-(1+h_n)\big\{1+o_{\P}(1)\big\}\big].
\end{align*}
Therefore, with probability tending to 1, there exists a $h_n$ such that \eqref{eq:5-18add} is 0 when evaluate at $\widetilde\eta_{n,k}^*(t)=\overline\eta_{n,k}^*(t;h_n)$.
Accordingly, it holds uniformly over $|t|\in A_{1,n_1}\cup A_{3,n_1}$ that
\[
\sqrt{n_1}\:\!\widehat\eta^*_{n,1}(t)
=\sqrt{n_1}\:\!\widetilde\eta_{n,k}^*(t)_+
=\big\{1+o_{\P}(1)\big\}S_{n,1}^*(t)_{+}/V_{n,1}^*(t).
\]
\item Combining (\ref{lem:HS1a-add}) and (\ref{lem:HS1b-add}) yields (\ref{lem:HS1c-add}). \qedhere
\end{enumerate}
\end{proof}

\section{Simulation study}\label{sec:6}

We present numerical experiments on the finite-sample performances of the likelihood ratio test (LRT) and the split likelihood ratio test (SLRT) under Gaussian mixture models. 
The purpose of these experiments is to investigate whether the finite-sample performance aligns with the asymptotic theoretical results.

We first perform simulations under 
the following sequence:
\begin{equation}\label{eq:def_alter2-ju}
p = q_n,~~~
t=\mu_n,~~~
\text{with}~~~q_n \mu_n = \gamma (n^{-1}\log\log n)^{1/2}~~~\text{and}~~~
\mu_n = (\log n)^{-1/2}
\end{equation}
of the model $f_{p,t}(x) = (1-p) \phi(x;0,1)+p \phi(x;t,1)$. 
We consider the following combinations of sample sizes $n$ and mixing proportion $q_n$ and location parameters $\mu_n$:
\smallskip

\begin{enumerate}[(i)]
    \item\label{case:power-a} $n=10^3$, $q_n=\gamma (n^{-1}\log\log n)^{1/2}(\log n)^{1/2}$, $\mu_n = (\log n)^{-1/2}$; \vspace{-2mm}
    \item\label{case:power-b} $n=10^3$, $q_n=1/2$, $\mu_n = 2\gamma (n^{-1}\log\log n)^{1/2}$; \vspace{-2mm}
    \item\label{case:power-c} $n=10^7$, $q_n=\gamma (n^{-1}\log\log n)^{1/2}(\log n)^{1/2}$, $\mu_n = (\log n)^{-1/2}$.
\end{enumerate}
\smallskip
Tables~\ref{tab:power-a}--\ref{tab:power-c} report empirical sizes and powers (rejection frequencies) of the likelihood ratio test and the split likelihood ratio test with $m_0\in\{0.4,0.5,0.6\}$, based on $1000$ simulations with nominal significance level $\alpha=0.05$, and parameter values $\gamma\in\{0,0.5,1,2,4\}$.

In summary, we notice that the split likelihood ratio test is moderately conservative.  
In particular, the split likelihood ratio test with the asymptotic critical point $c_{n,\alpha}^{\rm split}$
is very comparable with the likelihood ratio test with the asymptotic critical point $c_{n,\alpha}$.
As $\gamma$ increases, the power of the split likelihood ratio test grows quickly.  
As $n$ increases, the difference between the powers of likelihood ratio test and split likelihood ratio test also shrinks (compare Tables~\ref{tab:power-a} and~\ref{tab:power-c}).

\smallskip

Now, we move to general Gaussian mixture models, whose theoretical properties lie beyond the scope of our earlier analysis (and also beyond prior literature on LRTs). 
We consider the Gaussian mixture model with two unknown means
$
f_{p,t_1,t_2}(x) = (1-p) \phi(x;t_1,1)+p \phi(x;t_2,1)
$
and consider the homogeneity testing problem 
\begin{equation*}
 H_0: p=0~\text{or}~t_1=t_2~~~\text{against}~~~
 H_1: p\in(0,1), t_1\neq t_2.
\end{equation*}
Although there is no literature available on the asymptotic null distribution of LR statistic, from Theorem 4 in \citet{MR1841402} and the abstract from \citet{10015228522}, we conjecture that the LR statistic and the SLR statistic have the same asymptotic null distributions as in Proposition~\ref{prop:theoremLS} and Theorem~\ref{thm:null}, respectively.

We perform simulations under 
the following sequence:
\begin{equation}\label{eq:def_alter2-ju-add}
p = q_n,~~~
t_1=\mu_{n1},~~~\text{and}~~~
t_2=\mu_{n2}
\end{equation}
of the model $f_{p,t}(x) = (1-p) \phi(x;t_1,1)+p \phi(x;t_2,1)$. 
We consider the following combinations of sample sizes $n$ and mixing proportion $q_n$ and location parameters $\mu_n$:
\smallskip
\begin{enumerate}
    \item[(iv)]\label{case:power-d} $n=10^3$, $q_n=1/2$, $\mu_{n1} = -4\gamma (n^{-1}\log\log n)^{1/2}$, $\mu_{n2} = 4\gamma (n^{-1}\log\log n)^{1/2}$; \vspace{-2mm}
    \item[(v)]\label{case:power-e} $n=10^3$, $q_n=1/2$, $\mu_{n1} = -3\gamma (n^{-1}\log\log n)^{1/2}$, $\mu_{n2} = 5\gamma (n^{-1}\log\log n)^{1/2}$.
\end{enumerate}
\smallskip

{
\renewcommand{\tabcolsep}{0pt}
\renewcommand{\arraystretch}{1.10}
\begin{table}[!htb]
\centering
\caption{Empirical sizes and powers, under the case \eqref{case:power-a}, of the likelihood ratio test (LRT) and the split likelihood ratio test (SLRT) with $m_0\in\{0.4,0.5,0.6\}$. 
The powers of the LRT with the asymptotic critical point $c_{n,\alpha}$ defined in~\eqref{eq:def_c} and 
the powers of the SLRT with the universal critical point $-2\log\alpha$ are reported.
The powers of the SLRT with the asymptotic critical point $c_{n,\alpha}^{\rm split}$ defined in~\eqref{eq:def_csplit} are reported in parentheses. 
The analysis is based on 1,000 replications of a sample with size $n=10^3$, and parameter values $\gamma\in\{0,0.5,1,2,4\}$.\smallskip}
\label{tab:power-a}{
{
\begin{tabular}{C{1.2in}C{1in}C{1in}C{1in}C{1in}C{1in}}
\hline
                 & $\gamma=0$    & $\gamma=0.5$  & $\gamma=1$    & $\gamma=2$    & $\gamma=4$     \\
\hline
 LRT             & 0.055         & 0.115         & 0.324         & 0.779         & 0.998          \\
 SLRT, $m_0=0.4$ & 0.001 (0.026) & 0.005 (0.055) & 0.016 (0.143) & 0.162 (0.462) & 0.767 (0.891)  \\
 SLRT, $m_0=0.5$ & 0.002 (0.026) & 0.007 (0.056) & 0.024 (0.164) & 0.218 (0.504) & 0.847 (0.913)  \\
 SLRT, $m_0=0.6$ & 0.002 (0.030) & 0.009 (0.054) & 0.030 (0.155) & 0.256 (0.488) & 0.877 (0.930)  \\
\hline
\end{tabular}}}
\end{table}
}

{
\renewcommand{\tabcolsep}{0pt}
\renewcommand{\arraystretch}{1.10}
\begin{table}[!htb]\vspace{1cm}
\centering
\caption{Empirical sizes and powers, under the case \eqref{case:power-b}, of the likelihood ratio test (LRT) and the split likelihood ratio test (SLRT) with $m_0\in\{0.4,0.5,0.6\}$. 
The powers of the LRT with the asymptotic critical point $c_{n,\alpha}$ defined in~\eqref{eq:def_c} and 
the powers of the SLRT with the universal critical point $-2\log\alpha$ are reported.
The powers of the SLRT with the asymptotic critical point $c_{n,\alpha}^{\rm split}$ defined in~\eqref{eq:def_csplit} are reported in parentheses. 
The analysis is based on 1,000 replications of a sample with size $n=10^3$, and parameter values $\gamma\in\{0,0.5,1,2,4\}$.\smallskip}
\label{tab:power-b}{
{
\begin{tabular}{C{1.2in}C{1in}C{1in}C{1in}C{1in}C{1in}}
\hline
                 & $\gamma=0$    & $\gamma=0.5$  & $\gamma=1$    & $\gamma=2$    & $\gamma=4$     \\
\hline
 LRT             & 0.057         & 0.091         & 0.243         & 0.749         & 0.998          \\
 SLRT, $m_0=0.4$ & 0.003 (0.031) & 0.007 (0.043) & 0.015 (0.133) & 0.159 (0.464) & 0.772 (0.889)  \\
 SLRT, $m_0=0.5$ & 0.000 (0.021) & 0.002 (0.033) & 0.019 (0.133) & 0.193 (0.454) & 0.838 (0.924)  \\
 SLRT, $m_0=0.6$ & 0.001 (0.022) & 0.002 (0.046) & 0.037 (0.154) & 0.243 (0.508) & 0.885 (0.937)  \\
\hline
\end{tabular}}}
\end{table}
}

{
\renewcommand{\tabcolsep}{0pt}
\renewcommand{\arraystretch}{1.10}
\begin{table}[!htb]\vspace{1cm}
\centering
\caption{Empirical sizes and powers, under the case \eqref{case:power-c}, of the likelihood ratio test (LRT) and the split likelihood ratio test (SLRT) with $m_0\in\{0.4,0.5,0.6\}$. 
The powers of the LRT with the asymptotic critical point $c_{n,\alpha}$ defined in~\eqref{eq:def_c} and 
the powers of the SLRT with the universal critical point $-2\log\alpha$ are reported.
The powers of the SLRT with the asymptotic critical point $c_{n,\alpha}^{\rm split}$ defined in~\eqref{eq:def_csplit} are reported in parentheses. 
The analysis is based on 1,000 replications of a sample with size $n=10^7$, and parameter values $\gamma\in\{0,0.5,1,2,4\}$.\smallskip}
\label{tab:power-c}{
{
\begin{tabular}{C{1.2in}C{1in}C{1in}C{1in}C{1in}C{1in}}
\hline
                 & $\gamma=0$    & $\gamma=0.5$  & $\gamma=1$    & $\gamma=2$    & $\gamma=4$     \\
\hline
 LRT             & 0.025         & 0.060         & 0.222         & 0.835         & 1.000          \\
 SLRT, $m_0=0.4$ & 0.000 (0.014) & 0.006 (0.048) & 0.024 (0.179) & 0.305 (0.633) & 0.918 (0.960)  \\
 SLRT, $m_0=0.5$ & 0.001 (0.016) & 0.010 (0.068) & 0.035 (0.188) & 0.375 (0.656) & 0.945 (0.975)  \\
 SLRT, $m_0=0.6$ & 0.003 (0.022) & 0.013 (0.067) & 0.046 (0.225) & 0.403 (0.679) & 0.955 (0.978)  \\
\hline
\end{tabular}}}
\end{table}
}

Inspecting Tables~\ref{tab:power-d}--\ref{tab:power-e}, there seems to be no strong evidence against our conjecture. Again, the split likelihood ratio test is not all too conservative, and the split likelihood ratio test with the asymptotic critical value $c_{n,\alpha}^{\text{split}}$ is comparable to the LRT with the asymptotic critical value~$c_{n,\alpha}$.

{
\renewcommand{\tabcolsep}{0pt}
\renewcommand{\arraystretch}{1.10}
\begin{table}[!htb]
\centering
\caption{Empirical sizes and powers, under the case {\rm ({\color{blue}iv})}, of the likelihood ratio test (LRT) and the split likelihood ratio test (SLRT) with $m_0\in\{0.4,0.5,0.6\}$. 
The powers of the LRT with the asymptotic critical point $c_{n,\alpha}$ defined in~\eqref{eq:def_c} and 
the powers of the SLRT with the universal critical point $-2\log\alpha$ are reported.
The powers of the SLRT with the asymptotic critical point $c_{n,\alpha}^{\rm split}$ defined in~\eqref{eq:def_csplit} are reported in parentheses. 
The analysis is based on 1,000 replications of a sample with size $n=10^3$, and parameter values $\gamma\in\{0,0.5,1,2,4\}$.\smallskip}
\label{tab:power-d}{
{
\begin{tabular}{C{1.2in}C{1in}C{1in}C{1in}C{1in}C{1in}}
\hline
                 & $\gamma=0$    & $\gamma=0.5$  & $\gamma=1$    & $\gamma=2$    & $\gamma=4$     \\
\hline
 LRT             & 0.046         & 0.054         & 0.107         & 0.749         & 1.000          \\
 SLRT, $m_0=0.4$ & 0.000 (0.006) & 0.001 (0.023) & 0.004 (0.064) & 0.102 (0.392) & 0.986 (0.994)  \\
 SLRT, $m_0=0.5$ & 0.001 (0.015) & 0.002 (0.014) & 0.002 (0.032) & 0.172 (0.411) & 0.992 (0.993)  \\
 SLRT, $m_0=0.6$ & 0.004 (0.016) & 0.001 (0.019) & 0.009 (0.052) & 0.209 (0.442) & 0.995 (0.996)  \\
\hline
\end{tabular}}}
\end{table}
}

{
\renewcommand{\tabcolsep}{0pt}
\renewcommand{\arraystretch}{1.10}
\begin{table}[!htb]
\centering
\caption{Empirical sizes and powers, under the case {\rm ({\color{blue}v})}, of the likelihood ratio test (LRT) and the split likelihood ratio test (SLRT) with $m_0\in\{0.4,0.5,0.6\}$. 
The powers of the LRT with the asymptotic critical point $c_{n,\alpha}$ defined in~\eqref{eq:def_c} and 
the powers of the SLRT with the universal critical point $-2\log\alpha$ are reported.
The powers of the SLRT with the asymptotic critical point $c_{n,\alpha}^{\rm split}$ defined in~\eqref{eq:def_csplit} are reported in parentheses. 
The analysis is based on 1,000 replications of a sample with size $n=10^3$, and parameter values $\gamma\in\{0,0.5,1,2,4\}$.\smallskip}
\label{tab:power-e}{
{
\begin{tabular}{C{1.2in}C{1in}C{1in}C{1in}C{1in}C{1in}}
\hline
                 & $\gamma=0$    & $\gamma=0.5$  & $\gamma=1$    & $\gamma=2$    & $\gamma=4$     \\
\hline
 LRT             & 0.037         & 0.055         & 0.115         & 0.726         & 0.999          \\
 SLRT, $m_0=0.4$ & 0.000 (0.019) & 0.001 (0.017) & 0.005 (0.039) & 0.154 (0.417) & 0.983 (0.991)  \\
 SLRT, $m_0=0.5$ & 0.001 (0.011) & 0.001 (0.022) & 0.005 (0.040) & 0.175 (0.441) & 0.997 (0.998)  \\
 SLRT, $m_0=0.6$ & 0.001 (0.013) & 0.003 (0.020) & 0.007 (0.056) & 0.193 (0.441) & 0.996 (0.996)  \\
\hline
\end{tabular}}}
\end{table}
}

\section{Discussion}
\label{sec:discussion}

The split likelihood ratio test from universal inference offers a straightforward way to form a valid test for challenging testing problems that lack the regularity conditions typically required by traditional inference methods.  However, prior work exploring the behavior of the test in regular settings notes the conservativeness of the approach \citep{MR4527023, MR4529724}.  In this paper, we consider the split likelihood ratio test as a solution to one of the most challenging irregular parametric testing problems, namely, testing for the number of components of a Gaussian mixture model.  Surprisingly, we find that the split likelihood ratio test can achieve the same detection boundary (up to a constant) as traditional methods based on the ordinary likelihood ratio.  This finding is illustrated through a rigorous asymptotic analysis in the base case of testing homogeneity and further supported by simulation studies of finite-sample performance.

Our theoretical work is focused on the already delicate case of testing homogeneity against a Gaussian mixture in the one-dimensional case---a problem that has received much attention in past research.  It would be interesting to push our developments further to more general settings and prove threshold results analogous to Theorem~\ref{thm:local2} for more general problems, including heteroscedastic cases and testing against mixtures of multiple components, potentially in higher dimensions, in future work.  This also holds the promise of allowing one to obtain critical values that are smaller than the universal threshold and work asymptotically for more general mixture models.  Investigations along these lines are in progress, but out of the scope of the present paper.

Finally, we note that most theoretical results presented in this paper are asymptotic.  The asymptotic results are enlightening about properties of power of the split likelihood ratio test, but they do not lend themselves to deriving optimized choices of the optimal splitting ratio $m_0/m_1$, as the convergence rate depends on $m_0$ ($m_1$).  While optimizing the splitting ratio would be very interesting, it would require a non-asymptotic analysis that can only be obtained via completely different mathematical techniques.

\section*{Funding}
This project has received funding from the European Research Council (ERC) under the European Union's Horizon 2020 research and innovation programme (grant agreement No 883818).

\bibliographystyle{apalike-three}
\bibliography{AMS}

\end{document}